\theoremstyle{definition} 
 \newtheorem{definition}{Definition}[section]
  \newtheorem{definitions}{Definitions}[section]
 \newtheorem{remark}[definition]{Remark}
 \newtheorem{remarks}[definition]{Remarks}
 \newtheorem{example}[definition]{Example}
\theoremstyle{plain}      
 \newtheorem{proposition}[definition]{Proposition}
 \newtheorem{theorem}[definition]{Theorem}
 \newtheorem{corollary}[definition]{Corollary}
 \newtheorem{lemma}[definition]{Lemma}
\newcommand{\dist}{\operatorname{dist}}
\renewcommand{\r}{\mathbb{R}}
\title{From Funk to Hilbert Geometry}  
\author{Athanase Papadopoulos\thanks{The first author is partially supported by  the French ANR project FINSLER} and Marc Troyanov}
\address{
 Institut de Recherche Math\'ematique Avanc\'ee,
 \\
Universit{\'e} de Strasbourg and CNRS,
\\
7 rue Ren\'e Descartes,
\\
 67084 Strasbourg Cedex, France.
\\
email:\,\tt{papadop@math.unistra.fr}
\\[4pt]
\'Ecole Polytechnique F\'ed\'erale de Lausanne, 
\\ Section de Math\'ematiques (B\^atiment MA) Station 8, 
\\
CH-1015 Lausanne, Switzerland
\\
email:\,\tt{marc.troyanov@epfl.ch}
}
\begin{document}

\maketitle

\begin{abstract} 
We survey some basic geometric properties of the Funk metric of a convex set in $\mathbb{R}^n$. 
In particular, we study its geodesics, its topology, its metric balls, its convexity properties, its perpendicularity theory and its isometries. The Hilbert metric is a symmetrization of the Funk metric, and we show some properties of the Hilbert metric that follow directly from the properties we prove for the Funk metric.
\end{abstract}
  
\begin{classification}
 AMS cassification  51F99, 53C70, 53C22, 53A20, 53A35 
\end{classification}

\begin{keywords}
  Funk metric, convexity, Hilbert metric, Busemann's methods.
\end{keywords}

\tableofcontents

 \section{Introduction}
 
The Funk metric associated to an open convex subset of a Euclidean space is a \emph{weak}\index{metric!weak}\index{weak metric} metric in the sense that it does not satisfy all the axioms of a metric: it is not symmetric, and we shall also allow the distance between two points to be zero; see Chapter 1 in this volume \cite{PT-Minkowski}, where such metrics are introdced. Weak metrics often occur in the calculus of variations and in Finsler geometry and the study of such metrics has been revived recently in low-dimensional topology and geometry by Thurston who introduced an asymmetric metric on Teichm\"uller space,\index{metric!Thurston}\index{Thurston metric} which became the subject of intense research. In this paper, we shall sometimes use the expression ``metric" instead of ``weak metric" in order to simplify.
 
 The Funk weak metric $F_{\Omega}(x,y)$ is associated to an open convex subset $\Omega$ of a Euclidean space $\mathbb{R}^n$. It is important an important metric for the subject treated in this handbook, because
the Hilbert metric $H_{\Omega}$ of the convex set $\Omega$ is the \emph{arithmetic symmetrization}\index{arithmetic symmetrization!metric}\index{metric!arithmetic symmetrization} of its  
 Funk metric. More precisely, for any $x$ and $y$ in $\Omega$, we have
\[
 H_{\Omega}(x,y)= \frac{1}{2}\left( F_{\Omega}(x,y)+ F_{\Omega}(y,x)
\right).
\]

Independently of its relation with the Hilbert metric, the Funk metric is a nice example of a weak metric, and there are many natural questions for a given convex subset $\Omega$ of  $\mathbb{R}^n$ that one can ask and solve for such a metric, regarding its geodesics, its balls, its isometries, its boundary structure, and so on. Of course, the answer depends on the shape of the convex set $\Omega$, and it is an interesting aspect of the theory to study  the influence of the properties of the boundary $\partial \Omega$ (its degree of smoothness, the fact that it is a polyhedron, a strictly convex hypersurface, etc.) on the Funk geometry of this set. The same questions can be asked for the Hilbert geometry, and they are addressed in several chapters of this volume,   although the name ``Funk metric" is not used there, but it is used by Busemann in his later papers and books, see e.g. \cite{Busemann-homothetic}, and in the memoir \cite{Zaustinsky} by Zaustinsky (who was a student of  Busemann). 

We studied some aspects of this metric in \cite{PT2}, following Busemann's ideas. But a systematic study of this metric is something which seems to be still missing in the literature, and the aim of this paper is somehow to fill this gap. 
 
 Euclidean segments in $\Omega$ are geodesics for the Funk metrics, and in the case where the domain $\Omega$ is strictly convex (that is, if there is no nonempty open Euclidean segment in $\partial \Omega$), the Euclidean segments are the unique geodesic segments. For a metric $d$ on a subset of Euclidean space, the property of having the Euclidean segments $d$-geodesics is the subject of Hilbert's Problem IV (see Chapter 15 in this volume  \cite{Pap}). One version of this problem asks for a characterization of non-symmetric metrics on subsets of $\mathbb{R}^n$ for which the Euclidean segments are geodesics. 
 
The Funk metric is also a basic example of a Finsler structure,  perhaps one of the  most basic one. In the paper \cite{PT2}, we introduced the notions of  \emph{tautological}\index{tautological!Finsler structure}\index{Finsler structure!tautological} and of {\it reversible tautological}\index{reversible tautological!Finsler structure}\index{Finsler structure!tautological} Finsler structure of the domain $\Omega$. The Funk metric $F_{\Omega}$ is the length metric induced by the tautological weak Finsler structure, and the Hilbert metric $H_{\Omega}$ is the length metric induced by the {\it reversible tautological} Finsler structure of the domain $\Omega$. 
This is in fact how Funk introduced his metric in 1929, see \cite{Funk,TroyanovFinslerian}. 
The reversible Finsler structure is obtained by the process of \emph{harmonic symmetrization} at the level of the convex sets in the tangent spaces that define these structures, cf. \cite{PT3}. This gives another relation between the Hilbert and the Funk metrics. The Finsler geometry of the Funk metric is studied in some details in the Chapter 3 \cite{TroyanovFinslerian} of this volume. A useful  variational description of the Funk metric is studied in \cite{Yamada2014}. There are also
interesting non-Euclidean versions of Funk geometry, see Chapter 13 in this volume \cite{PY}.  

In the present paper, we start by recalling the definition of the Funk metric, we give several basic properties of this metric, some of which are new, or at least formulated in a new way. In \S \ref{sec.ReverseFunk}, we introduce what we call the \emph{reverse Funk metric}, that is, the metric ${}^rF_\Omega$ defined by ${}^rF_\Omega(x,y)=F(y,x)$. This metric is much less studied than the Funk metric. In \S \ref{ex}, we give the formula for the Funk metric for the two main examples, namely, the case of convex polytopes and the Euclidean unit ball. In \S \ref{ref:Balls}, we study the geometry of balls in the Funk metric. Since the metric is non-symmetric, one has to distinguish between \emph{forward} and \emph{backward} balls. We show that any forward ball is the image of $\Omega$ by a Euclidean homothety. This gives a rigidity property, namely, that the local geometry of the convex set determines the convex set up to a scalar factor. In \S \ref{s:balls}, we show that the topologies  defined by the Funk and the reverse Funk metrics coincide with the Euclidean topology. In \S \ref{sec.triangle}, we give a proof of the triangle inequality for the Funk metric and at the same time we study its geodesics and its convexity properties. In \S \ref{nearest}, we study the property of the nearest point projections of a point in $\Omega$ on a convex subset of $\Omega$ equipped with the Funk metric, and the perpendicularity properties. In the case where $\Omega$ is strictly convex, the nearest point projection is unique. There is a formulation of perpendicularity to hyperplanes in $\Omega$ in terms of properties of  support hyperplanes for $\Omega$. In \S \ref{s:inf}, we study the infinitesimal (Finsler) structure associated to a Funk metric. In \S \ref{isom}, we study the isometries of a Funk metric. In the case where $\Omega$ is bounded and strictly convex, its isometry group coincides with the subgroup of affine transformations of $\mathbb{R}^n$ that leave $\Omega$ invariant.  In section \ref{s:p}  we propose a projective viewpoint and a generalization of  the Funk metric and in Section \ref{s:H}, we present some basic facts about  the Hilbert metric which are direct consequences of the fact that it is a symmetrization of the Funk metric. In the last section we give some new perspectives on the Funk metric, some of which are treated in other chapters of this volume.
Appendix A contains two classical theorems of Euclidean geometry (the theorems of Menelaus and Ceva). Menelaus' Theorem is used in Appendix B to give the classical proof of the triangle inequality for  the Funk metric.

\section{The Funk metric}   
In this section, $\Omega$ is a proper\index{proper convex domain}\index{convex domain!proper} convex domain in $\mathbb{R}^n$, that is, $\Omega$ is convex, open, 
non-empty, and $\Omega\neq \r^n$.  We denote by $\overline{\Omega}$ the closure of $\Omega$ in $\r^n $
and by $\partial \Omega = \overline{\Omega} \setminus \Omega$ the topological boundary of $\Omega$. 

In the case of an unbounded domain, it will be convenient to add points at infinity to the boundary   $\partial \Omega$. 
To do so we consider $\r^n$ as an affine space in the projective space $\mathbb{RP}^n$ and we denote
by $H_{\infty} = \mathbb{RP}^n\setminus \r^n$ the hyperplane at infinity.  We then denote by 
 $\widetilde{\Omega}$ the closure of $\Omega$ in $\mathbb{RP}^n$ and by 
 $\tilde{\partial}\Omega = \widetilde{\Omega} \setminus \Omega$. Observe that  ${\partial}\Omega = \tilde{\partial}\Omega \setminus H_{\infty}$.

For any two points $x\neq  y$ in $\mathbb{R}^n$,  we denote by $[x,y]$  the closed  affine segment joining the two points. We also denote by $R(x,y)$ the affine ray starting at $x$ and passing through $y$ and by
 $\tilde R(x,y)$ its closure in  $ \mathbb{RP}^n$. Finally we set 
 $$
  a_{\Omega}(x,y) = \tilde R(x,y)\cap \tilde \partial \Omega \in  \mathbb{RP}^n.
 $$
\begin{figure}[h]
\begin{center}
\begin{picture}(10,70)  
\thicklines 
   \closecurve(0,-10 , 70,30,  20,60)
    \put(9,5){\line(3,5){40}} 
   \put(76,34){$\Omega$ } 
     \put(50,73){\circle*{3}}
     \put(30,40){\circle*{3}}
     \put(9,5){\circle*{3}}         
     \put(45,76){$a$}
     \put(21,38.6){$y$} 
     \put(0,3.4){$x$}    
     \end{picture} 
  \end{center}  
\end{figure}

We now define the Funk metric:  
\begin{definition}[The Funk metric]\label{def:Funk}
 The \emph{Funk metric}\index{metric!Funk}\index{Funk metric} on $\Omega$, denoted by $F_{\Omega}$,  is defined  for $x$ and $y$ in $\Omega$, by $F_{\Omega}(x,x) =0$
 and by
 $$
  F_{\Omega}(x,y) =  \log \left( \frac{\vert x-a \vert}{\vert y-a \vert}\right)
 $$
 if $x\neq y$, where $a=a_{\Omega}(x,y) \in \mathbb{RP}^n$. 
 Here $\vert p-q \vert$ is the Euclidean distance between the points $q$ and $p$ in $\r^n$.
 It is understood in this formula that if $a \in H_{\infty}$, then   $F_{\Omega}(x,y)=0$. This is consistent with the
 convention $\infty/\infty = 1$.  
 \end{definition}

Let us begin with a few basic properties of the Funk metric.

\begin{proposition}\label{prop.basicproperties}
 The Funk metric in a convex domain $\Omega \neq \r^n$ satisfies the following properties:
\begin{enumerate}[{\rm (a)}]  
  \item   $F_{\Omega}(x,y) \geq 0$ and   $F_{\Omega}(x,x) = 0$   for all $x,y \in \Omega$.
  \item $F_{\Omega}(x,z) \leq F_{\Omega}(x,y)+F_{\Omega}(y,z)$   for all $x,y,z \in \Omega$.   
  \item $F_{\Omega}$ is projective,\index{metric!projective}\index{projective metric} that is, $F_{\Omega}(x,z) = F_{\Omega}(x,y)+F_{\Omega}(y,z)$
   whenever $z$ is a point on the affine segment $[x,y]$.   
  \item  The weak metric metric $F_{\Omega}$ is non symmetric, that is, $F_{\Omega}(x,y) \neq  F_{\Omega}(y,x)$
  in general.
  \item  The weak metric  $F_{\Omega}$ is separating, that is,  
  $x\neq y \Rightarrow F_{\Omega}(x,y) >  0 $,  if and only if the domain $\Omega$ is bounded.
  \item  The weak metric metric $F_{\Omega}$ is unbounded.
    \end{enumerate}
 \end{proposition}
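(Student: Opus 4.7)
The plan is to use one uniform geometric picture for all six parts: for distinct $x,y\in\Omega$ the affine ray $R(x,y)$ exits $\overline{\Omega}$ at a unique point $a=a_{\Omega}(x,y)\in\tilde\partial\Omega$, with $y$ strictly between $x$ and $a$. When $a\in\r^n$ this yields the additive relation $|x-a|=|x-y|+|y-a|$, and in particular $|x-a|\ge|y-a|$, so $F_{\Omega}(x,y)\ge 0$; combined with the definitional $F_{\Omega}(x,x)=0$ this gives (a). For (c), three collinear points on a common ray share the same exit point $a$, so the log-ratios telescope:
$$
\log\frac{|x-a|}{|y-a|}+\log\frac{|y-a|}{|z-a|}=\log\frac{|x-a|}{|z-a|},
$$
which is exactly additivity of $F_\Omega$ along affine segments. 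For (d) it suffices to exhibit one example: with $\Omega=(-1,1)$, $x=0$, $y=\tfrac12$, one computes $F_\Omega(x,y)=\log 2$ while $F_\Omega(y,x)=\log(3/2)$.

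For (e), when $\Omega$ is bounded $a$ is finite and $|y-a|<|x-a|$ strictly for $x\neq y$, so $F_\Omega(x,y)>0$. Conversely, an open unbounded convex set in $\r^n$ admits a recession direction, i.e.\ some $x\in\Omega$ and $v\neq 0$ with $x+tv\in\Omega$ for every $t\ge 0$; taking $y=x+v$ forces $a_\Omega(x,y)\in H_\infty$, and hence $F_\Omega(x,y)=0$ by the $\infty/\infty$ convention. For (f), fix $x_0\in\Omega$ and any boundary point $a_0\in\partial\Omega$ (which exists because $\Omega\ne\r^n$ and $\Omega$ is connected and open, so $\partial\Omega\neq\emptyset$); then $\partial\Omega\subset\r^n$, so $a_0$ is finite. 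For points $y_n\to a_0$ along $[x_0,a_0)$ one has $a_\Omega(x_0,y_n)=a_0$ and $|y_n-a_0|\to 0$, whence $F_\Omega(x_0,y_n)\to+\infty$.

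The main obstacle is the triangle inequality (b). My preferred route is the supporting-functional representation
$$
F_{\Omega}(x,y)=\sup\bigl\{\log(\phi(x)/\phi(y))\;:\;\phi\text{ affine on }\r^n,\ \phi>0\text{ on }\Omega\bigr\},
$$
from which (b) is immediate by writing $\log(\phi(x)/\phi(z))=\log(\phi(x)/\phi(y))+\log(\phi(y)/\phi(z))$ and taking the supremum. The key computation is that if $\phi$ is a supporting affine functional vanishing at $a=a_\Omega(x,y)$, then parametrizing $y=(1-t)x+ta$ with $t=|x-y|/|x-a|$ yields $\phi(y)=(1-t)\phi(x)$, so $\phi(x)/\phi(y)=|x-a|/|y-a|$; and for any other admissible $\phi$, the inequality $\phi(a)\ge 0$ forces $\phi(y)\ge(1-t)\phi(x)$, hence a smaller log-ratio. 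The delicate point is the unbounded case $a\in H_\infty$, where no finite supporting hyperplane is available at $a$, but where one checks that the sup over admissible $\phi$ still equals $0$, in agreement with $F_\Omega(x,y)=0$. Should this representation prove awkward in the unbounded setting, I would fall back on the classical projective proof via Menelaus's theorem that the authors relegate to Appendix B, or on the Finsler length-metric interpretation mentioned in the introduction, in which the triangle inequality is automatic.
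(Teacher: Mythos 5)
Your argument is correct and follows essentially the same route as the paper: parts (a), (c), (e) and (f) are verbatim the paper's reasoning, your explicit one-dimensional example for (d) fills in what the paper leaves as ``easy to check,'' and your supporting-functional representation of $F_{\Omega}$ for part (b) is precisely the paper's Corollary \ref{cor.dFh2}, from which the authors likewise deduce the triangle inequality in Section \ref{sec.triangle} (they phrase it by choosing a single supporting functional at $a_{\Omega}(x,z)$ rather than taking the supremum, but the content is the same, and your treatment of the $a\in H_{\infty}$ case via the constant functionals is sound). No gaps.
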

Property (a) and (b) say that $F_{\Omega}$ is a \emph{weak metric}.

\begin{proof}
Property (a) follows from the fact that $y \in [x,a_\Omega(x,y)]$, therefore $ \frac{\vert x-a_\Omega(x,y) \vert}{\vert y-a_\Omega(x,y) \vert} \geq 1$
and we have equality if $y=x$. The triangle inequality (b) is not completely obvious.
The classical proof by Hilbert is given in Appendix B and a new proof is given in  Section \ref{sec.triangle}.

To prove (c), observe that  if $y\in [x,z]$ and $x \neq y \neq z$, then $a_{\Omega} (x,y) = 
a_{\Omega}(x,z)=a_{\Omega}(y,z)$.  Denoting this common point by $a$, we have
$$
 F_{\Omega}(x,y)+F_{\Omega}(y,z)=
 \log\frac{|x-a|}{|y-a|} + \log\frac{|y-a|}{|z-a|} =  \log\frac{|x-a|}{|z-a|}
 =  F_{\Omega}(x,z).  
$$     
Property (d) is easy to check, see Section \ref{sec.ReverseFunk} for more details.
 
Property (e) follows immediately from the definition and the fact that a convex domain $\Omega$ in $\r^n$
is unbounded if and only if it contains a ray, see \cite[Remark 3.13]{PT-Minkowski}.  

To prove (f), we recall that we always assume $\Omega\neq \mathbb{R}^n$, and therefore $\partial \Omega\neq \emptyset$. 
Let $x$ be a point in $\Omega$ and $a$ a point in $\partial\Omega$ and consider the open Euclidean segment $(x,a)$ contained in $\Omega$. For any sequence $x_n$ in this segment converging to $a$ (with respect to the Euclidean metric), we have $F_{\Omega}(x,x_n)= \log \frac{\vert x-a\vert}{\vert x_n-a\vert}\to\infty$ as $n\to\infty$. 
\end{proof}

 \medskip
 
 It is sometimes useful to see the Funk metric from  other viewpoints. If $x,y$ and $z$ are three aligned points 
 in $\r^n$ with $z\neq y$, then there is a unique $\lambda \in \r$ such that $x = z+\lambda (y-z)$. We call this number  
 the \emph{division ratio}\index{division ratio} 
 or  \emph{affine ratio}\index{affine ratio} of $x$ with respect to $z$ and $y$ and denote it suggestively by $\lambda = zx/zy$. Note 
 also that $\lambda >1$ if and only if $y\in [x,z]$. We extend the notion of  affine ratio to the case $z \in H_{\infty}$
 by setting $zx/zy = 1$ if $z  \in H_{\infty}$. We then have   
 $$
  F_{\Omega}(x,y) = \log (\lambda),
 $$
where $\lambda$ is the affine ratio of $x$ with respect to $a=a_{\Omega}(x,y)$ and $y$.

\begin{proposition} \label{prop.dFh}
Let $x$ and $y$ be two  points in the convex  domain $\Omega$ and set $a=a_{\Omega}(x,y)$.
Let  $h : \r^n \to \r$ be an arbitrary linear form such $h(y) \neq h(x)$. Then  
$$
 F_{\Omega}(x,y) =  \log \left(\frac{h(a)-h(x)}{h(a)-h(y)}\right).
$$
\end{proposition}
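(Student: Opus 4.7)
The plan is to exploit the fact that $x$, $y$ and $a=a_\Omega(x,y)$ are collinear, so that the restriction of the linear form $h$ to the affine line through them is an affine (one-variable) map, and affine maps of the real line preserve division ratios. Concretely, I would first parametrize the line $R(x,y)$ by writing $a = x + \mu(y-x)$ for the scalar $\mu$ determined by $a$. Since $y \in [x,a]$ (here $a$ is finite, i.e.\ $a \notin H_\infty$) and $a\neq y$, we have $\mu > 1$.

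Next, a direct computation gives $a - x = \mu(y-x)$ and $a - y = (\mu-1)(y-x)$, whence
$$
\frac{|x-a|}{|y-a|} \;=\; \frac{\mu}{\mu-1}.
$$
Applying $h$ to the same parametrization yields $h(a)-h(x) = \mu\bigl(h(y)-h(x)\bigr)$ and $h(a)-h(y) = (\mu-1)\bigl(h(y)-h(x)\bigr)$, and because $h(x)\neq h(y)$ the factor $h(y)-h(x)$ may be cancelled, giving
$$
\frac{h(a)-h(x)}{h(a)-h(y)} \;=\; \frac{\mu}{\mu-1}.
$$
The two expressions coincide, and taking logarithms together with the definition of $F_\Omega$ yields the claimed identity. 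Both sides are positive because $\mu>1$, so the logarithm is well defined.

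Finally, I would dispose of the case $a \in H_\infty$. In that situation the convention of Definition \ref{def:Funk} sets $F_\Omega(x,y)=0$, and the natural way to see that the right-hand side also equals $0$ is to let $a$ recede to infinity along $R(x,y)$: parametrizing points $a_t = x + t(y-x)$ with $t\to\infty$, the ratio $\mu/(\mu-1)$ computed from $h$ above tends to $1$, so its logarithm tends to $0$. This is precisely the projective interpretation of the convention $\infty/\infty = 1$ that appears in the definition. The only mildly subtle point, and thus the main obstacle, is this limiting behaviour at infinity, where one has to reconcile the projective viewpoint with the Euclidean formula for $F_\Omega$; away from $H_\infty$ the proof is a one-line consequence of the invariance of the affine ratio under the affine map $h|_{R(x,y)}$.
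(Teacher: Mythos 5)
Your argument is correct and is essentially the paper's own proof: both rest on writing $a = x + \mu(y-x)$ (equivalently $(a-x)=\lambda(a-y)$ with $\lambda=\mu/(\mu-1)$) and observing that the linear form $h$ preserves this division ratio, with the case $a\in H_{\infty}$ handled by the same limiting interpretation of $\infty/\infty=1$. No meaningful difference in approach.
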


\begin{proof}  We first observe that a linear function $h : \r^n \to \r$ is either constant 
on a given ray $R(x,y)$, or it is injective on that ray. 
If  $a=a_{\Omega}(x,y) \in H_{\infty}$, then 
$$
 h(a) = \lim_{t\to \infty} h(x+t(y-x)) = h(x) + t (h(y) - h(x)) = \pm \infty,
$$
and the proposition is 
true since $\log(\infty/\infty) = \log(1)=0 =  F_{\Omega}(x,y)$. If 
$a \not\in H_{\infty}$, then the   division ratio $ \lambda = {ax}/{ay} > 1$ 
and we have 
$(a-x) = \lambda (a-y)$ and $F_{\Omega}(x,y) =  \log (\lambda)$. Since $h : \r^n \to \r$  is 
linear, then $(h(a)-h(x))= \lambda (h(a)-h(y))$ and the Proposition follows at once.
 \end{proof}
 
 \medskip
 
We now recall some important notions from convex geometry. For these and other classical results on convex geometry, 
the reader can consult the books by  Eggleston \cite{Eggleston},  Fenchel \cite{Fenchel1},  Valentine \cite{Valentine}
and  Rockafellar  \cite{Rockafellar}.

\begin{definition}
 Let $\Omega \subset \r^n$ be a proper convex domain and $a\in \partial \Omega$ be a (finite) boundary
 point. We assume that $\Omega$ contains the origin $0$.
 A \emph{supporting functional}\index{supporting functional} at $a$ for  $\Omega$ is a linear function $h : \r^n \to \r$ such that $h(a) =1$
 and $h(x) < 1$ for any point $x$ in $\Omega$\footnote{The reader should not confuse the notion of
 supporting functional with that of support function $\frak{h}_{\Omega}: \r^n \to \r$ defined as
 $\frak{h}_{\Omega}(x) = \sup \{ \langle x,y\rangle \mid y \in \Omega\}$. }. The hyperplane $H = \{z \mid h(z) = 1\}$ is said to
 be a \emph{support hyperplane}\index{support hyperplane} at $a$ for $\Omega$.  If $\Omega$ is unbounded, then the
 hyperplane at infinity $H_{\infty}$ is also considered to be a support hyperplane.
 A basic fact is that any boundary point of a convex domain admits one or several supporting functional(s).
 
 Let us denote by $\mathcal{S}_{\Omega}$ the set of all  supporting functionals of $\Omega$. Then 
 $$
   \frak{p}_{\Omega}(x) = \sup   \{ h(x)   \mid  {h\in \mathcal{S}_{\Omega}} \}
 $$
 is the \emph{Minkowski functional}\index{Minkowski functional} of $\Omega$. This is the unique weak Minkowski norm  such that 
 $$
  \Omega = \{ x\in \r^n \mid  \frak{p}_{\Omega}(x) < 1\}.
 $$
 \end{definition}

We have the following consequences of the previous   Proposition:

\begin{corollary}\label{cor.dFh1}
Let $\Omega \subset \r^n$ be a   convex domain  containing the origin and let 
$h$ be a    supporting functional  for $\Omega$. We then have
$$
 F_{\Omega}(x,y) \geq \log \left(\frac{1-h(x)}{1-h(y)}\right)
$$  
for any $x,y \in \Omega$.
Furthermore we have equality if and only if either $a =a_{\Omega}(x,y)  \in H_{\infty}$  and $h(x) = h(y)$,
or $a   \not\in H_{\infty}$ and \  $h(a) = 1$.
 \end{corollary}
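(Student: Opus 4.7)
My plan is to apply Proposition~\ref{prop.dFh} with the given supporting functional $h$ and reduce the claimed inequality to the elementary bound $h(a) \leq 1$, which holds since $h$ is a supporting functional and $a \in \overline{\Omega}$.

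In the generic case $h(x) \neq h(y)$, Proposition~\ref{prop.dFh} gives $F_{\Omega}(x,y) = \log\frac{h(a)-h(x)}{h(a)-h(y)}$. Both $\frac{h(a)-h(x)}{h(a)-h(y)}$ (which equals the affine ratio $\lambda > 1$) and $\frac{1-h(x)}{1-h(y)}$ (both of its terms being positive since $h<1$ on $\Omega$) are positive, so monotonicity of $\log$ reduces the inequality to
\[
  \frac{h(a)-h(x)}{h(a)-h(y)} \;\geq\; \frac{1-h(x)}{1-h(y)}.
\]
A direct algebraic simplification yields
\[
  \frac{h(a)-h(x)}{h(a)-h(y)} - \frac{1-h(x)}{1-h(y)}
  \;=\; \frac{h(y)-h(x)}{h(a)-h(y)}\cdot\frac{1-h(a)}{1-h(y)}.
\]
Writing $y = x+\mu(a-x)$ with $\mu\in(0,1)$ gives $h(y)-h(x) = \mu(h(a)-h(x))$ and $h(a)-h(y) = (1-\mu)(h(a)-h(x))$, so the first factor on the right is positive. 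The second factor is nonnegative because $h(a)\leq 1$ (the defining inequality of the supporting functional, extended to $\overline{\Omega}$ by continuity) and $h(y)<1$. Hence the difference is $\geq 0$, with equality exactly when $h(a)=1$.

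The remaining cases are routine. If $h(x)=h(y)$, the right-hand side is $\log 1 = 0 \leq F_{\Omega}(x,y)$, and equality requires $F_{\Omega}(x,y)=0$; since for finite $a$ one has $\lambda=|x-a|/|y-a|>1$ whenever $x\neq y$, this forces $a\in H_{\infty}$. Conversely, if $a\in H_{\infty}$ and $h(x)\neq h(y)$, the full ray from $x$ through $y$ lies in $\overline{\Omega}$, so $h(x)+t(h(y)-h(x))\leq 1$ for all $t\geq 0$ forces $h(y)\leq h(x)$, and one checks that $\log\frac{1-h(x)}{1-h(y)}\leq 0 = F_{\Omega}(x,y)$, strictly unless $h(x)=h(y)$. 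This matches the stated equality conditions.

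The main obstacle is sign-tracking: the quantities $h(a)-h(x)$ and $h(a)-h(y)$ can a priori have either sign, so one must confirm that they share signs (via the parametrization $y=x+\mu(a-x)$) before cross-multiplying, and verify that the simplified difference has the correct sign. Once this bookkeeping is in place, the supporting-functional inequality $h(a)\leq 1$ finishes the proof and pinpoints the equality case.
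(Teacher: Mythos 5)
Your proof is correct and follows essentially the same route as the paper's: both reduce the statement to Proposition \ref{prop.dFh} together with the bound $h(a)\leq 1$, using the same case split on $h(x)=h(y)$ and on $a\in H_{\infty}$. The only cosmetic difference is that you verify the key inequality via an explicit algebraic identity for the difference of the two ratios (with the sign bookkeeping through $y=x+\mu(a-x)$), where the paper instead invokes the monotonicity of $s\mapsto \bigl(s-h(x)\bigr)/\bigl(s-h(y)\bigr)$.
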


\begin{proof}
If $h(x) = h(y)$ there is nothing to prove, we thus assume that  $h(x) \neq h(y)$.
Note that this condition means that  the line through $x$ and $y$ is not parallel
to the supporting  hyperplane $H = \{h=1\}$.

 We first assume that $a =a_{\Omega}(x,y)  \not\in H_{\infty}$, 
The hypothesis $h(x) \neq h(y)$ implies $h(a) \neq h(y)$.
Because  the function 
$a \mapsto   \log\frac{|x-a|}{|y-a|}$ is strictly monotone decreasing and   $h(a) \leq 1$, 
 we have by Proposition \ref{prop.dFh}:
$$
  F_{\Omega}(x,y)=  \log \left(\frac{h(a)-h(x)}{h(a)-h(y)}\right) \geq \log \left(\frac{1-h(x)}{1-h(y)}\right).  
 $$
The equality holds if and only if $h(a) = 1$. 

Suppose now that $a   \in H_{\infty}$; this means that  $ F_{\Omega}(x,y) =0$ and 
 the ray $R(x,y)$ is contained in $\Omega$.  In particular, we have 
$$
  h(x) + \lambda h(y-x) =  h(x+ \lambda \cdot (y-x)) < 1,  \quad  \forall \lambda \geq 0,
$$
which implies $h(y) - h(x) = h(y-x) \leq 0$. Since we assumed $h(x) \neq h(y)$, we have 
$h(y) < h(x)$ and therefore
$$
   F_{\Omega}(x,y)= 0 > \log \left(\frac{1-h(x)}{1-h(y)}\right).  
$$
 \end{proof}
 
 \medskip
 
\begin{corollary} \label{cor.dFh2}
If $\Omega \subset \r^n$ is a convex domain  containing the origin,
then
$$
 F_{\Omega}(x,y) = \max\left\{0,   \sup_{h\in \mathcal{S}_{\Omega}} \log \left(\frac{1-h(x)}{1-h(y)}\right)\right\}.
$$
\end{corollary}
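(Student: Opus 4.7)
The plan is to bootstrap Corollary \ref{cor.dFh1}: one inequality is already baked in, so the work lies in producing a maximizing supporting functional. Let me denote the right-hand side by $M(x,y)$.

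First, I would establish the inequality $F_{\Omega}(x,y) \geq M(x,y)$ essentially for free. By Proposition \ref{prop.basicproperties}(a), $F_{\Omega}(x,y) \geq 0$, and by Corollary \ref{cor.dFh1}, $F_{\Omega}(x,y) \geq \log\left(\frac{1-h(x)}{1-h(y)}\right)$ for every $h \in \mathcal{S}_{\Omega}$. Taking the supremum over $h$ and comparing with $0$ gives the lower bound.

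For the reverse inequality I would split into two cases depending on the position of $a = a_{\Omega}(x,y)$. If $a \in H_{\infty}$, then by definition $F_{\Omega}(x,y) = 0$; the already-proved lower bound forces $M(x,y) \geq 0$, and the upper bound $M(x,y) \leq F_{\Omega}(x,y) = 0$ follows because Corollary \ref{cor.dFh1} shows every $\log\left(\frac{1-h(x)}{1-h(y)}\right)$ is $\leq F_{\Omega}(x,y)=0$, so the max is exactly $0$. If instead $a$ is a finite boundary point of $\Omega$, I invoke the basic fact recalled just before the corollary: every finite boundary point of a convex domain admits a supporting functional. Choose such an $h$ with $h(a) = 1$. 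The equality clause in Corollary \ref{cor.dFh1} then yields
$$
F_{\Omega}(x,y) = \log\left(\frac{1-h(x)}{1-h(y)}\right) \leq M(x,y),
$$
provided $h(x) \neq h(y)$; if the chosen $h$ accidentally satisfies $h(x) = h(y)$, then the line through $x$ and $y$ would be parallel to the support hyperplane $H = \{h=1\}$ passing through $a$, which is impossible since $a \in \tilde R(x,y)$. This closes the chain of inequalities.

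The only genuinely non-routine point is the selection of the supporting functional realizing equality, and for that I rely on the standard convex-geometry fact (stated in the definition block) that every boundary point supports at least one hyperplane. Everything else is bookkeeping between the two cases of Corollary \ref{cor.dFh1}, and the $\max\{0,\cdot\}$ on the right-hand side is precisely what makes the formula uniformly valid in the bounded and unbounded settings.
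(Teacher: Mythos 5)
Your proposal is correct and follows essentially the same route as the paper: the lower bound comes from Corollary \ref{cor.dFh1} together with nonnegativity, and the upper bound comes from selecting a supporting functional at the finite boundary point $a=a_{\Omega}(x,y)$ so that the equality clause of Corollary \ref{cor.dFh1} (equivalently, Proposition \ref{prop.dFh} with $h(a)=1$) realizes the supremum, with the degenerate case handled separately. Your explicit check that $h(x)\neq h(y)$ for a supporting functional at a finite $a$ on the ray is a small point the paper leaves implicit.
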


\begin{proof}
We first assume that $F_{\Omega}(x,y) > 0$. Then $a = a_{\Omega}(x,y)\not\in H_{\infty}$.
Let us denote by $\Phi(x,y)$ the right hand side in the above formula. Then Proposition \ref{prop.dFh}
implies that $F_{\Omega}(x,y) \leq \Phi(x,y)$ and Corollary \ref{cor.dFh1} implies the
converse inequality. 

If $F_{\Omega}(x,y) = 0$, then $R(x,y) < 0$. 
For any 
supporting function $h$, we then   have  
$h(x+t(y-x)) =  h(x) + t(h(y)-h(x)) < 1$ for any $t>0$. This implies
that $h(y) \leq h(x)$ and it  follows that $\Phi(x,y)=0$. 
\end{proof}

\medskip

Notice that for bounded domains, the previous formula reduces to
$$
 F_{\Omega}(x,y) =  \sup_{h\in \mathcal{S}_{\Omega}} \log \left(\frac{1-h(x)}{1-h(y)}\right).
$$  
This can also be reformulated as follows (compare to \cite[Theorem 1]{Yamada2014}):  

\begin{corollary}
The Funk metric in a bounded  convex domain  $\Omega \subset \r^n$ is given by
$$
 F_{\Omega}(x,y) = \sup_H \log \left(\frac{\dist(x,H)}{\dist(y,H)}\right),
$$  
where the supremum is taken over the set of all support hyperplanes $H$ for $\Omega$
and $\dist(x,H)$ is the Euclidean distance from $x$ to $H$. 
\end{corollary}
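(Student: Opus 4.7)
The plan is to deduce this from the preceding corollary, which already gives
$$
F_{\Omega}(x,y) = \sup_{h\in \mathcal{S}_{\Omega}} \log \left(\frac{1-h(x)}{1-h(y)}\right),
$$
by showing that each individual ratio inside the logarithm is exactly $\dist(x,H)/\dist(y,H)$, where $H=\{h=1\}$ is the support hyperplane associated to the supporting functional $h$.

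First I would recall the standard Euclidean formula for the distance from a point to a hyperplane. Writing the linear form as $h(z) = \langle n_h, z\rangle$ for some vector $n_h \in \r^n$, the affine hyperplane $H=\{z \mid h(z) = 1\}$ has unit normal $n_h/|n_h|$, so for any point $p \in \r^n$,
$$
\dist(p,H) = \frac{|1 - h(p)|}{|n_h|}.
$$
Since $x,y \in \Omega$ and $h \in \mathcal{S}_{\Omega}$, the defining property $h(z)<1$ on $\Omega$ means $1-h(x)>0$ and $1-h(y)>0$, so the absolute values may be dropped. Hence
$$
\frac{\dist(x,H)}{\dist(y,H)} = \frac{(1-h(x))/|n_h|}{(1-h(y))/|n_h|} = \frac{1-h(x)}{1-h(y)},
$$
the normal's length cancelling. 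This is the key identity.

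Next, I would check that taking the supremum over supporting functionals is the same as taking the supremum over support hyperplanes. Every support hyperplane $H$ of $\Omega$ (not at infinity, which here we may ignore since $\Omega$ is bounded) arises from a unique supporting functional $h$ normalized by $h(a)=1$ on the chosen boundary point $a \in H\cap \partial\Omega$, since the origin lies in $\Omega$ and so not on $H$. Conversely every element of $\mathcal{S}_{\Omega}$ gives one such hyperplane. The bijection $h \leftrightarrow H$ and the ratio identity above then immediately transform the previous corollary into the claimed formula.

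There is no real obstacle here; the only tiny subtlety is the normalization and the sign: one must use that $\Omega$ is bounded (so we can discard the $\max\{0,\cdot\}$ in Corollary \ref{cor.dFh2} and need not worry about $H_\infty$), and that $0 \in \Omega$ guarantees the canonical normalization $h(a)=1$ makes sense. Once these are noted, the corollary is simply a geometric re-reading of the supporting-functional formula.
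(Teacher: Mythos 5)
Your proposal is correct and is exactly the intended argument: the paper states the corollary as a direct reformulation of the bounded-domain case of Corollary \ref{cor.dFh2}, relying on the identity $\dist(p,H)=\frac{1-h(p)}{|n_h|}$ for $H=\{h=1\}$ and the cancellation of $|n_h|$ in the ratio, together with the bijection between supporting functionals and support hyperplanes. The only point worth adding is that the statement does not assume $0\in\Omega$, so one should observe that both sides of the formula are translation-invariant before normalizing so that the origin lies in $\Omega$.
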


 \medskip

The next consequence of Proposition \ref{prop.dFh} is the following relation between the division ratio
of three aligned points in a convex domain and the Funk distances between those points.

 \begin{corollary} \label{cor.dFh3}
Let $x,y$ and $z$ be three aligned points in the   convex domain   $\Omega \subset \r^n$.
Suppose $ F_{\Omega}(x,y) >0$ and $z = x + t(y-x)$ for some $t \geq 0$. Then 
\begin{eqnarray}
 \label{eq.ratio1}
  t & = & \frac {{\mathrm{e}^{F_{\Omega}(x,y)}}\cdot ( {\mathrm{e}^{F_{\Omega}(x,z)}}-1)}{{\mathrm{e}^{F_{\Omega}(x,z)}}\cdot (\mathrm{e}^{F_{\Omega}(x,y)}-1)},
  \\  \label{eq.ratio2}
 F_{\Omega}(x,z) & = & F_{\Omega}(x,y) -  \log\left(\mathrm{e}^{F_{\Omega}(x,y)} + t \cdot (1-\mathrm{e}^{F_{\Omega}(x,y)}) \right).  
\end{eqnarray}
\end{corollary}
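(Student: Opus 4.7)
My plan is to exploit the collinearity of the four points $x$, $y$, $z$, and the boundary point $a = a_{\Omega}(x,y)$ by parametrizing the line through them with a single real variable. Because $F_{\Omega}(x,y) > 0$, Definition \ref{def:Funk} forces $a \not\in H_{\infty}$, so $a \in \r^n$ and we may write $a = x + \alpha(y-x)$ for a unique real $\alpha$. Since $y$ lies in the open segment $(x,a)$ (as $\Omega$ is open, $y \in \Omega$, and $a \in \partial \Omega$), we get $\alpha > 1$. The point $z = x + t(y-x)$ with $t \geq 0$ lies on the ray $R(x,y)$, so in particular $R(x,z) = R(x,y)$ and hence $a_{\Omega}(x,z) = a$; moreover, $z \in \Omega$ forces $0 \leq t < \alpha$.

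Next I would read off the three relevant Euclidean distances from the parametrization,
\[
|x-a| = \alpha\, |y-x|, \qquad |y-a| = (\alpha-1)\, |y-x|, \qquad |z-a| = (\alpha - t)\, |y-x|,
\]
which, by Definition \ref{def:Funk}, translate into
\[
\mathrm{e}^{F_{\Omega}(x,y)} = \frac{\alpha}{\alpha-1}, \qquad \mathrm{e}^{F_{\Omega}(x,z)} = \frac{\alpha}{\alpha - t}.
\]
From the first identity, $\alpha = \mathrm{e}^{F_{\Omega}(x,y)}/(\mathrm{e}^{F_{\Omega}(x,y)} - 1)$; substituting into the second and solving for $t$ yields formula (\ref{eq.ratio1}), while solving for $\mathrm{e}^{F_{\Omega}(x,z)}$ in terms of $t$ and $\mathrm{e}^{F_{\Omega}(x,y)}$ and taking logarithms yields formula (\ref{eq.ratio2}).

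There is no real obstacle beyond careful bookkeeping; because $z$ lies on a single ray whose exit point from $\Omega$ is the common $a$, the argument works uniformly for $t \in [0,1]$ and $t \in [1,\alpha)$, so no case split is needed. As an alternative route, one could apply Proposition \ref{prop.dFh} to any linear form $h$ not constant on the line through $x$ and $y$, use the linear relation $h(z) = h(x) + t\,(h(y) - h(x))$, and perform the same elimination; the reason the final formulas take such a clean form is simply that the Funk distance along a fixed ray depends only on the affine ratio with respect to the exit point~$a$.
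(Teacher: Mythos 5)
Your proof is correct, and it takes a mildly different route from the paper's. The paper chooses a supporting functional $h$ for $\Omega$ at the common exit point $a = a_{\Omega}(x,y) = a_{\Omega}(x,z)$ and invokes Corollary \ref{cor.dFh1} to write $\mathrm{e}^{F_{\Omega}(x,y)} = (1-h(x))/(1-h(y))$ and $\mathrm{e}^{F_{\Omega}(x,z)} = (1-h(x))/(1-h(z))$, then eliminates $h$ via the linearity relation $h(z)-h(x) = t\,(h(y)-h(x))$ --- essentially the ``alternative route'' you sketch in your closing sentence. You instead parametrize the ray itself, writing $a = x+\alpha(y-x)$ and reading the two distances off Definition \ref{def:Funk} as $\alpha/(\alpha-1)$ and $\alpha/(\alpha-t)$, then eliminate $\alpha$. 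The two computations are algebraically parallel (on the line one has $1-h(x)=\alpha\, h(y-x)$, $1-h(y)=(\alpha-1)h(y-x)$, $1-h(z)=(\alpha-t)h(y-x)$, so $\alpha$ is just $1-h(x)$ up to a common factor), but yours is more self-contained: it uses only the definition plus the observation that $F_{\Omega}(x,y)>0$ forces $a\notin H_{\infty}$, whereas the paper leans on the supporting-functional apparatus. The bookkeeping you record --- $\alpha>1$ because $y$ lies in the open segment from $x$ to $a$, $0\le t<\alpha$ because $z\in\Omega$, and $a_{\Omega}(x,z)=a$ because $z$ lies on the same ray --- is exactly what needs saying, and the degenerate case $t=0$ (where $a_{\Omega}(x,z)$ is undefined but both formulas reduce to trivial identities) is harmless, as you note.
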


\begin{proof}
Choose a supporting functional  $h$ for $\Omega$ at the point $a = a_{\Omega}(x,y)=a_{\Omega}(x,z)$. Then we have,
from Corollary \ref{cor.dFh1}:
$$
  \mathrm{e}^{F_{\Omega}(x,y)} =  \frac{1-h(x)}{1-h(y)}  \quad \text{ and}  \quad  \mathrm{e}^{F_{\Omega}(x,z)} =  \frac{1-h(x)}{1-h(z)}.
$$
Therefore
$$
 \frac{\mathrm{e}^{F_{\Omega}(x,z)} -1}{\mathrm{e}^{F_{\Omega}(x,z)}} 
 =  \left(\frac{1-h(z)}{1-h(x)}\right) \left(\frac{1-h(x)}{1-h(z)} - 1\right)
 = \frac{h(z)-h(x)}{1-h(x)},
$$
and likewise
$$
 \frac{\mathrm{e}^{F_{\Omega}(x,y)} -1}{\mathrm{e}^{F_{\Omega}(x,y)}} 
 = \frac{h(y)-h(x)}{1-h(x)}.
$$
Thus
 \begin{eqnarray*}
  \frac {{\mathrm{e}^{F_{\Omega}(x,y)}}\cdot ( {\mathrm{e}^{F_{\Omega}(x,z)}}-1)}{{\mathrm{e}^{F_{\Omega}(x,z)}}\cdot (\mathrm{e}^{F_{\Omega}(x,y)}-1)}
= \frac{h(z)-h(x)}{h(y)-h(x)}
= \frac{h(z-x)}{h(y-x)}
  = t.
\end{eqnarray*}
This proves Equation (\ref{eq.ratio1}). To prove the Equation (\ref{eq.ratio2}), we now resolve  
$$
  \frac{\mathrm{e}^{F_{\Omega}(x,z)} -1}{\mathrm{e}^{F_{\Omega}(x,z)}}  = t \cdot  \frac{\mathrm{e}^{F_{\Omega}(x,y)} -1}{\mathrm{e}^{F_{\Omega}(x,y)}} 
$$
for $\mathrm{e}^{F_{\Omega}(x,z)}$. This gives us
$$
\mathrm{e}^{F_{\Omega}(x,z)}= \frac {\mathrm{e}^{F_{\Omega}(x,y)}}{\mathrm{e}^{F_{\Omega}(x,y)} +t \left( 1-\mathrm{e}^{F_{\Omega}(x,y)} \right)},
$$
which is equivalent to  (\ref{eq.ratio2}).
\end{proof}

 \medskip
 
It is useful to observe that computing the Funk distance between two points in $\Omega$ is a one-dimensional operation. 
More precisely, if $S = [a_1,a_2] \subset \r^n$ is a compact segment in  $\r^n$ containing the point $x$ and $y$ in its interior
with $y \in [x,a_2]$, we shall   write
$$
 F_S(x,y) = \log\frac{|x-a_2|}{|y-a_2|}.
$$
Although $S$ is not an open set, $F_S(x,y)$  clearly corresponds to the one-dimensional Funk metric in the
relative interior of $S$.  

\begin{proposition}\label{prop.Funk1D}
 The Funk distance between two points $x$ and $y$ in $\Omega$ is given by
 $$
  F_{\Omega}(x,y) = \inf  \left\{F_S (x,y) \, \big| \ S  \text{ is a segment in $\Omega$
  containing $x$ and $y$}   \right\}.
 $$
\end{proposition}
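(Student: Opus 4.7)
The plan is to reduce the assertion to a one-dimensional computation along the line through $x$ and $y$, and then verify the inequality $F_S(x,y) \geq F_\Omega(x,y)$ with an approximating sequence realizing the infimum.

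First I would note that any segment $S$ containing both $x$ and $y$ lies on the affine line $L$ through $x$ and $y$, so I may parametrize $L$ by $t \mapsto x + t(y-x)$, with $t=0$ at $x$, $t=1$ at $y$. The hypothesis $y \in [x,a_2]$ forces $a_2 = x + t_0 (y-x)$ for some $t_0 > 1$. A direct computation gives
$$
F_S(x,y) = \log \frac{|x-a_2|}{|y-a_2|} = \log \frac{t_0}{t_0-1},
$$
which depends only on $a_2$ (the endpoint $a_1$ is irrelevant, as long as the segment contains $x$ and $y$ in its interior). The function $t_0 \mapsto \log\bigl(t_0/(t_0-1)\bigr)$ is strictly decreasing on $(1,\infty)$, tending to $+\infty$ as $t_0 \to 1^+$ and to $0$ as $t_0\to\infty$.

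Next I would identify where $a = a_\Omega(x,y)$ sits on this line. If $a \notin H_\infty$, then $a = x + t^*(y-x)$ for some $t^* > 1$, and $e^{F_\Omega(x,y)} = \frac{|x-a|}{|y-a|} = \frac{t^*}{t^*-1}$. Since $S \subset \Omega$ and $\Omega$ is open, $a_2$ must lie strictly between $y$ and $a$ on $L$, i.e.\ $1 < t_0 < t^*$; by monotonicity,
$$
F_S(x,y) = \log\frac{t_0}{t_0-1} > \log\frac{t^*}{t^*-1} = F_\Omega(x,y).
$$
Letting $t_0 \to t^{*-}$ along segments $S_n \subset \Omega$ yields $F_{S_n}(x,y) \to F_\Omega(x,y)$, proving equality of the infimum. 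If on the other hand $a \in H_\infty$, then the whole ray $R(x,y)$ is contained in $\Omega$ and $F_\Omega(x,y) = 0$; any finite $t_0 > 1$ gives $F_S(x,y) > 0$, while letting $t_0 \to \infty$ sends $F_S(x,y) \to 0 = F_\Omega(x,y)$.

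The main point requiring care is the uniform treatment of the two cases ($a$ finite versus $a$ at infinity), and the observation that the constraint $S \subset \Omega$ together with openness of $\Omega$ forbids $a_2$ from reaching $a_\Omega(x,y)$, so the infimum is approached but never attained. Apart from this, the argument is essentially a one-variable monotonicity statement and presents no real obstacle.
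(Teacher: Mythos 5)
Your proof is correct and follows essentially the same route as the paper: reduce to the line through $x$ and $y$ and use the strict monotonicity of $a_2 \mapsto \log\frac{|x-a_2|}{|y-a_2|}$ as the endpoint moves toward $a_\Omega(x,y)$. The paper's proof is a one-line version of this; your explicit case split between $a_\Omega(x,y)$ finite and at infinity, and the remark that openness of $\Omega$ prevents the infimum from being attained, are correct elaborations of the same idea.
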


\begin{proof}
 We identify $S$ with a segment in $\r$ with $b < x \leq y <a$ and observe that the function $a \mapsto
  \log\frac{|x-a|}{|y-a|}$ is strictly monotone decreasing. 
\end{proof}

This result can be seen as an analogy between the Funk metric and the Kobayashi metric in complex geometry, see \cite{Kobayashi}.
It has   the following immediate consequences:

\begin{corollary}
\begin{enumerate}[{\rm(i)}]
  \item If $\Omega_1 \subset \Omega_2$ are convex subsets of $\r^n$, then 
  $F_{\Omega_1} \geq F_{\Omega_2}$ with equality if and only if $\Omega_1 = \Omega_2$.
  \item Let $\Omega_1$ and $\Omega_2$ be two open convex subsets of $\mathbb{R}^n$. Then, for every $x$ and $y$ in $\Omega_{1}\cap \Omega_{2}$, we have
$F_{\Omega_{1}\cap \Omega_{2}}(x,y)=\max \left(F_{\Omega_{1}}(x,y), F_{\Omega_{2}}(x,y)\right).$
  \item Let $\Omega$ be a nonempty open
convex subset of $\mathbb{R}^n$, let $\Omega'\subset\Omega$ be the intersection of $\Omega$ with an affine subspace of $\mathbb{R}^n$, and suppose that $\Omega'\neq \emptyset$. Then, $F_{\Omega'}$ is the metric induced by $F_{\Omega}$ on  $\Omega'$ as a subspace of $(\Omega,F_{\Omega})$.
\end{enumerate}
\end{corollary}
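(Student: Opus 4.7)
The plan is to deduce all three statements from the one-dimensional characterization in Proposition \ref{prop.Funk1D}, together with an elementary geometric analysis of where the line through $x$ and $y$ exits the various convex domains.

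For part (i), the inequality $F_{\Omega_1}\geq F_{\Omega_2}$ is essentially immediate from Proposition \ref{prop.Funk1D}: every segment $S\subset \Omega_1$ containing $x$ and $y$ is also a segment in $\Omega_2$, so the infimum defining $F_{\Omega_2}(x,y)$ is taken over a larger collection and is therefore no bigger. Equivalently, comparing the exit points $a_i=a_{\Omega_i}(x,y)$ along the ray $R(x,y)$, one observes that $a_1$ lies between $y$ and $a_2$ and uses the monotonicity of the function $a\mapsto \log(|x-a|/|y-a|)$ already invoked in the proof of Proposition \ref{prop.Funk1D}. The main obstacle, and what I would attack next, is the strict inequality when $\Omega_1\subsetneq \Omega_2$. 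I would pick $p\in \Omega_2\setminus \overline{\Omega_1}$ and $x\in \Omega_1$; the segment $[x,p]$ crosses $\partial\Omega_1$ at some point $q$, and by convexity $q\in \Omega_2$ while the ray from $x$ through $q$ reenters $\Omega_2$ beyond $q$. Choosing $y\in (x,q)$ close enough to $q$ then gives $a_{\Omega_1}(x,y)=q$ but $a_{\Omega_2}(x,y)$ strictly further out, so $F_{\Omega_1}(x,y)>F_{\Omega_2}(x,y)$.

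For part (ii), one inequality is free from (i): since $\Omega_1\cap \Omega_2\subset \Omega_i$, we get $F_{\Omega_1\cap \Omega_2}\geq \max(F_{\Omega_1},F_{\Omega_2})$. For the reverse inequality, I would again pass to one dimension: restrict attention to the line $\ell$ through $x$ and $y$. Then $\ell\cap (\Omega_1\cap \Omega_2)=(\ell\cap \Omega_1)\cap (\ell\cap \Omega_2)$, and the exit point of the ray $R(x,y)$ from $\Omega_1\cap\Omega_2$ is whichever of $a_{\Omega_1}(x,y)$, $a_{\Omega_2}(x,y)$ is closer to $y$. Since $a\mapsto \log(|x-a|/|y-a|)$ is decreasing in the distance of $a$ to $y$ along this ray, the closer exit point yields the larger Funk value, which is exactly $\max(F_{\Omega_1}(x,y),F_{\Omega_2}(x,y))$.

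For part (iii), let $A$ be the affine subspace with $\Omega'=\Omega\cap A$. The line through $x$ and $y$ is entirely contained in $A$, so its exit point from $\Omega$ is automatically a point of $\overline{A}$, hence coincides with the exit point from $\Omega'$ (and both rays behave identically at infinity). Applying the definition of $F$ directly, or equivalently Proposition \ref{prop.Funk1D} with a segment $S\subset A\cap \Omega=\Omega'$, gives $F_{\Omega'}(x,y)=F_{\Omega}(x,y)$, which is the claim that $F_{\Omega'}$ is the metric induced on $\Omega'$ by $F_\Omega$. I expect no real difficulty here beyond unwinding definitions carefully at the infinity convention used in Definition \ref{def:Funk}.
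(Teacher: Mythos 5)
Your proposal is correct and follows exactly the route the paper intends: the paper states these three items as immediate consequences of Proposition \ref{prop.Funk1D} (the infimum-over-segments characterization) without writing out a proof, and your arguments via monotonicity of $a\mapsto \log(|x-a|/|y-a|)$ and comparison of exit points along the ray $R(x,y)$ supply precisely the omitted details, including a correct treatment of the strictness claim in (i).
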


\section{The reverse Funk metric}\label{sec.ReverseFunk} 

\begin{definition}
The \emph{reverse Funk metric}\index{reverse Funk metric}\index{Funk metric!reverse}\index{metric!reverse Funk} in a proper convex domain ${\Omega}$ is defined as
$$
 {}^rF_{\Omega}(x,y) = F_{\Omega}(y,x) = \log\left(\frac{|y-b|}{|x-b|}\right),
$$
where $b = a_{\Omega}(y,x)$.
\end{definition}

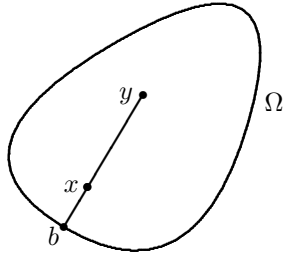
\begin{figure}[h]
\begin{center}
\begin{picture}(10,70)  
\thicklines
   \closecurve(0,-10 , 70,30,  20,60)
     \put(0,-10){\line(3,5){30}}
    \put(9,5){\circle*{3}}
   \put(76,34){$\Omega$ } 
     \put(30,40){\circle*{3}}
        \put(0,-10){\circle*{3}}
     \put(9,5){\circle*{3}}         
      \put(-6,-17){$b$}      
     \put(21,38.6){$y$} 
     \put(0,3.4){$x$}    
     \end{picture} \bigskip
   \caption{The reverse Funk Metric} 
  \end{center}  
\end{figure}

This metric satisfies the 
following properties:

\begin{proposition}\label{prop.rbasicproperties}
 The reverse Funk metric in a convex domain $\Omega \neq \r^n$ is a projective weak metric.
 It is unbounded and non-symmetric and it is separating if and only if   the domain $\Omega$ is bounded.
 \end{proposition}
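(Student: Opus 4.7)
The plan is to derive each assertion directly from the corresponding property of $F_{\Omega}$ established in Proposition \ref{prop.basicproperties}, by exploiting the defining relation ${}^rF_{\Omega}(x,y) = F_{\Omega}(y,x)$. Since this relation simply swaps the arguments, most properties transfer essentially verbatim; the only delicate points are to check that the projective property is compatible with reversing the roles of the two arguments, and that the unboundedness proof can be re-used by relabelling the sequence.

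First I would check the two weak metric axioms. Non-negativity and the vanishing ${}^rF_{\Omega}(x,x)=0$ are immediate from part (a) of Proposition \ref{prop.basicproperties}, applied to the pair $(y,x)$. For the triangle inequality I would write
\[
{}^rF_{\Omega}(x,z) \;=\; F_{\Omega}(z,x) \;\leq\; F_{\Omega}(z,y) + F_{\Omega}(y,x) \;=\; {}^rF_{\Omega}(y,z) + {}^rF_{\Omega}(x,y),
\]
using part (b) of Proposition \ref{prop.basicproperties} for the triple $(z,y,x)$, and then just rearrange. This gives that ${}^rF_{\Omega}$ is a weak metric.

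Next I would establish the projective property: if $z \in [x,y]$, then as unoriented sets $[x,y]=[y,x]$, so $z$ also lies on $[y,x]$. Applying property (c) to the collinear triple $y, z, x$ gives $F_{\Omega}(y,x) = F_{\Omega}(y,z) + F_{\Omega}(z,x)$, which translates into ${}^rF_{\Omega}(x,y) = {}^rF_{\Omega}(x,z) + {}^rF_{\Omega}(z,y)$. The non-symmetry claim is tautological: ${}^rF_{\Omega}(x,y)={}^rF_{\Omega}(y,x)$ would force $F_{\Omega}(y,x)=F_{\Omega}(x,y)$, contradicting (d). The separating property follows from the logical equivalence $x \neq y \Rightarrow {}^rF_{\Omega}(x,y)>0 \Leftrightarrow x \neq y \Rightarrow F_{\Omega}(y,x)>0$, so (e) gives the characterization in terms of boundedness of $\Omega$.

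For the unboundedness assertion I would simply relabel the sequence constructed in the proof of (f). Fixing $x \in \Omega$ and a finite boundary point $a \in \partial \Omega$, and taking $x_n$ on the open segment $(x,a)$ with $x_n \to a$, we have $F_{\Omega}(x,x_n) \to \infty$. Setting $y_n := x_n$, this says ${}^rF_{\Omega}(y_n, x) = F_{\Omega}(x, y_n) \to \infty$, proving that ${}^rF_{\Omega}$ is unbounded. I do not expect any real obstacle here: the only mild care is to observe that the relation ${}^rF_{\Omega}(x,y) = F_{\Omega}(y,x)$ interacts with the projective property by reversing the order of the decomposition, which is exactly what is needed to preserve the additivity along segments.
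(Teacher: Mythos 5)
Your proposal is correct and follows exactly the route the paper takes: the paper simply states that the result is a direct consequence of Proposition \ref{prop.basicproperties}, and your argument is the natural spelling-out of that reduction via the relation ${}^rF_{\Omega}(x,y)=F_{\Omega}(y,x)$. All the individual transfers (triangle inequality for the reversed triple, projectivity along $[x,y]=[y,x]$, and the relabelled unboundedness sequence) check out.
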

 
The proof is a direct consequence of Proposition \ref{prop.basicproperties}.
\qed

\medskip

An important difference between the Funk metric and the reverse Funk metric is the following:

\begin{proposition}\label{rfunk_unbounded}  
 Let $\Omega$ be a bounded convex domain in $\r^n$ and $x$ be a point in $\Omega$. Then the function
 $y \to {}^rF_{\Omega}(x,y)$ is bounded.
\end{proposition}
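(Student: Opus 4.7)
The plan is to use the explicit formula $\,{}^rF_{\Omega}(x,y) = \log\bigl(|y-b|/|x-b|\bigr)\,$ where $b = a_{\Omega}(y,x)\in\partial\Omega$, and to bound the numerator from above and the denominator from below by constants that depend only on the fixed point $x$ and on $\Omega$ (not on $y$).

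First I would observe that, by the definition of $a_{\Omega}(y,x)$, the boundary point $b$ lies on the ray issued from $y$ through $x$, extended beyond $x$; so the three points appear in the order $y,x,b$ along this line, and in particular $b\in\partial\Omega$ (it cannot be at infinity because $\Omega$ is bounded). For the numerator, since $y,b\in\overline{\Omega}$ and $\Omega$ is bounded, we have $|y-b|\le \operatorname{diam}(\Omega)<\infty$. For the denominator, the point $x$ is a fixed interior point, hence $|x-b|\ge \operatorname{dist}(x,\partial\Omega)>0$, and this lower bound does not depend on $y$.

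Combining these two estimates yields
\[
 0 \le {}^rF_{\Omega}(x,y) \;\le\; \log\!\left(\frac{\operatorname{diam}(\Omega)}{\operatorname{dist}(x,\partial\Omega)}\right),
\]
which proves the claim. The lower bound $0$ comes from Proposition \ref{prop.basicproperties}(a) applied to $F_{\Omega}(y,x)$.

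There is no real obstacle here; the only point worth verifying carefully is that $b\notin H_{\infty}$, which is where the boundedness of $\Omega$ is used in an essential way. (Compare with the Funk metric $F_{\Omega}(x,y)$, for which, as $y$ approaches $\partial\Omega$ along a segment from $x$, the corresponding boundary point stays fixed and only the denominator $|y-a|$ tends to zero, giving unboundedness; in the reverse direction the roles of numerator and denominator are exchanged and the denominator is bounded away from $0$ by $\operatorname{dist}(x,\partial\Omega)$.)
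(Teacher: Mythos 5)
Your proof is correct and follows essentially the same route as the paper: the paper bounds $|y-b|$ above by the Euclidean diameter $\delta$ of $\Omega$ and $|x-b|$ below by $\lambda_x=\inf_{b\in\partial\Omega}|x-b|=\dist(x,\partial\Omega)$, arriving at the same bound $\log\bigl(\delta/\lambda_x\bigr)$. Nothing further is needed.
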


\begin{proof}  Define $\lambda_x$ and $\delta$ by
$$
 \lambda_x = \inf_{b \in\partial\Omega } |x-b|, \quad and \quad 
 \delta = \sup _{a,b \in\partial\Omega } |a-b|.
$$
Observe that $\delta$ is the Euclidean diameter of  $\Omega$, thus $\delta < \infty$ since 
 $\Omega$ is bounded. We also have $\lambda_x  > 0$. The proposition follows from the
 inequality
$$
  {}^rF_{\Omega}(x,y) \leq \log \left(\frac{\delta}{\lambda_x} \right).
$$
\end{proof}

 \medskip
 
In particular the reverse Funk metric  ${}^rF_{\Omega}$ is   not bi-Lipschitz equivalent to  the Funk metric $F_{\Omega}$.  

\section{Examples} \label{ex}
 
 \begin{example}[Polytopes] \label{example.polytope}
An  (open) \emph{convex polytope}\index{convex polytope} in $\mathbb{R}^n$ is defined to be an intersection 
of finitely many half-spaces:
$$
 \Omega = \{ x \in \r^n \mid \phi_j(x) < s_j, \  1\leq j \leq k \}, 
$$
where   $\phi_j : \r^n \to \r$ is a nontrivial linear form for all $j$. 
The Funk distance between two points in such a polytope is given by
$$
 F_{\Omega}(x,y) = \max \left\{0,  \max_{1\leq j \leq k}  \log \left(\frac{s_j-\phi_j(x)}{s_j-\phi_j(y)}\right)\right\}.
$$  
The proof is similar to that of Corollary  \ref{cor.dFh2}.   As a special case, let us mention
that the Funk metric in $\r_+^n$ is given by
$$
   F_{\r^n_+}(x,y) = \max_{1 \leq i \leq n}\max \left\{ 0,\log \frac{x_i}{y_i}  \right\}.
$$
Observe that the map $x = (x_i) \mapsto u = (u_i)$, where $u_i = \log (x_i)$, is an isometry from 
the space  $(\r_+^n,F_{\r^n_+}) $   to $\r^n$
with the weak Minkowski distance 
$$
  \delta (u,v) = \max_{1 \leq i \leq n}\max \left\{ 0, u_i-v_i  \right\}.
$$
 \end{example}

 \begin{example}[The Euclidean unit ball] 
 The following is a formula for the Funk metric in the Euclidean unit ball  $B \subset \mathbb{R}^n$:
\begin{equation}\label{eq.FunkinBall}
   F_{B}(x,y)=  \log \left(
   \frac{\sqrt{ |y-x|^2 - |x\wedge y|^2} +  \vert x\vert^2  - 
   \langle x,y\rangle}{\sqrt{ |y-x|^2 - |x\wedge y|^2} -  \vert y\vert^2 + \langle x,y\rangle}
   \right),
\end{equation}
where $|x\wedge y|=\sqrt{ |x|^2|y|^2 -  \langle x,y\rangle^2}$ is the area of the parallelogram with sides 
$\overrightarrow{0x}, \overrightarrow{0y}$.
\end{example}

\begin{center}
\begin{tikzpicture}
   \draw (0,0) circle (1.8cm);
    \filldraw[lightgray] (-0.8,1) -- (0.6,1 ) -- (0,0)  -- cycle;
    \put(0,0){\circle*{2}} ;
  \draw (-0.8,1) -- (1.43,1 );   
 \filldraw   (1.5,1) circle  (0.03cm)  ;
 \filldraw   (-0.8,1) circle  (0.03cm)  ; 
 \filldraw   (0.6,1) circle  (0.03cm)  ;  
 \draw [->] (0,0) -- (1.5,0); 
 \draw [->] (0,0) -- (0,1); 
  \draw [->] (0,0) --  (1.5,1 )  ;
  \draw (1.6,1.2) node {\small{$a$}};
  \draw (-0.8,1.2) node  {\small{$x$}};
  \draw (0.6,1.2) node  {\small{$y$}}; 
  \draw (0,-0.2 ) node  {\small{$0$}}; 
  \draw (-0.2,0.6) node  {\small{$a_2$}};   
  \draw (1.2,-0.2) node  {\small{$a_1$}};     
  \end{tikzpicture}
\end{center}
\begin{proof}
 If $x=y$, there is nothing to prove, so we assume that $x\neq y$.
Let us set $a= a_B(x,y) = R(x,y) \cap \partial B$. 
Using   Proposition \ref{prop.dFh} with the linear form $h(z)= \langle y-x,z\rangle$ 
we get
$$
   F_{B}(x,y) =  \log \left(
   \frac{\langle y-x,a\rangle - \langle y-x,x\rangle}{\langle y-x,a\rangle - \langle y-x,y\rangle} \right)
 =  \log \left(
   \frac{\langle y-x,a\rangle  +  \vert x\vert^2  - \langle x,y\rangle}{ \langle y-x,a\rangle  -  \vert y\vert^2  + \langle x,y\rangle} \right).  
$$
So we just need to compute $\langle y-x,a\rangle$. This is an exercise in elementary Euclidean geometry.
Let us set  $u = \frac{y-x}{|y-x|}$ and 
$$
  a_1 = \langle u,a\rangle u, \qquad  a_2 = a-a_1.
$$ 
Then $a=a_1+a_2$ and 
$a_1$ is a multiple of $y-x$  while  $a_2$ is the orthogonal projection of the origin $O$ of $\mathbb{R}^n$ on the line through $x$ and $y$.
In particular the height of the triangle $Oxy$ is equal to $|a_2|$, therefore
$$
 \mbox{Area}(Oxy) = \frac 12 |x\wedge y| =  \frac 12 |a_2| \cdot |y-x|.
$$ 
Observe now  that $\langle u,a\rangle   >0$ and $|a|^2 =  |a_1|^2 +  |a_2|^2 =1$, we thus have
\begin{eqnarray*} 
  \langle y-x,a\rangle^2 & = & |y-x|^2 \cdot  \langle u,a\rangle^2 
  \\ & = & |y-x|^2 \cdot  |a_1|^2   \\ & = &   |y-x|^2 \cdot (1- |a_2|^2)
  \\ & = &   |y-x|^2 - |x\wedge y|^2
\end{eqnarray*}
The desired formula  follows immediately. \end{proof}

\section{The geometry of  balls in the Funk metric}\label{ref:Balls}

Since we are dealing with non-symmetric distances, we need to distinguish between \emph{forward} and \emph{backward}  balls.
For a point  $x$ in $\Omega$ and $\rho>0$, we set
\begin{equation} \label{eq:right}
B^+(x,\rho)= \{y\in B \mid F_\Omega(x,y)< \rho\}
\end{equation} and we call it the   \emph{forward open ball}\index{forward open ball}\index{ball!forward}
(or \emph{right open ball}\index{right open ball}\index{ball!right}) centered at $x$ of radius $\rho$.
In a symmetric way, we set
\begin{equation} \label{eq:left}
B^-(x,\rho)= \{y\in B  \mid F_\Omega(y,x)< \rho\}
\end{equation}
and we call it the \emph{backward open ball}\index{backward open ball}\index{ball!backward}
(also called the \emph{left open ball}\index{left open ball}\index{ball!left}) centered at $x$ of radius $\rho$.  

Note that the open backward balls of the Funk metric are the open forward balls of the reverse Funk metric,
and vice versa.
 
We define closed  {forward} and closed  {backward} balls  by replacing the inequalities in (\ref{eq:right}) and (\ref{eq:left}) by non strict inequalities,
 and in the same way we define   {forward} and  {backward}  spheres\index{forward sphere}\index{sphere!forward}\index{backward sphere}\index{sphere!backward} by replacing the inequalities by equalities.
In Funk geometry, the backward and forward balls have in general quite different shapes and different properties.

\begin{proposition}\label{prop:balls-homothetic}  
Let $\Omega$ be a proper convex open subset of $\mathbb{R}^n$ equipped with its Funk metric $F_{\Omega}$,  let $x$ be a point in $\Omega$ and let $\rho$ be a nonnegative real number. We have: \\
$\bullet $  ÊThe forward open ball $B^+(x,\rho) $ is the image of $\Omega$ by the Euclidean homothety   of center $x$ and dilation factor $(1-e^{-\rho})$. \\
$\bullet $ ÊThe  backward   open ball $B^-(x,\rho)$ is the intersection of $\Omega$ with the   image of $\Omega$ by the Euclidean homothety of center $x$ and dilation factor $(e^{\rho}-1)$, followed by the Euclidean central symmetry centered at $x$. 
 \end{proposition}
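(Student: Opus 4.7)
The plan is to analyze each ball direction by direction: fix a ray from $x$ and describe the set of points on that ray which satisfy the metric inequality, then check that this matches the homothety image restricted to the same ray.

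For the forward ball, take $y \in \Omega \setminus \{x\}$ and let $a = a_\Omega(x,y) \in \tilde\partial\Omega$. If $a \in H_\infty$, then by Definition~\ref{def:Funk} one has $F_\Omega(x,y) = 0 < \rho$, so the whole ray from $x$ through $y$ (which lies in $\Omega$) belongs both to $B^+(x,\rho)$ and to the image of $\Omega$ under the homothety $H_\lambda : z \mapsto x + \lambda(z-x)$ with $\lambda = 1 - e^{-\rho}$. If $a \in \mathbb{R}^n$, I would parameterize $y = x + t(a-x)$ with $t \in (0,1)$, so that $|x-a| = |a-x|$ and $|y-a| = (1-t)|a-x|$; the definition of $F_\Omega$ then gives $F_\Omega(x,y) = \log\bigl(1/(1-t)\bigr)$, and $F_\Omega(x,y) < \rho$ is equivalent to $t < 1 - e^{-\rho} = \lambda$. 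Conversely, every $w = H_\lambda(z)$ with $z \in \Omega$ lies on some segment $[x,a) \subset \overline{\Omega}$ at parameter $t = \lambda s$ for some $s \in [0,1)$, so $w \in B^+(x,\rho)$. This shows $B^+(x,\rho) = H_\lambda(\Omega)$.

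For the backward ball I would argue along the ray $R(y,x)$ passing from $y$ through $x$ to the boundary point $b = a_\Omega(y,x)$. Setting $r = |x-b|$ when $b$ is finite, one has $|y-b| = r + |x-y|$, so $F_\Omega(y,x) = \log\bigl(1 + |x-y|/r\bigr)$, and the condition $F_\Omega(y,x) < \rho$ becomes $|x-y| < (e^\rho - 1)\,r$. On the other hand, denoting by $\sigma$ the central symmetry at $x$ and by $H_\mu$ the homothety of center $x$ and factor $\mu = e^\rho - 1$, a point $y \neq x$ lies in $\sigma\bigl(H_\mu(\Omega)\bigr)$ if and only if $z := x + (x-y)/\mu$ belongs to $\Omega$. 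Since $z$ lies on the ray from $x$ in the direction $(x-y)/|x-y|$ at Euclidean distance $|x-y|/\mu$ from $x$, and the first boundary point in that direction is $b$ at distance $r$, the membership $z \in \Omega$ is equivalent to $|x-y| < \mu r$, exactly the same condition. The case $b \in H_\infty$ is handled uniformly: the whole ray from $x$ in direction $(x-y)$ lies in $\Omega$, so $z \in \Omega$ for every $\mu > 0$, and simultaneously $F_\Omega(y,x) = 0 < \rho$. Intersecting with $\Omega$ (since by definition $B^-(x,\rho) \subset \Omega$, whereas $\sigma(H_\mu(\Omega))$ generally is not), we obtain the claimed formula.

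The only mildly delicate point is the unbounded case, where one must verify that the conventions of Definition~\ref{def:Funk} and Proposition~\ref{prop.basicproperties} are compatible with the homothety description when the relevant boundary point escapes to $H_\infty$; both sides then contain the entire ray, so there is no actual obstruction. I expect the main bookkeeping challenge to be, for the backward ball, the need to remember that the homothety factor is $e^\rho - 1$ (not $1 - e^{-\rho}$ as for the forward ball) and to justify why the extra intersection with $\Omega$ is genuinely needed, namely because $H_{e^\rho - 1}(\Omega)$ inflates $\Omega$ by a factor $> 0$ that may exceed $1$, so its image under $\sigma$ typically protrudes outside $\Omega$.
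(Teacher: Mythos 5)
Your proof is correct and follows essentially the same route as the paper: a ray-by-ray reduction in which the defining inequality $F_\Omega(x,y)<\rho$ (resp.\ $F_\Omega(y,x)<\rho$) is rewritten as $|y-x|<(1-e^{-\rho})|x-a|$ (resp.\ $|y-x|<(e^{\rho}-1)|x-b|$) and then matched against the homothety description. Your explicit treatment of the case where the relevant boundary point lies in $H_\infty$ and of the converse inclusion is a slight elaboration of details the paper leaves implicit, not a different argument.
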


 \begin{center}
    \vspace{1cm}  \begin{picture}(100,100)    \centering 
         \includegraphics[width=.40\linewidth]{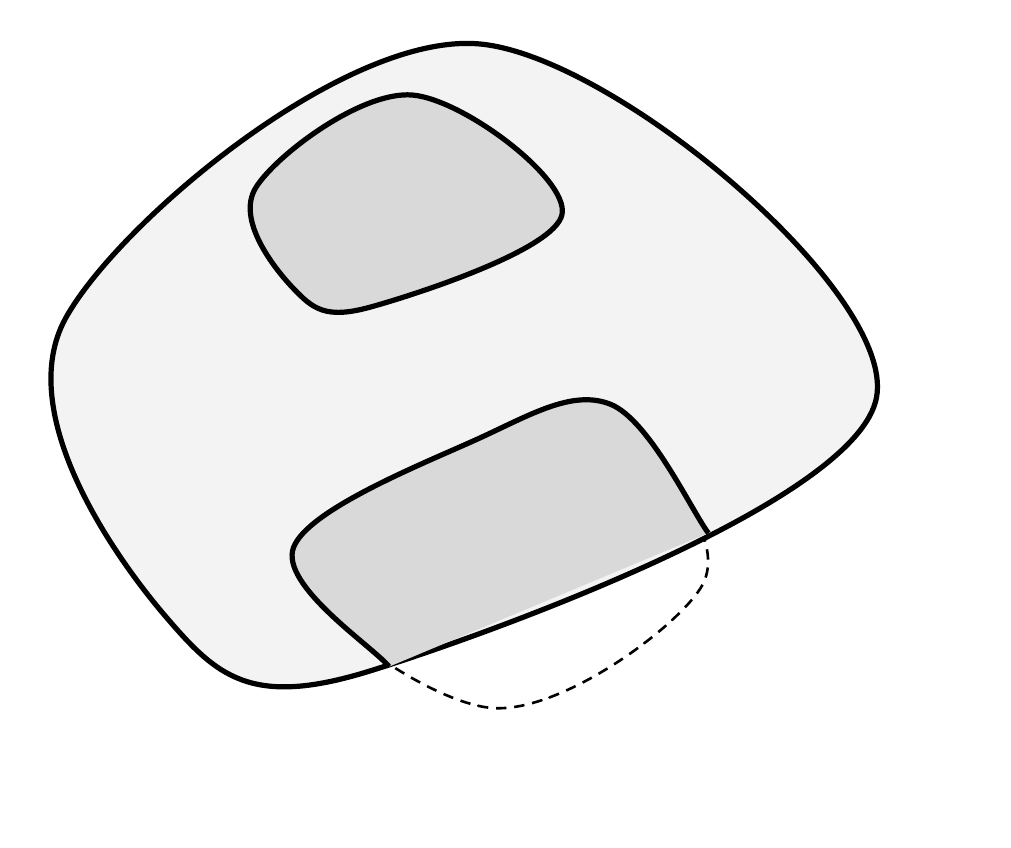}  
     \put(-22,45){$\Omega$ } 
   \put(-84,94){\circle*{3}}      
      \put(-65,47){\circle*{3}}      
   \end{picture}  
       \begin{minipage}[h]{0.85\textwidth}  
{\small {A forward and a  backward ball in Funk geometry. The forward ball is always relatively 
compact in $\Omega$, while the closure of the backward ball may meet the boundary $\partial \Omega$ if its radius is large enough.}}
 \end{minipage}
 \end{center}

  \begin{proof}
Let $y\neq x$ be a point in $\Omega$. If $F_{\Omega}(x,y)  = 0$, then the 
ray $R(x,y)$ is contained in $\Omega$, and for any $z$ on that ray, we have $F_{\Omega}(x,z)=0$.
Therefore the  ray is also contained in $B(x,\rho)$.  
If  $F_{\Omega}(x,y)  = 0$, then $a = a_{\Omega}(x,y) \neq H_{\infty}$ and we have the  following equivalent conditions for any point $y$ on the segment $[x,a]$:  
 \begin{eqnarray*}
 y\in B^+(x,\rho) & \Leftrightarrow &  \log\frac{\vert x-a\vert}{\vert y-a\vert} < \rho
 \\ & \Leftrightarrow & 
 \vert x-a\vert <  e^{ \rho} \vert y-a\vert
= e^{ \rho}( \vert x-a\vert- \vert y-x\vert )
 \\ & \Leftrightarrow &  \vert y-x\vert < (1-e^{-\rho}) \vert x-a\vert. 
\end{eqnarray*} 
This proves the  first statement. The proof of the second statement is similar,
let us set $b = a_{\Omega}(y,x)$, then   for any point $y \in [x,a]$ we have
$x \in [b,y]$, therefore
 \begin{eqnarray*}
 y\in B^-(x,\rho) & \Leftrightarrow &  \log\frac{\vert y-b\vert}{\vert x-b\vert} < \rho
 \\ & \Leftrightarrow & 
 e^{ \rho} \vert x-b\vert  >  \vert y-b\vert =  \vert y-x\vert +  \vert x-b\vert
  \\ & \Leftrightarrow & 
   (e^{ \rho}-1) \vert x-b\vert  >   \vert y-x\vert .
   \end{eqnarray*} 
\end{proof}

 Thus, for instance, if $\Omega$ is the interior a  Euclidean ball in $\mathbb{R}^n$, then any  forward ball for the 
Funk metric $B^+(x_0,\delta)$ is also a Euclidean ball. However, its Euclidean center  is not the center for the Funk metric (unless $x_0$ is the center of $\Omega$).
Considering  Example 4.2,  if  $\Omega$ is the Euclidean unit ball and 
$B^+(x_0,\rho ) \subset \Omega$ is  the Funk ball off radius $\rho  $ and center $x_0$ in $\Omega$, then $y \in B^+(x_0,\rho) $ if and only if
$F_{\Omega}(x_0,y) \leq \rho $. Using   Formula (\ref{eq.FunkinBall}), we compute that this  is equivalent to
$$
  \|y\|^2 - 2e^{-\rho}\langle y,x_0\rangle + e^{-2\rho  }\|x_0\|^2 \leq  (1-e^{-\rho})^2.
$$
This set describes  a Euclidean ball with center $z_0 = e^{-\rho  }x_0$ and Euclidean radius $r= (1-e^{-\rho})$
 
 \smallskip
 We deduce the following ``local-implies-global" property of Funk metrics. The meaning of the statement is clear, and it follows directly from Proposition \ref{prop:balls-homothetic}.
  
  \begin{corollary} 
 We can reconstruct the boundary $\partial\Omega$ of $\Omega$ from the local geometry at any point of $\Omega$.
\end{corollary}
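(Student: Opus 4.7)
The plan is to exploit Proposition \ref{prop:balls-homothetic} directly: each forward ball centered at $x_0$ is an affine-scaled copy of $\Omega$, so any single forward ball (however small) already determines $\Omega$, and hence $\partial\Omega$.

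In detail, I would first fix a point $x_0 \in \Omega$ and assume that the Funk metric $F_{\Omega}$ is known on some open Euclidean neighborhood $U \subset \Omega$ of $x_0$ (this is what ``local geometry at a point'' should mean here). Since forward balls $B^+(x_0,\rho)$ are homothetic copies of $\Omega$ with dilation factor $(1-e^{-\rho})$, their Euclidean diameters are proportional to $(1-e^{-\rho})$ and shrink to $0$ as $\rho \to 0^+$. Consequently one can choose $\rho_0>0$ small enough that $B^+(x_0,\rho_0)\subset U$. The values of $F_{\Omega}$ restricted to $U$ suffice to identify the set $B^+(x_0,\rho_0)=\{y\in U \mid F_{\Omega}(x_0,y)<\rho_0\}$.

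Once this forward ball is known as a subset of $\mathbb{R}^n$, Proposition \ref{prop:balls-homothetic} gives the explicit formula
\[
B^+(x_0,\rho_0) \;=\; x_0 + (1-e^{-\rho_0})\bigl(\Omega - x_0\bigr).
\]
Applying the inverse Euclidean homothety, namely the map $z \mapsto x_0 + (1-e^{-\rho_0})^{-1}(z-x_0)$, to this ball recovers $\Omega$ entirely, and then $\partial\Omega$ is obtained as its topological frontier in $\mathbb{R}^n$ (or, for the point at infinity part, in $\mathbb{RP}^n$).

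There is essentially no hard step here: the content is already packaged inside Proposition \ref{prop:balls-homothetic}, and the corollary is just the observation that the homothety can be inverted. The only minor point to verify is that arbitrarily small forward balls sit inside any given Euclidean neighborhood of $x_0$, but this follows at once from the homothety formula (or, equivalently, from the fact that the Funk and Euclidean topologies agree, which is proved in \S\ref{s:balls}). If one wants to interpret ``local geometry'' more restrictively as the infinitesimal Finsler datum at $x_0$ alone, the same argument still works after integrating along Euclidean rays: the Funk length of a small ray segment from $x_0$ determines the Euclidean distance from $x_0$ to the boundary point $a_{\Omega}(x_0,y)$, which traces out $\partial\Omega$ as the direction varies.
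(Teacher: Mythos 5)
Your argument is exactly the one the paper intends: the authors state that the corollary ``follows directly from Proposition \ref{prop:balls-homothetic}'', and your write-up simply spells out that a single small forward ball is a homothetic copy of $\Omega$ whose homothety can be inverted. Correct, and the same approach as the paper, only more detailed.
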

    
\begin{corollary}
For any points $x$ and $x'$ in a convex domain $\Omega$ equipped with its Funk metric and for any two positive real numbers $\delta$ and $\delta'$, the forward balls $B^+(x,\delta)$ and $B^+(x',\delta')$ are either homothetic
or a translation of each other. 
 \end{corollary}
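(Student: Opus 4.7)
The plan is to exploit Proposition \ref{prop:balls-homothetic} directly: each forward ball $B^+(x,\delta)$ is by definition the image of the fixed set $\Omega$ under an explicit Euclidean homothety, so the relationship between any two forward balls is governed by the composition of two such homotheties.

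Concretely, let $h_{x,\delta}$ denote the Euclidean homothety with center $x$ and ratio $r = 1-e^{-\delta}$, so that $B^+(x,\delta) = h_{x,\delta}(\Omega)$, and similarly $B^+(x',\delta') = h_{x',\delta'}(\Omega)$ with ratio $r' = 1-e^{-\delta'}$. The natural candidate for a map sending one ball onto the other is then
\[
\varphi := h_{x',\delta'} \circ h_{x,\delta}^{-1},
\]
which satisfies $\varphi(B^+(x,\delta)) = B^+(x',\delta')$ by construction. Since $h_{x,\delta}^{-1}$ is itself a homothety (center $x$, ratio $1/r$), the map $\varphi$ is a composition of two Euclidean homotheties with ratios $1/r$ and $r'$, hence with overall ratio $r'/r$.

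A standard fact from affine geometry now finishes the argument: the composition of two homotheties of ratios $\lambda_1,\lambda_2$ is a homothety of ratio $\lambda_1\lambda_2$ when $\lambda_1\lambda_2 \neq 1$, and is a translation when $\lambda_1\lambda_2 = 1$. Applying this to $\varphi$, we conclude that if $\delta \neq \delta'$, then $r \neq r'$ and $\varphi$ is a homothety, so $B^+(x,\delta)$ and $B^+(x',\delta')$ are homothetic; while if $\delta = \delta'$, then $r = r'$ and $\varphi$ is a translation, so the two balls are translates of one another.

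There is no real obstacle here; the only point requiring care is the degenerate case $\delta = \delta'$ with $x = x'$, where $\varphi$ is the identity (regarded as the trivial translation), which is consistent with the statement. The proof is thus just a direct computation using Proposition \ref{prop:balls-homothetic} and the classical composition rule for homotheties.
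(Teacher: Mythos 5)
Your proof is correct and follows essentially the same route as the paper: both rest on Proposition \ref{prop:balls-homothetic} together with the fact that homotheties and translations form a group (the group of dilations), so that $h_{x',\delta'}\circ h_{x,\delta}^{-1}$ is again a homothety or a translation. You merely make explicit the composition rule (ratio $r'/r$, with the translation case occurring exactly when $r=r'$) that the paper leaves to the cited group-theoretic fact.
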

  
\begin{proof}
 This follows from Proposition \ref{prop:balls-homothetic} and the fact that the set of Euclidean transformations
 which are either homotheties or translations form a group (sometimes called the \emph{the group of
 dilations}, see e.g. \cite{Coxeter}).
\end{proof}

 \begin{remarks}
The previous Corollary also holds for backward balls  $B^-(x,\delta)$ of small enough radii. 
\end{remarks}

\begin{remarks}
$\bullet$  In the case where the convex set $\Omega$ is unbounded, its forward and  backward open balls of the Funk metric are always noncompact. 
\\ $\bullet$  If $\Omega$ is bounded. then for any  $x\in \Omega$ and for $\rho$ large enough we have
$B^-(x,\rho) = \Omega$. This follows from Proposition \ref{rfunk_unbounded}. In particular, the closed
backwards balls are not compact for large radii.
\\ $\bullet$ The forward open balls are geodesically convex if and only if $\Omega$ is strictly convex.
\end{remarks}

 \begin{remark} The property for a weak metric on a subset $\Omega$ of $\mathbb{R}^n$ to have all the right spheres homothetic is also shared by the Minkowski weak metrics on $\mathbb{R}^n$. Indeed, it is easy to see that in a Minkowski weak metric, any two right open balls are homothetic. (Any two right spheres of the same radius are translates of each other, and it is easy to see from the definition that any two spheres centered at the same point are homothetic, the center of the homothety being the center of the balls.) Thus, Minkowski weak metrics share with the Funk weak metrics the property stated in Proposition \ref{prop:balls-homothetic}. Busemann proved that in the setting of \emph{Desarguesian spaces}, these are  the only examples of spaces satisfying this property (see the definition of a Desarguesian space and the statement of this result in Chapter 1,  Section 6 \cite{PT-Minkowski} in this volume). We state this as the following:
 \end{remark}
  
\begin{theorem}[Busemann  \cite{Busemann-homothetic}]
 A Desarguesian space in which all the right spheres of positive radius around any point are homothetic is either a Funk space or a Minkowski space.
\end{theorem}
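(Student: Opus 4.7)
The plan is to use the Desarguesian hypothesis to model the space as an open convex subset $\Omega\subset\r^n$ with affine segments as geodesics, and then to exploit the homothety condition to derive a one-variable functional equation whose only continuous solutions correspond to the Funk or the Minkowski metric. First, invoking the definition of a Desarguesian space from Chapter 1, Section 6 of this volume, I would realize the given space as an open convex subset $\Omega\subset\r^n$ (sitting in an affine chart of $\mathbb{RP}^n$) in which affine segments are the $d$-geodesics. In particular the metric $d$ is projective in the sense of Proposition \ref{prop.basicproperties}(c).

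Next, I would fix $x\in\Omega$ and observe that the hypothesis supplies a Euclidean homothety between any two forward spheres $S^+(x,\rho)$ and $S^+(x,\rho')$. To locate its center I would use the pencil of affine rays through $x$: each such ray is a $d$-geodesic by projectivity, and it meets each $S^+(x,\rho)$ in a single point. The Desarguesian axiom then forces the homothety to preserve this pencil, so its center must be $x$. Writing the dilation ratio as $\lambda(\rho)>0$, I obtain
\[
S^+(x,\rho)=x+\lambda(\rho)\,C_x,
\]
where $C_x=S^+(x,1)-x$ is a fixed convex hypersurface. Letting $L=L(x,w)\in(0,\infty]$ denote the Euclidean length of $R(x,x+w)\cap\Omega$ in a direction $w\in C_x$, projective additivity along the ray yields the functional equation
\[
\lambda(s+t)=\lambda(s)+\bigl(L-\lambda(s)\bigr)\,\frac{\lambda(t)}{L},
\]
to be interpreted in the limit $L\to\infty$ as $\lambda(s+t)=\lambda(s)+\lambda(t)$. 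Its monotone continuous solutions are $\lambda(t)=L(1-e^{-t})$ when $L<\infty$ and $\lambda(t)=t$ when $L=\infty$, after rescaling the metric.

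The argument then splits according to $L$. If $L=\infty$ in every direction then $\Omega=\r^n$ and $\lambda$ is linear; the homothety condition applied to spheres of equal radius at distinct centers $x,x'$, both having dilation factor $\lambda(\rho)$, forces the homothety ratio to equal $1$, so $C_x$ and $C_{x'}$ are translates of one another. This is precisely the Minkowski condition. If $L(x,w)<\infty$ in some direction, then $\lambda(t)\to L(x,w)$ exponentially as $t\to\infty$ and the forward spheres $S^+(x,\rho)$ accumulate on $\partial\Omega$; Proposition \ref{prop:balls-homothetic} then identifies $d$ with the Funk metric of $\Omega$.

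The main obstacle I foresee is the step of pinning down the homothety center at $x$. Nothing in the bare hypothesis ``two spheres are homothetic'' indicates where the homothety is centered, so one must genuinely bring the Desarguesian axiom to bear in order to test the homothety against the geodesic pencil through $x$ and thereby locate its center. Once that is settled, the rest is a routine interplay between the one-dimensional functional equation and the classical description of Funk versus Minkowski balls.
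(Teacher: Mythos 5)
First, a point of comparison: the paper does not prove this theorem at all. It is stated as a quoted result of Busemann, with a pointer to his article \emph{Spaces with homothetic spheres} (J.\ Geometry 4, 1974), so there is no in-paper argument to measure your proposal against; it has to stand on its own. Your overall strategy (realize the Desarguesian space in an affine chart, show the spheres about a fixed point $x$ are dilates of a fixed convex hypersurface from center $x$, extract a one-variable functional equation along rays, and read off Funk versus Minkowski from whether the chord length $L$ is finite) is the right general shape, and you correctly flag the location of the homothety center as a danger point. But as written the argument has two genuine gaps beyond that one.

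The first is the functional equation itself. Projective additivity along the ray only gives you $d\bigl(x+\lambda(s)w,\,x+\lambda(s+t)w\bigr)=t$; to convert this into the Euclidean increment $\lambda(s+t)-\lambda(s)$ you must know the shape \emph{and scale} of the forward sphere of radius $t$ about the intermediate point $x+\lambda(s)w$, i.e.\ you must invoke the hypothesis that spheres about \emph{different} centers are homothetic, determine where those homotheties are centered, and determine how the dilation ratio varies with the base point. The specific form $\lambda(s+t)=\lambda(s)+\bigl(L-\lambda(s)\bigr)\lambda(t)/L$ already encodes the Funk answer (the local unit of length shrinks proportionally to the remaining distance to $\partial\Omega$); it does not follow from additivity alone, and deriving it is essentially the content of Busemann's theorem rather than a preliminary to it. The second gap is the Minkowski branch: you assert that for spheres of equal radius about distinct centers the homothety ratio must be $1$, hence the spheres are translates. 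Nothing you have established forces this; a priori the dilation function $\lambda$ and the profile $C_x$ could both depend on the base point, and ruling that out again requires the cross-center analysis you have not carried out. Until the homothety centers and ratios between spheres at different base points are pinned down, both halves of the dichotomy remain unproved.
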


\section{On the topology of the Funk metric} \label{s:balls}

\begin{proposition}\label{prop.topology}
The topology induced by the Funk or reverse Funk metric in a bounded convex domain 
 $\Omega$  in  $\mathbb{R}^n$ coincides 
with the Euclidean topology in that domain.
 \end{proposition}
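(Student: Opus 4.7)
The plan is to show that at every point $x \in \Omega$, the families of forward (resp.\ backward) Funk balls and the family of Euclidean balls form equivalent neighborhood bases. Since the forward balls of the reverse Funk metric are the backward balls of the Funk metric and vice versa, handling both types of Funk balls simultaneously will cover both statements of the proposition.

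The key input is Proposition \ref{prop:balls-homothetic}, which gives the forward and backward Funk balls as explicit Euclidean images of $\Omega$. Fix $x \in \Omega$. Because $\Omega$ is bounded and open in $\r^n$, there exist $0 < r_0 \leq R < \infty$ such that
$$
   B_{\mathrm{Eucl}}(x,r_0) \;\subset\; \Omega \;\subset\; B_{\mathrm{Eucl}}(x,R).
$$
Applying the homothety of center $x$ and factor $(1-\mathrm{e}^{-\rho})$ to this double inclusion gives, by Proposition \ref{prop:balls-homothetic},
$$
   B_{\mathrm{Eucl}}\bigl(x,(1-\mathrm{e}^{-\rho})r_0\bigr) \;\subset\; B^+(x,\rho) \;\subset\; B_{\mathrm{Eucl}}\bigl(x,(1-\mathrm{e}^{-\rho})R\bigr).
$$
Since $(1-\mathrm{e}^{-\rho}) \to 0$ as $\rho\to 0$ and is positive for $\rho>0$, this sandwich immediately yields both continuity statements at $x$: every Euclidean ball around $x$ contains some $B^+(x,\rho)$, and every $B^+(x,\rho)$ contains some Euclidean ball around $x$.

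For backward balls, the corresponding set is $B^-(x,\rho) = \Omega \cap \phi(\Omega)$, where $\phi$ is the homothety of center $x$ and factor $-(\mathrm{e}^{\rho}-1)$. The outer inclusion $\phi(\Omega) \subset B_{\mathrm{Eucl}}(x,(\mathrm{e}^{\rho}-1)R)$ gives $B^-(x,\rho) \subset B_{\mathrm{Eucl}}(x,(\mathrm{e}^{\rho}-1)R)$. For the inner inclusion, restrict to $\rho \leq \log 2$ so that $(\mathrm{e}^{\rho}-1)r_0 \leq r_0$; then $B_{\mathrm{Eucl}}(x,(\mathrm{e}^{\rho}-1)r_0)$ lies in both $\Omega$ (since it sits inside $B_{\mathrm{Eucl}}(x,r_0)$) and in $\phi(\Omega)$ (as the image of $B_{\mathrm{Eucl}}(x,r_0)$ under $\phi$), hence in $B^-(x,\rho)$. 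Letting $\rho\to 0^+$ collapses the sandwich to $\{x\}$, proving the equivalence of the backward-Funk and Euclidean neighborhood systems at $x$. The reverse Funk metric statement is then immediate, since its forward (resp.\ backward) balls coincide with the Funk metric's backward (resp.\ forward) balls.

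There is no real obstacle here; the argument is essentially a one-line consequence of the homothety description of balls. The only mild point to be careful about is the backward case, where one must use boundedness of $\Omega$ to bound $\phi(\Omega)$ from the outside and the restriction $\rho \leq \log 2$ to ensure the candidate Euclidean ball also lies inside $\Omega$ itself.
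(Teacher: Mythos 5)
Your argument is correct and is essentially the paper's own proof: both fix $x$, choose radii $r_0,R$ (the paper's $\lambda_x,\Lambda_x$) with $B_{\mathrm{Eucl}}(x,r_0)\subset\Omega\subset B_{\mathrm{Eucl}}(x,R)$, and push this sandwich through the homothety description of Proposition \ref{prop:balls-homothetic} to trap $B^{\pm}(x,\rho)$ between Euclidean balls of radii proportional to $(1-\mathrm{e}^{-\rho})$, resp.\ $(\mathrm{e}^{\rho}-1)$, with the same restriction $\rho\le\log 2$ in the backward case. No differences worth noting.
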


\begin{proof}
The proof consists in comparing the balls in the Euclidean and the Funk (or reverse Funk)
geometries. Let us fix a point $x$ in $\Omega$. Then there exists $0 < \lambda_x \leq  \Lambda_x <\infty$
such that for any $\xi \in \partial\Omega$ we have
$$
 \lambda_x \leq  |\xi-x| \leq \Lambda_x.
$$
If we denote by $B^+(x,\rho)$ the forward ball with center $x$ and radius $\rho$ in the Funk metric, then 
Proposition  \ref{prop:balls-homothetic} implies that 
$$
  y \in \partial B^+(x,\rho) \  \Rightarrow \    (1-\mathrm{e}^{-\rho})\lambda_x \leq  |y-x| \leq (1-\mathrm{e}^{-\rho}) \Lambda_x. 
$$
In other words, if \  ${}^\mathrm{E}B(x, \delta)$ denotes the Euclidean ball  with center $x$ and radius $\delta$, then 
\begin{equation}
\label{eq.compareballs}  
 {}^\mathrm{E}B(x,(1-\mathrm{e}^{-\rho}) \lambda_x )  \subset B^+(x,\rho) \subset  {}^\mathrm{E}B(x,(1-\mathrm{e}^{-\rho})\Lambda_x ). 
\end{equation} 
This implies that the families of balls $B^+(x,\rho)$ and   ${}^\mathrm{E}B(x, \delta)$ are sub-bases for the same topology.

For the backward balls $B^-(x,\rho)$, the second part of Proposition  \ref{prop:balls-homothetic} implies the following
$$
  y \in \partial B^-(x,\rho) \  \Rightarrow \   (e^{\rho}-1)\lambda_x \leq  |y-x| \leq  (e^{\rho}-1)\Lambda_x,
$$
provided $(e^{\rho}-1) \leq 1$. This implies that for $\rho \leq \log (2)$ we have
$$
 {}^\mathrm{E}B(x,(e^{\rho}-1)\lambda_x )  \subset B^-(x,\rho) \subset  {}^\mathrm{E}B(x,(e^{\rho}-1)\Lambda_x ), 
$$
and therefore the family of backward balls $B^-(x,\rho)$ also generates the Euclidean topology.
\end{proof}

For general convex domains, bounded or not, we have the following weaker result on the topology: 

\begin{proposition}\label{eq:conv-F} For any  convex domain $\Omega$ in $\mathbb{R}^n$,
 $ F_{\Omega}$ is a continuous functions on $\Omega\times \Omega$.
 \end{proposition}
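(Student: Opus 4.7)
The plan is to express the Funk metric through translate-based Minkowski functionals and exploit their joint continuity. For each $x\in \Omega$, the translate $\Omega-x$ is an open convex set containing the origin; write $\mathfrak{p}_x$ for its Minkowski functional. A direct unwinding of Definition~\ref{def:Funk} gives
$$
 F_{\Omega}(x,y) \;=\; -\log\bigl(1 - \mathfrak{p}_x(y-x)\bigr)
$$
for all $x,y\in\Omega$. Indeed, when $\mathfrak{p}_x(y-x)=p>0$, the exit point $a=a_{\Omega}(x,y)$ equals $x+(y-x)/p$, yielding $|x-a|/|y-a|=1/(1-p)$; when $p=0$, the ray $R(x,y)$ lies in $\Omega$, so $a\in H_{\infty}$ and both sides vanish. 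Note that $y\in\Omega$ forces $\mathfrak{p}_x(y-x)<1$, so the right-hand side is always finite.

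The heart of the argument is to show that the map $(x,v)\mapsto\mathfrak{p}_x(v)$ is jointly continuous at every pair $(x_0,v_0)$ with $x_0\in \Omega$ and $\mathfrak{p}_{x_0}(v_0)<1$. Upper semicontinuity follows from the openness of $\Omega$: given $\varepsilon>0$, the point $x_0+v_0/((1+\varepsilon)p_0)$ (with $p_0=\mathfrak{p}_{x_0}(v_0)$, replaced by $x_0+Mv_0$ for arbitrarily large $M$ when $p_0=0$) lies in $\Omega$, hence so does the perturbation $x+v/((1+\varepsilon)p_0)$ for $(x,v)$ in some Euclidean neighborhood of $(x_0,v_0)$, bounding $\mathfrak{p}_x(v)$ from above. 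Lower semicontinuity goes by contradiction: if $\mathfrak{p}_{x_n}(v_n)\to L<p_0$, choose $L<t<p_0$, observe $x_n+v_n/t\in \Omega$ for $n$ large, and pass to the Euclidean limit to get $x_0+v_0/t\in\overline{\Omega}$; by convexity of $\Omega-x_0$ this forces $\mathfrak{p}_{x_0}(v_0)\leq t$, contradicting $t<p_0$. Composing with the continuous function $r\mapsto -\log(1-r)$ on $[0,1)$ then gives continuity of $F_{\Omega}$ at every $(x_0,y_0)\in\Omega\times\Omega$ with $x_0\neq y_0$.

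Continuity on the diagonal $(x_0,x_0)$ is a separate but easier point. Since $\Omega$ is open, there is a Euclidean neighborhood $U$ of $x_0$ and a radius $r>0$ such that the Euclidean ball of radius $r$ around each $x\in U$ lies in $\Omega$; this yields the uniform bound $\mathfrak{p}_x(v)\leq |v|/r$ for $x\in U$, so $F_{\Omega}(x_n,y_n)=-\log(1-\mathfrak{p}_{x_n}(y_n-x_n))\to 0$ whenever $(x_n,y_n)\to(x_0,x_0)$.

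The main obstacle to circumvent is the non-compactness intrinsic to the unbounded setting: when $\Omega$ is unbounded, the exit point $a_{\Omega}(x,y)$ may lie on the hyperplane at infinity $H_{\infty}$, and a direct continuity analysis of the original formula $\log(|x-a|/|y-a|)$ would require tracking limits in $\mathbb{RP}^n$ with a case split between finite and infinite exit points. The Minkowski functional encodes both regimes in a single nonnegative real-valued function (with $\mathfrak{p}_x(v)=0$ corresponding exactly to $a_{\Omega}(x,y)\in H_{\infty}$), and this uniform encoding is what lets the continuity argument run cleanly in all cases.
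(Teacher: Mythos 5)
Your proof is correct, but it takes a genuinely different route from the paper. The paper argues directly on the defining formula: for a \emph{bounded} domain and $x\neq y$ it asserts that the exit points $a_{\Omega}(x_n,y_n)$ converge to $a_{\Omega}(x,y)$ and passes to the limit in $\log\bigl(|x_n-a_n|/|y_n-a_n|\bigr)$; on the diagonal it uses the bound $F_{\Omega}(x_n,y_n)\leq \log\bigl(1+|x_n-y_n|/|y_n-a_n|\bigr)$ together with a lower bound on the distance to the boundary; and it reduces the unbounded case to the bounded one by exhausting $\Omega$ with the truncations $\Omega\cap {}^{\mathrm{E}}B(x,R)$ and invoking locally uniform convergence of $F_{\Omega_R}$ to $F_{\Omega}$. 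Your reformulation $F_{\Omega}(x,y)=-\log\bigl(1-\mathfrak{p}_x(y-x)\bigr)$ via the gauge of the translate $\Omega-x$ is correct (the computation $|x-a|/|y-a|=1/(1-p)$ checks out, and $p=0$ matches the convention $a\in H_{\infty}$), and your semicontinuity argument for $(x,v)\mapsto\mathfrak{p}_x(v)$ is sound; the one place worth a word more is the lower-semicontinuity step, where passing from $x_0+v_0/t\in\overline{\Omega}$ to $\mathfrak{p}_{x_0}(v_0)\leq t$ uses the standard fact that for an open convex set containing the origin the closure is exactly $\{\mathfrak{p}\leq 1\}$. What your approach buys is that it treats bounded and unbounded domains uniformly, with no case split on whether the exit point is at infinity and no approximation argument; it also supplies, in effect, the proof of the continuity of the exit-point map that the paper's bounded case takes for granted. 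What the paper's approach buys is brevity and a direct appeal to the geometric definition. Both handle the diagonal by essentially the same uniform estimate.
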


\begin{proof} We first consider the case where $\Omega$ is bounded. 
Suppose first that $x$  and $y$ are distinct points in $\Omega$ and let $x_n,y_n$ be sequences in $\Omega$ converging to $x$ and $y$ respectively. 
Taking subsequences if necessary, we may assume that $x_n \neq y_n$ for all 
$n$. Then $a_n =a_{\Omega} (x_n,y_n)$ is well defined and this sequence converges to $a =a_{\Omega}(x,y)$. Since  
$a \neq y$  we have
\begin{eqnarray*}
 \lim_{n \to \infty} F_{\Omega}(x_n,y_n)  &=&  \lim_{n \to \infty}  \log\left(\frac{|x_n-a_n|}{|y_n -a_n|} \right)
 \\ &=&
 \log  \left(\frac{|x-a|}{|y-a|} \right)  \\ &=& F_{\Omega}(x,y).
\end{eqnarray*}
 Assume now that $x= y$ and let  $x_n,y_n \in \Omega$ be sequences converging to $x$ such that 
 $x_n \neq y_n$ for all $n$. We have
 $$
   F_{\Omega}(x_n,y_n) =  \log\left(\frac{|x_n-a_n|}{|y_n -a_n|} \right)
  $$
  $$= \log\left(\frac{|(y_n-a_n) + (x_n-y_n)|}{|y_n -a_n|} \right)
 \leq  \log\left(1+\frac{|x_n-y_n|}{|y_n -a_n|} \right).
$$
Since $y_n \in \Omega$ converges to a point $x$ in $\Omega$, we have
$ \delta = \sup_{b \in \partial \Omega} |y_n -b|^{-1} <\infty$. We then have
$$
   F_{\Omega}(x_n,y_n)   \leq  \log\left(1+ \delta |x_n-y_n| \right) \to 0,  
$$
since $|x_n-y_n|\to 0$.

If  $\Omega$ is unbounded, we set  $\Omega_R = \Omega \cap   {}^\mathrm{E}B(x, R)$ where  ${}^\mathrm{E}B(x,R)$ 
is the Euclidean ball of radius $R$ centered at the origin. It is easy to check that 
$F_{\Omega_R}$  converges uniformly to $F_{\Omega}$ on every compact subset of 
$\Omega \times \Omega$ as $R \to \infty$. The continuity of  $F_{\Omega}$ follows therefore
from the proof  for bounded convex domains.
\end{proof}

For the next result we need some more definitions:
 
\begin{definition}
 Let $\delta$ be a weak metric defined on a set $X$. A sequence $\{x_k\}$  in $X$ is \emph{forward bounded}\index{metric!forward bounded}\index{forward bounded!metric}
 if 
 $$
  \sup\delta(x_k,x_m) < \infty
 $$
 where the supremum is taken over all pairs $k,m$ satisfying $m\geq k$.
 Note that this definition corresponds to the usual notion in the case of a  usual (symmetric) metric space.  
 We then say that the weak metric space $(X,\delta)$ is \emph{forward proper},\index{metric!forward proper}\index{forward proper metric}
 or \emph{forward boundedly compact}\index{metric!forward boundedly compact}\index{forward boundedly compact metric}
 if every  forward bounded sequence has a converging subsequence.  \\
 The sequence $\{x_k\}$ is  \emph{forward Cauchy}\index{forward Cauchy sequence}
 if
 $$
  \lim_{k\to \infty} \  \sup_{m \geq k}\delta(x_k,x_m) = 0.
 $$ 
 The weak metric space $(X,\delta)$ is \emph{forward complete} if every forward Cauchy
 sequence converges.  We define \emph{backward properness}\index{metric!backward proper}\index{backward proper metric}
 and \emph{backward completeness}\index{metric!backward complete}\index{backward complete metric} in a similar way.
 \end{definition}

\begin{proposition} \label{prop.Fcomplete}  
The Funk metric in a convex domain $\Omega \subset \r^n$ is forward proper (and in particular forward complete) if and only if  $\Omega$ is bounded. The Funk metric is never backward complete.
\end{proposition}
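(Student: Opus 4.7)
The plan is to split the statement into two implications for forward properness and the separate backward-incompleteness claim, using throughout the explicit description of forward Funk balls from Proposition \ref{prop:balls-homothetic} and the exit-point formula for $F_\Omega$.

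For the forward implication (bounded $\Omega$ implies forward properness), I would take a forward bounded sequence $\{x_k\}$ with $C = \sup_{k \le m} F_\Omega(x_k, x_m) < \infty$. Every $x_k$ then lies in $\overline{B^+(x_1, C)}$, which by Proposition \ref{prop:balls-homothetic} is the image of $\overline{\Omega}$ under the Euclidean homothety of center $x_1$ and ratio $(1 - e^{-C}) < 1$, and is therefore a compact subset of $\Omega$. A convergent subsequence exists in the Euclidean sense, hence also in the Funk sense by Proposition \ref{prop.topology}; forward completeness then follows automatically. For the converse, if $\Omega$ is unbounded then, as invoked in the proof of Proposition \ref{prop.basicproperties}(e), it contains an affine ray $x_0 + tv$, $t \ge 0$, and for $x_k = x_0 + kv$ the ray $R(x_k, x_m)$ is the tail of this original ray, so $a_\Omega(x_k, x_m) \in H_\infty$ and $F_\Omega(x_k, x_m) = 0$; the sequence is forward bounded, yet $|x_k| \to \infty$, so it admits no subsequence converging to a point of $\Omega$.

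For backward incompleteness, which is claimed for every proper convex $\Omega$, I would exhibit a backward Cauchy sequence with no backward limit. Fix any $y_0 \in \Omega$ and any finite boundary point $\xi \in \partial\Omega$ and set $x_k = y_0 + (1 - 1/k)(\xi - y_0)$, so that $x_k \to \xi$ in the Euclidean sense. Let $b = a_\Omega(\xi, y_0)$ be the opposite exit point of the line through $y_0$ and $\xi$ (possibly at infinity). For $m \ge k$ the ray $R(x_m, x_k)$ exits at $b$, so $F_\Omega(x_m, x_k) = \log(|x_m - b|/|x_k - b|)$ in the finite case (and is identically $0$ if $b \in H_\infty$); in both cases the distances $|x_m - b|$ and $|x_k - b|$ converge to $|\xi - b|$, proving that $\{x_k\}$ is backward Cauchy. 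On the other hand, for any prospective limit $x^* \in \Omega$ the exit point $a_\Omega(x^*, x_k)$ tends to $\xi$, so $|x_k - a_\Omega(x^*, x_k)| \to 0$ while $|x^* - a_\Omega(x^*, x_k)|$ stays bounded below by a positive constant; this forces $F_\Omega(x^*, x_k) \to +\infty$ and rules out any backward limit.

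The main point requiring care is what ``converging subsequence'' means in this non-separating weak metric setting: the argument above uses Euclidean convergence to a point of $\Omega$, which by Proposition \ref{prop.topology} coincides with Funk convergence whenever $\Omega$ is bounded. Beyond that, the only nontrivial ingredient is the continuity of the exit map $a_\Omega$ in its arguments, which is elementary from the definitions.
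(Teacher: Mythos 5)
Your proof is correct and follows essentially the same route as the paper: forward properness for bounded $\Omega$ via the compactness of closed forward balls (you use the homothety description of Proposition \ref{prop:balls-homothetic} directly where the paper invokes the ball inclusions (\ref{eq.compareballs}), which is the same content), a ray yielding a divergent forward bounded sequence in the unbounded case, and a sequence marching along a segment to a finite boundary point for backward incompleteness. The one place you genuinely improve on the paper is the last step: the paper's construction starts from a chord $[a,b]$ with \emph{both} endpoints in $\partial\Omega$, which need not exist for unbounded domains (e.g.\ a half-space, where every line meets $\partial\Omega$ in at most one finite point), and it merely asserts that the sequence ``has no limit in $\Omega$''; your version needs only one finite boundary point $\xi$, treats the case where the opposite exit point lies in $H_\infty$, and explicitly shows $F_\Omega(x^*,x_k)\to+\infty$ for every candidate limit $x^*$, which makes the non-convergence claim unambiguous.
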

 
\begin{proof} For a convex domain, the ball inclusions (\ref{eq.compareballs})
immediately imply that forward complete balls are relatively compact; this implies 
forward properness. If $\Omega$ is unbounded, then it contains a ray and such a
ray contains a divergent sequence $\{x_k\}$ such that
$F_{\Omega}(x_k,x_m) = 0$ for any $m \geq k$ therefore $F_{\Omega}$ is not 
complete. 

To prove that the Funk metric is never backward complete, we consider an affine
segment $[a,b] \in \r^n$ with $a \neq b$ and such that  
$ [a,b] \cap \partial \Omega = \{a,b\}$. Set  $x_k = b + \frac{1}{k} (a-b)$. If $m \geq k$,
then 
$$
  F_{\Omega}(x_k,x_m) = \log \frac{|x_m-a|}{|x_k-a|},
$$
which converges to $0$ as $k\to \infty$. Since the sequence $\{x_k\}$ has no limit in 
$\Omega$, we conclude that $F_{\Omega}$ is not backward complete. 
\end{proof}
 
\begin{remark}  \label{rem.Fcomplete}  
The previous proposition also says that in a bounded convex domain, the Funk metric is
forward complete and the reverse Funk metric is not.  
\end{remark}

\section{The Triangle inequality and geodesics} \label{sec.triangle}

In this section we prove the triangle inequality for the Funk metric and give a necessary and
sufficient condition for the equality case. We also describe all the geodesics of this metric.

\subsection{On the triangle inequality}
 
\begin{theorem}\label{th.triangle_ineq}
If $x$, $y$ and $z$ are three points in a proper convex domain $\Omega$,
then the triangle inequality 
\begin{equation} \label{ineq.triangle}
 F_{\Omega}(x,y) + F_{\Omega}(y,z) \geq  F_{\Omega}(x,z) 
\end{equation}
holds. Furthermore we have equality 
$
 F_{\Omega}(x,y) + F_{\Omega}(y,z) = F_{\Omega}(x,z) 
$
if and only if the three points
\begin{equation}\label{abcendpoints}
   a_{\Omega}(x,y), \, a_{\Omega}(y,z), \, a_{\Omega}(x,z) \in \tilde \partial \Omega 
\end{equation}
are aligned in $ \mathbb{RP}^n$.  
 \end{theorem}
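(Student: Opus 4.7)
The plan is to deduce the triangle inequality from Corollary \ref{cor.dFh1}, which already encodes the key monotonicity of the Funk distance along a linear supporting functional, and then to read off the equality case directly from the equality clause of that corollary. After a harmless translation I would assume $0 \in \Omega$.

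For the triangle inequality itself, I would first dispose of the trivial case $F_\Omega(x,z) = 0$ (where non-negativity of $F_\Omega$ suffices) and then, in the remaining case, choose a supporting functional $h$ at the point $a = a_\Omega(x,z) \in \r^n$, normalised so that $h(a) = 1$. By the equality clause of Corollary \ref{cor.dFh1} this $h$ satisfies $F_\Omega(x,z) = \log\frac{1-h(x)}{1-h(z)}$. Applying the inequality of the same corollary to the pairs $(x,y)$ and $(y,z)$ with this fixed $h$, the two resulting logarithms telescope and give $F_\Omega(x,y) + F_\Omega(y,z) \geq F_\Omega(x,z)$.

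For the ``only if'' direction of the equality statement, I would observe that equality in the above chain forces equality in each application of Corollary \ref{cor.dFh1}, which by the equality clause there means that both $a_\Omega(x,y)$ and $a_\Omega(y,z)$ lie in the supporting hyperplane $H = \{h=1\}$. Since $a_\Omega(x,z)$ already lies in $H$, and all three $a$-points lie in the affine $2$-plane $P$ spanned by $x, y, z$ (the case where $x,y,z$ are collinear being trivial), the three points lie in $P \cap H$. As $x \in P \setminus H$ prevents $P \subset H$, this intersection is a projective line and alignment follows.

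For the converse, given three collinear $a$-points on a projective line $\ell$, convexity of $\Omega$ together with the presence of three boundary points on $\ell$ forces $\ell \cap \Omega = \emptyset$, so Hahn--Banach produces a supporting hyperplane $H$ containing $\ell$, whose functional $h$ realises equality in Corollary \ref{cor.dFh1} for all three pairs simultaneously and yields the desired identity. The main obstacle I anticipate is the careful bookkeeping of degenerate sub-cases in which some $a_\Omega(\cdot,\cdot)$ lies at infinity or in which $h$ fails to separate one of the pairs, since the equality clause of Corollary \ref{cor.dFh1} then has to be read in its alternative form; a continuity argument based on Proposition \ref{eq:conv-F} (or a direct substitution using the convention $\infty/\infty = 1$) should close the gap.
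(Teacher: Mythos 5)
Your proposal follows essentially the same route as the paper's own proof: dispose of the case $F_{\Omega}(x,z)=0$, choose a supporting functional at $a_{\Omega}(x,z)$, telescope via Corollary \ref{cor.dFh1}, and read the equality case off the equality clause of that corollary (your two-plane-meets-hyperplane alignment argument is packaged in the paper as Lemma \ref{lem.pointfacealigned}, and the at-infinity degeneracies you flag are handled there as Cases 2--3, plus the observation that $a_{\Omega}(x,y)$ and $a_{\Omega}(y,z)$ cannot both be at infinity when $F_{\Omega}(x,z)>0$). The sub-cases you anticipate are real but are resolved exactly as you suggest, so there is no gap.
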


Before proving this theorem, let us first recall a few additional definitions from convex geometry: 
Let $\Omega \subset \r^n$ be a convex domain. Then it is known that its closure $\overline{\Omega}$ is also convex.  
A convex subset $D \subset \overline{\Omega}$ is a \emph{face}\index{face (of a convex set)} of $\overline{\Omega}$ if for any $x,y\in D$ and any $0<\lambda<1$
we have
$$
 (1-\lambda) x +  \lambda  y \in D   \  \Rightarrow \  [x,y] \subset D.
$$
The empty set and  $\overline{\Omega}$ are also considered to be faces. A face  $D \subset \overline{\Omega}$ is
called \emph{proper} if  $D \neq \overline{\Omega}$ and $D\not=\emptyset$.  A face $D$ is said to be \emph{exposed}\index{exposed face} if there
is a supporting  hyperplane $H$ for $\Omega$ such that $D = H\cap  \overline{\Omega}$. Recall that a support
hyperplane is a hyperplane $H$ that meets $\partial \Omega$ and $H\cap \Omega = \emptyset$.
It is easy to prove that every proper face is contained in an exposed face. In fact every maximal proper face is exposed.

\medskip

A point $x\in \partial \overline{\Omega}$ is an \emph{exposed}\index{exposed point} point of $\Omega$ if $ \{x\}$ is an exposed
face, that is, if there exists a hyperplane $H \subset \r^n$ such that $H \cap \overline{\Omega} = \{x\}$.
If $\Omega$ is bounded, then  $\overline{\Omega}$ is the closure of the convex hull of its exposed points (Straszewicz's Theorem).

\medskip

A point $x \in \overline{\Omega}$ is an \emph{extreme point} if 
$\overline{\Omega} \setminus \{x\}$ is still a convex set. Such a point belongs to the boundary $\partial \Omega$
and if $\Omega$ is bounded, then  $\overline{\Omega}$ is the convex hull of its extreme points (Krein-Milman's Theorem).\index{Theorem!Krein-Milman}
Every exposed point is an extreme point, but the converse does not hold in general. The following result immediately
follows from the definitions:

\begin{lemma}
The following are equivalent conditions for a convex domain $\Omega \subset \r^n$:
\begin{enumerate}[{\rm (i.)}]
  \item Every boundary point is a extreme point.
  \item Every boundary point is an exposed point.
  \item The boundary
$\partial \Omega$ does not contain any non-trivial segment. 
\end{enumerate}
\end{lemma}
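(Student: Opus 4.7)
The plan is to establish the cyclic chain (ii) $\Rightarrow$ (i) $\Rightarrow$ (iii) $\Rightarrow$ (ii), using only the definitions and facts recalled in the paragraphs preceding the lemma.

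First, (ii) $\Rightarrow$ (i) is immediate from the sentence quoted just before the statement: every exposed point is an extreme point, so if all boundary points are exposed they are in particular extreme.

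Next, for (i) $\Rightarrow$ (iii) I would argue by contraposition. Suppose $\partial\Omega$ contains a nontrivial segment $[p,q]$ with $p\neq q$, and let $m=\tfrac12(p+q)$ be its midpoint. Then $m\in\partial\Omega$, and the set $\overline{\Omega}\setminus\{m\}$ contains both endpoints $p$ and $q$ but not their midpoint, so it fails to be convex. Hence $m$ is a boundary point that is not extreme, contradicting (i).

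The principal step is (iii) $\Rightarrow$ (ii). Given $x\in\partial\Omega$, I would invoke the \emph{basic fact} recalled in the text, namely that every boundary point admits at least one supporting functional; this yields a (finite) support hyperplane $H$ with $x\in H\cap\overline{\Omega}$ and $H\cap\Omega=\emptyset$. The set $D=H\cap\overline{\Omega}$ is convex as an intersection of convex sets, contains $x$, and lies entirely in $\partial\Omega$ since it avoids $\Omega$. If $D$ contained a point $y\neq x$, then by convexity the segment $[x,y]$ would be a nontrivial segment inside $D\subset\partial\Omega$, contradicting (iii). Hence $D=\{x\}$, which is precisely the statement that $x$ is an exposed point.

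The only place where a non-formal input is needed is this last step, where one uses the existence of a supporting hyperplane at an arbitrary boundary point (a standard consequence of Hahn--Banach separation); the other two implications are purely definitional. So I do not expect any serious obstacle beyond isolating the right supporting hyperplane.
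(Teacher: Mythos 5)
Your proof is correct. The paper offers no argument at all here (it simply declares that the lemma ``immediately follows from the definitions''), and your cyclic chain (ii) $\Rightarrow$ (i) $\Rightarrow$ (iii) $\Rightarrow$ (ii) is exactly the standard argument the authors have in mind: the only nontrivial step is (iii) $\Rightarrow$ (ii), which you handle correctly by observing that $H\cap\overline{\Omega}$ is a convex subset of $\partial\Omega$ and hence reduces to $\{x\}$ when the boundary contains no segment.
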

If one of these conditions holds, then   $\Omega$ is said to be \emph{strictly convex}\index{strictly convex set}.
The following result will play an important role in the proof of Theorem \ref{th.triangle_ineq}:

\begin{lemma}\label{lem.pointfacealigned}
 Let $\Omega$ be bounded convex domain and $x,y,z$ three points in $\Omega$.
  Then the following are equivalent:
\begin{enumerate}[{\rm (a)}]
  \item There exists a proper face $D\subset \tilde\partial \Omega$ such that 
  $$
   a_{\Omega}(x,y), \, a_{\Omega}(y,z), \, a_{\Omega}(x,z) \in D.
  $$
  \item The three points $ a_{\Omega}(x,y), \, a_{\Omega}(y,z)$ and $a_{\Omega}(x,z)$
  are aligned in $\mathbb{RP}^n$.
  \end{enumerate} 
  \end{lemma}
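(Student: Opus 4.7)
The plan is to use the support hyperplanes of $\overline{\Omega}$ as the bridge between the two conditions. The two inputs I would rely on, both recalled just above the statement, are: every proper face of $\overline{\Omega}$ is contained in an exposed face, and every boundary point admits a supporting functional.

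For $(a)\Rightarrow(b)$, I would enlarge $D$ to an exposed face $D'=H\cap\overline{\Omega}$, where $H\subset\mathbb{R}^n$ is a supporting hyperplane. Then the three boundary points all lie in $\widetilde H$. Since each $a_{\Omega}(\cdot,\cdot)$ is, by definition, the exit point of a line joining two of the points $x,y,z$, they also lie in the projective closure $\widetilde P$ of the affine span $P$ of $\{x,y,z\}$. The crucial observation is that $\widetilde P\not\subset\widetilde H$: indeed $H\cap\Omega=\emptyset$, while $x,y,z\in P\cap\Omega$. Hence $\widetilde P\cap\widetilde H$ is at most one-dimensional, so it is contained in a projective line of $\mathbb{RP}^n$, and the three boundary points all lie on this line.

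For the converse $(b)\Rightarrow(a)$, I would first use the boundedness of $\Omega$ to conclude that each $a_{\Omega}(\cdot,\cdot)$ is a finite point, so the projective line $L$ through them cuts $\mathbb{R}^n$ in an affine line $L^{\mathrm{aff}}$ carrying the three points. In the generic case where these three points are distinct, $L^{\mathrm{aff}}$ cannot meet $\Omega$: otherwise $L^{\mathrm{aff}}\cap\overline{\Omega}$ would be a segment meeting $\partial\Omega$ in only two points, contradicting the presence of three distinct boundary points on $L^{\mathrm{aff}}$. Hence $L^{\mathrm{aff}}$ is disjoint from the open convex set $\Omega$, and the main step is to produce a supporting hyperplane $H$ of $\overline{\Omega}$ that contains $L^{\mathrm{aff}}$; then $D:=H\cap\overline{\Omega}$ is an exposed proper face of $\overline{\Omega}$ containing the three boundary points.

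The hard part is this last step, since the usual Hahn--Banach separation theorem produces a hyperplane separating $\Omega$ from a point, not from an entire affine subspace. I would overcome this by the standard quotient trick: the linear projection $\pi:\mathbb{R}^n\to\mathbb{R}^n/L^{\mathrm{aff}}$ sends $L^{\mathrm{aff}}$ to a single point $p$ and $\Omega$ to an open convex subset $\pi(\Omega)$ with $p\notin\pi(\Omega)$ (because $L^{\mathrm{aff}}\cap\Omega=\emptyset$); Hahn--Banach in the quotient then yields a separating hyperplane whose preimage is the required $H\supset L^{\mathrm{aff}}$. The remaining degenerate subcases, in which some of the $a_{\Omega}$-points coincide and thus force $x,y,z$ to be collinear, are handled directly using the existence of a supporting hyperplane at a single boundary point.
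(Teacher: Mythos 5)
Your proof is correct in the essential cases, but it takes a genuinely different route from the paper's. For (a)~$\Rightarrow$~(b) the paper never passes to an exposed face: it intersects everything with the $2$-plane $\Pi$ spanned by $x,y,z$ and observes that $\Pi\cap D$ is a convex subset of the relative boundary of the planar convex body $\Pi\cap\overline{\Omega}$, hence an interval. You instead enlarge $D$ to an exposed face $H\cap\overline{\Omega}$ and run a projective dimension count on $\widetilde{P}\cap\widetilde{H}$; this is comparable in length and has the merit of making explicit where the recalled fact that every proper face sits inside an exposed face is used. For (b)~$\Rightarrow$~(a) the paper simply declares the implication obvious, whereas you supply the full argument: the affine line carrying three distinct boundary points cannot meet $\Omega$, and a supporting hyperplane containing that line is produced by quotienting out its direction and separating the image point from the open convex image of $\Omega$. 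That is precisely the argument the paper leaves implicit, and writing it out is a genuine improvement.

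One caveat, which you inherit from the statement rather than introduce: the degenerate collinear case is not as innocuous as your closing sentence suggests. If $x,y,z$ lie on a line in the order $y,x,z$ (take $\Omega$ the unit disc, $y=(-1/2,0)$, $x=(0,0)$, $z=(1/2,0)$), then $a_{\Omega}(x,y)$ and $a_{\Omega}(x,z)$ are the two distinct endpoints of the chord through $x,y,z$; the three points are trivially aligned, yet no proper face of $\overline{\Omega}$ contains both endpoints of a chord whose interior meets $\Omega$, so (b) holds while (a) fails, and no supporting hyperplane at a single boundary point can rescue your deferred subcase. The lemma is literally false in this configuration --- as is the paper's own claim, in its proof, that collinear $x,y,z$ force $a_{\Omega}(x,y)=a_{\Omega}(y,z)=a_{\Omega}(x,z)$ --- and should be read either with $y\in[x,z]$ or with ``aligned'' meaning three pairwise distinct collinear points, which is the only way the equivalence is invoked in the proof of Theorem~\ref{th.triangle_ineq}. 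Your direction (a)~$\Rightarrow$~(b) is unaffected (it correctly shows that (a) forces all three points to coincide when $x,y,z$ are collinear); only the deferred part of (b)~$\Rightarrow$~(a) is touched, and there the defect lies in the statement, not in your method.
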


\begin{proof}
 Let us set $a = a_{\Omega}(x,y)$,
  $b = a_{\Omega}(y,z)$ and  $c = a_{\Omega}(x,z)$. If $x,y$ and $z$ are aligned,
  then $a=b=c$. Otherwise, $a$, $b$ and $c$ belong to the $2$-plane $\Pi$ containing $x,y,z$.
  Therefore if $a,b,c$ belong to a proper face $D$, then   those three points are
  contained   in the interval $\Pi\cap D$. This proves the implication (a) $\Rightarrow$ (b)  
  The converse implication  (b) $\Rightarrow$ (a) is obvious.    
\end{proof}

\bigskip

\textbf{Proof of Theorem \ref{th.triangle_ineq}.}
We first consider $F_{\Omega}(x,z) = 0$. In this case the inequality (\ref{ineq.triangle}) is trivial and
we have $c \in H_{\infty}$. We then have equality in  (\ref{ineq.triangle}) if and only if 
$F_{\Omega}(x,y) = F_{\Omega}(y,z) = 0$,  which is equivalent to $a  \in H_{\infty}$ and $b \in H_{\infty}$.
It then follows from Lemma \ref{lem.pointfacealigned} that $a,b,c$ lie on some line (at infinity).

\smallskip 

We now consider the case $F_{\Omega}(x,z) > 0$, that is, $c \not\in H_{\infty}$. Choose a supporting
functional $h$ at the point $c$ (that is, $h(c) = 1$). We then have from  Corollary  \ref{cor.dFh1}:
\begin{eqnarray*} 
  F_{\Omega}(x,z)  &=&  \log \left(\frac{1-h(x)}{1-h(z)}\right)
   \\ &=&   \log \left(\frac{1-h(x)}{1-h(y)}\right)
  +  \log \left(\frac{1-h(y)}{1-h(z)}\right) 
  \\ & \leq &
    F_{\Omega}(x,y) + F_{\Omega}(y,z).  
\end{eqnarray*}
This proves the triangle inequality. Using again  Corollary  \ref{cor.dFh1}, we see that we have 
equality if and only if
$$
 F_{\Omega}(x,y)  =  \log \left(\frac{1-h(x)}{1-h(y)}\right) \quad \mbox{and} \quad
 F_{\Omega}(y,z) =  \log \left(\frac{1-h(y)}{1-h(z)}\right), 
$$
and this holds if and only if one of the following cases holds:

\smallskip 

\emph{Case 1.} We have   $a \not\in H_{\infty}$ and  $b \not\in H_{\infty}$. 
In that case, $h(a) = h(b) = 1 = h(c)$. The three points $a,b,c$ belong to the face
$D =  \partial\Omega \cap \{h=1\}$ and we conclude by  Lemma \ref{lem.pointfacealigned} 
that $a,b,c$ lie on some line .

\smallskip 

\emph{Case 2.} We have   $a  \in H_{\infty}$ and  $b \not\in H_{\infty}$. 
In that case $h(b) = 1 = h(c)$  and $h(x) = h(y)$. This implies that the line through
$x$ and $y$ is parallel to the hyperplane $\{h=1\}$ and therefore the point 
$$
 a \in \tilde R(x,y) \cap H_{\infty} \subset  \tilde{\partial}\Omega
$$
belongs to the support hyperplane $\{h=1\}$. Since $h(b) = 1$, the 
three points $a,b,c$ belong to the face
$D =  \tilde{\partial}\Omega \cap \{h=1\}$ and we conclude by  Lemma \ref{lem.pointfacealigned}. 
 
 \smallskip 

\emph{Case 3.} We have $a  \not\in H_{\infty}$ and  $b  \in H_{\infty}$. 
The argument is the same as in Case 2.

 \smallskip 
 
To complete the proof, we need to discuss the case   $a \in H_{\infty}$ and  $b  \in H_{\infty}$. 
In this case, we would have  $h(x) = h(y)$ and $h(y) = h(z)$ and this is not possible. Indeed, we have
$c = x+ \lambda (z-x)$ for some $\lambda$ and the equality $h(z) = h(x)$
would lead to the  contradiction
$$
 1 = h(c) = h(x+ \lambda (z-x)) =  h(x) + \lambda (h(z)-h(x))  = h(x)  < 1.
$$ 

We thus proved in all cases that the equality holds in  (\ref{ineq.triangle})
if and only if the points $a$, $b$ and $c$ are aligned in $\mathbb{RP}^n$.
\qed

\medskip

\begin{corollary}\label{cor.strictexposed}
Let $x$ and $z$ be two points in a  proper convex domain  $\Omega \subset \r^n$. 
Suppose that $a_{\Omega}(x,z)\in \partial \Omega$ is an exposed point.
Then for any point $y \not\in [x,z]$ we have 
$F_{\Omega}(x,z) < F_{\Omega}(x,y) + F_{\Omega}(y,z)$.
\end{corollary}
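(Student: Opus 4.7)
My plan is to use Corollary 4.5 (applied at the exposed boundary point $c := a_\Omega(x,z)$) to make strictness in the triangle inequality equivalent to a rigid configuration that the hypothesis $y\notin[x,z]$ forbids. After a harmless translation placing the origin in $\Omega$, I would pick a supporting functional $h$ at $c$ whose hyperplane $H = \{h=1\}$ meets $\overline\Omega$ exactly in $\{c\}$; this is precisely where exposedness is used. Corollary 4.4 then gives the identity $F_\Omega(x,z) = \log\frac{1-h(x)}{1-h(z)}$ (with equality, since $h(c)=1$) together with the two lower bounds $F_\Omega(x,y) \geq \log\frac{1-h(x)}{1-h(y)}$ and $F_\Omega(y,z) \geq \log\frac{1-h(y)}{1-h(z)}$. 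These telescope into the triangle inequality, so strictness fails only if both lower bounds are equalities.

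Next I would enumerate the equality configurations from Corollary 4.4 applied to $(x,y)$ and to $(y,z)$, writing $a := a_\Omega(x,y)$ and $b := a_\Omega(y,z)$. Each of $a$, $b$ must be either finite with $h(a)=1$ (resp.\ $h(b)=1$), or at infinity with $h(x)=h(y)$ (resp.\ $h(y)=h(z)$). The exposedness of $c$ now bites: since $H\cap\overline\Omega=\{c\}$, the condition $h(a)=1$ together with $a\in\overline\Omega$ forces $a=c$, and symmetrically $h(b)=1$ forces $b=c$. In the subcase $a=b=c$, the rays $R(x,y)$ and $R(x,z)$ share the endpoints $x$ and $c$ and therefore coincide, and similarly $R(y,z)=R(y,c)$; reading the orders places the four points along a common line as $x,y,z,c$, so $y\in[x,z]$, contradicting the hypothesis.

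In each mixed subcase (one of $a,b$ finite and equal to $c$, the other at infinity), the identification collapses all four points $x,y,z,c$ onto a single line. The accompanying at-infinity condition $h(x)=h(y)$ or $h(y)=h(z)$, combined with the strict monotonicity of $h$ along that line (valid because $h(c)=1$ while $h(x),h(z)<1$), forces $y=z$ or $x=y$, again contradicting $y\notin[x,z]$. Finally, if both $a,b\in H_\infty$ then $h(x)=h(y)=h(z)$, so $h$ is constant on the line through $x$ and $z$; but $c$ lies on this line with $h(c)=1>h(x)$, a contradiction. The main obstacle is simply the bookkeeping of these four cases; the genuinely new input is the exposedness hypothesis, which via $H\cap\overline\Omega=\{c\}$ upgrades each abstract ``$h$-equals-one'' equality condition into the rigid geometric identification of a ray endpoint with $c$.
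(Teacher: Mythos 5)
Your proof is correct and follows essentially the same route as the paper: the corollary is stated there as an immediate consequence of Theorem \ref{th.triangle_ineq}, whose proof is exactly the supporting-functional equality analysis via Corollary \ref{cor.dFh1} that you carry out, specialized to a functional $h$ whose hyperplane meets $\overline{\Omega}$ only at $c$. Your case-by-case verification (including the collinearity arguments and the impossibility of the doubly-at-infinity case) correctly fills in the details the paper leaves implicit.
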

 
 \medskip

\subsection{Geodesics and convexity in Funk geometry }

 We now describe  geodesics in Funk geometry. Let us start with a few definitions.
 
\begin{definitions}
A \emph{path} in a  weak  metric space $(X,d)$ is a continuous map $\gamma : I\to X$, 
where $I$ is an  interval of the real line. The \emph{length} of path $\gamma : [a,b]\to X$
is defined as
$$
 \mbox{Length}(\gamma) = \sup \sum_{i=0}^{N-1} d(\gamma(t_i),\gamma(t_{i+1})),
$$
where the supremum is taken over all subdivisions $a = t_0 < t_1 < \cdots < t_N = b$.
 Note that in the case  where the weak metric $d$ is non-symmetric, the order of the arguments  is important. 
 The path   $\gamma : [a,b]\to X$ is a  \emph{geodesic}\index{weak metric!geodesic} if
$d(\gamma(a),\gamma(b)) =   \mbox{Length}(\gamma)$. 
The weak metric space $(X,d)$ is said to be a \emph{weak geodesic metric space} if there exists a 
geodesic path connecting any pair of points. It is said to be \emph{uniquely geodesic} if this
geodesic path is unique up to reparametrization. 
A subset $A \subset X$ is  said to be \emph{geodesically convex}\index{geodesically convex}\index{metric!geodesically convex}  if given any two points in $A$, any geodesic path joining them is contained in $A$.
\end{definitions}

\begin{lemma}\label{lem.critgeodesic}
The path  $\gamma : [a,b]\to X$ is geodesic if and only if  for any  $t_1,t_2,t_3$ in $[a,b]$ satisfying $t_1 \leq t_2 \leq t_3$ we have $d(\gamma(t_1),\gamma(t_3))=d(\gamma(t_1),\gamma(t_2)) +d(\gamma(t_2),\gamma(t_3))$. 
\end{lemma}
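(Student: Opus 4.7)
The plan is to exploit two elementary facts about length that hold for any weak metric: length is additive under subdivision of the parameter, $\mbox{Length}(\gamma|_{[s,t]}) = \mbox{Length}(\gamma|_{[s,r]}) + \mbox{Length}(\gamma|_{[r,t]})$ for $s \le r \le t$; and the trivial bound $\mbox{Length}(\gamma|_{[s,t]}) \ge d(\gamma(s),\gamma(t))$, obtained from the partition $\{s,t\}$. Both follow directly from the triangle inequality applied to arguments in the natural left-to-right order, and neither uses symmetry of $d$.

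For the forward implication, I would first show that if $\gamma$ is a geodesic on $[a,b]$ then so is every restriction $\gamma|_{[s,t]}$. Chaining additivity with the triangle inequality yields
$$
\mbox{Length}(\gamma)=\mbox{Length}(\gamma|_{[a,s]})+\mbox{Length}(\gamma|_{[s,t]})+\mbox{Length}(\gamma|_{[t,b]})\ge d(\gamma(a),\gamma(s))+d(\gamma(s),\gamma(t))+d(\gamma(t),\gamma(b))\ge d(\gamma(a),\gamma(b)).
$$
Since the first and last quantities coincide by the geodesic assumption, every inequality in the chain is an equality, so in particular $\mbox{Length}(\gamma|_{[s,t]}) = d(\gamma(s),\gamma(t))$. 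Applying this identity on $[t_1,t_3]$, $[t_1,t_2]$ and $[t_2,t_3]$ and invoking length additivity one more time gives the required relation
$d(\gamma(t_1),\gamma(t_3)) = d(\gamma(t_1),\gamma(t_2)) + d(\gamma(t_2),\gamma(t_3))$.

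For the reverse implication, given any subdivision $a = s_0 < s_1 < \cdots < s_N = b$, a straightforward induction on $N$ using the hypothesis for the triples $s_0 \le s_k \le s_{k+1}$ telescopes the partition sum to
$\sum_{i=0}^{N-1} d(\gamma(s_i),\gamma(s_{i+1})) = d(\gamma(a),\gamma(b))$.
Thus every partition sum equals $d(\gamma(a),\gamma(b))$, hence the supremum defining $\mbox{Length}(\gamma)$ also equals $d(\gamma(a),\gamma(b))$, which is exactly the geodesic condition.

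The only point that needs a moment of care, and which I regard as the main (minor) obstacle, is to verify that length additivity and the monotonicity of partition sums under refinement genuinely do go through when $d$ is only a weak metric. This is routine: each step uses only the triangle inequality and non-negativity, and asymmetry causes no problem as long as the arguments of $d$ are always kept in the order inherited from the parameter interval $[a,b]$.
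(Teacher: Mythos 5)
Your proof is correct: the paper itself omits the argument, stating only that the lemma is ``an easy consequence of the definitions,'' and your chain of inequalities for the forward direction together with the telescoping induction for the converse is precisely the standard verification intended. Your closing remark that asymmetry is harmless as long as the arguments of $d$ are kept in the order inherited from $[a,b]$ is exactly the right point of care.
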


 The proof is an easy consequence of the definitions. \qed
 
 \medskip
 
Let us now consider a proper convex domain $\Omega \subset \r^n$ and a proper face $D \subset \tilde{\partial} \Omega$.
For any point $p\in \Omega$, we denote by  
\begin{equation}\label{eq.defcone}
 C_p(D) = \{v \in \r^n \mid v = 0 \text{ or } \overline{R}(p,p+v)\cap D \neq \emptyset\}.
\end{equation}
Here $\overline{R}(p,p+v)$ is the extended ray through $p$ and $p+v$ in $\mathbb{RP}^n$.  (Recall that the projective space $\mathbb{RP}^n$ is considered here as a completion of the Euclidean space $\mathbb{R}^n$ obtained by adding a hyperplane at infinity; the completion of the ray is then its topological completion.)
Observe that $C_p(D)$ is a cone in $\r^n$ at the origin, its translate $p+C_p(D)$ is 
the cone over $D$ with vertex at $p$.
We then have the following

\begin{theorem}\label{th.FunkGeodesics}
Let $\gamma : [0,1]\to  \Omega$ be a path in a proper convex domain of  $ \r^n$. 
Then $\gamma$ is a geodesic    for the Funk metric in $\Omega$ if and only if there exists a
face $D \subset \tilde{\partial} \Omega$ such that 
for any $t_1 < t_2$ in $[0,1]$ we have 
$$\gamma(t_2)-\gamma(t_1) \in  C_{\gamma(t_1)}(D).$$
\end{theorem}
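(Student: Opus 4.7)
The plan is to reformulate the face condition so it matches the equality case of the triangle inequality, then run the easy direction directly through Theorem \ref{th.triangle_ineq} and Lemma \ref{lem.critgeodesic}, and handle the hard direction by producing $D$ from a single supporting functional at the endpoint $a$-value.

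\textbf{Reformulation.} For $\gamma(t_1)\neq \gamma(t_2)$, the extended ray $\overline R(\gamma(t_1),\gamma(t_2))$ meets $\tilde\partial\Omega$ precisely at $a_{\Omega}(\gamma(t_1),\gamma(t_2))$, so the condition $\gamma(t_2)-\gamma(t_1)\in C_{\gamma(t_1)}(D)$ is equivalent to $a_{\Omega}(\gamma(t_1),\gamma(t_2))\in D$ (and is automatic when $\gamma(t_1)=\gamma(t_2)$). Hence the theorem reads: $\gamma$ is a Funk geodesic if and only if some proper face $D\subset\tilde\partial\Omega$ contains $a_{\Omega}(\gamma(t_1),\gamma(t_2))$ for every $t_1<t_2$ in $[0,1]$.

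\textbf{Easy direction ($\Leftarrow$).} Assume such a $D$ exists. For every triple $t_1<t_2<t_3$, the three points $a_{\Omega}(\gamma(t_1),\gamma(t_2))$, $a_{\Omega}(\gamma(t_2),\gamma(t_3))$, $a_{\Omega}(\gamma(t_1),\gamma(t_3))$ lie in $D$, hence are aligned in $\mathbb{RP}^n$ by Lemma \ref{lem.pointfacealigned}. Theorem \ref{th.triangle_ineq} then yields $F_{\Omega}(\gamma(t_1),\gamma(t_3))=F_{\Omega}(\gamma(t_1),\gamma(t_2))+F_{\Omega}(\gamma(t_2),\gamma(t_3))$, and Lemma \ref{lem.critgeodesic} shows $\gamma$ is a geodesic.

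\textbf{Hard direction ($\Rightarrow$).} Assume $\gamma$ is geodesic. Choose a supporting functional $h_0$ at $a_{\Omega}(\gamma(0),\gamma(1))$ (using $H_{\infty}$ when this $a$-value is at infinity) and let $D\subset \tilde\partial\Omega$ be the corresponding exposed face. By Corollary \ref{cor.dFh1} we have $F_{\Omega}(\gamma(0),\gamma(1))=\log\frac{1-h_0(\gamma(0))}{1-h_0(\gamma(1))}$. Applying the additivity of $F_{\Omega}$ along $0<t<1$, the same corollary bounds both summands from below by the corresponding log-ratios; these lower bounds telescope to $F_{\Omega}(\gamma(0),\gamma(1))$, so both bounds are attained. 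The equality case of Corollary \ref{cor.dFh1} then forces $a_{\Omega}(\gamma(0),\gamma(t))\in D$ and $a_{\Omega}(\gamma(t),\gamma(1))\in D$, and gives the identity
$$
F_{\Omega}(\gamma(0),\gamma(t))=\log\frac{1-h_0(\gamma(0))}{1-h_0(\gamma(t))},\qquad t\in[0,1].
$$
Subtracting two such identities and using additivity on any triple $0<s<t$ yields $F_{\Omega}(\gamma(s),\gamma(t))=\log\frac{1-h_0(\gamma(s))}{1-h_0(\gamma(t))}$, which by Corollary \ref{cor.dFh1} forces $a_{\Omega}(\gamma(s),\gamma(t))\in D$. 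This is the desired face.

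\textbf{Main obstacle.} The delicate point is the treatment of the $a$-values that fall on $H_{\infty}$. When $a_{\Omega}(\gamma(s),\gamma(t))\in H_{\infty}$, the equality case of Corollary \ref{cor.dFh1} reads $h_0(\gamma(s))=h_0(\gamma(t))$ rather than $h_0(a)=1$, so one must take $D$ to be the projective closure in $\mathbb{RP}^n$ of the exposed face $\overline\Omega\cap\{h_0=1\}$ in order to absorb the directions parallel to $\{h_0=1\}$ that exit $\Omega$ at infinity. I would therefore first carry out the bounded case, where every $a_{\Omega}$ lies in $\partial\Omega$ and the above telescoping is clean, and then extend to unbounded $\Omega$ by incorporating the face at infinity into $D$.
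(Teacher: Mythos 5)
Your proof is correct, and it is worth comparing with what the paper actually does: the paper disposes of this theorem in a single sentence, declaring it a direct consequence of Theorem \ref{th.triangle_ineq}, Lemma \ref{lem.pointfacealigned} and Lemma \ref{lem.critgeodesic}. That sentence covers your easy direction exactly (membership of all the $a$-values in one face gives per-triple alignment, hence additivity, hence geodesy via Lemma \ref{lem.critgeodesic}). Where you genuinely add something is in the converse: the three cited results only produce, for each triple $t_1<t_2<t_3$, \emph{some} proper face containing the three associated boundary points, and the paper never explains why one face $D$ can be chosen uniformly for all pairs. Your construction --- fix a single supporting functional $h_0$ at $a_{\Omega}(\gamma(0),\gamma(1))$, take $D$ to be the (projectively closed) exposed face $\overline{\Omega}\cap\{h_0=1\}$, and telescope the equality case of Corollary \ref{cor.dFh1} through the additivity relations $F_{\Omega}(\gamma(0),\gamma(t))+F_{\Omega}(\gamma(t),\gamma(1))=F_{\Omega}(\gamma(0),\gamma(1))$ --- supplies exactly this missing gluing step, and your remark about absorbing directions parallel to $\{h_0=1\}$ into the face at infinity is the right treatment of the $a$-values landing on $H_{\infty}$ (it mirrors Case 2 of the paper's proof of Theorem \ref{th.triangle_ineq}). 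Two points to make explicit when writing this up: first, the degenerate case $F_{\Omega}(\gamma(0),\gamma(1))=0$ needs a separate line (additivity then forces every pairwise Funk distance to vanish, and one takes $D=\tilde{\partial}\Omega\cap H_{\infty}$); second, your opening reformulation of $\gamma(t_2)-\gamma(t_1)\in C_{\gamma(t_1)}(D)$ as $a_{\Omega}(\gamma(t_1),\gamma(t_2))\in D$ relies on the extended ray meeting $\tilde{\partial}\Omega$ only at $a_{\Omega}$, which is clear for bounded $\Omega$ but requires the same care at infinity for unbounded domains --- an imprecision already present in the paper's own statement rather than a defect of your argument.
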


In particular if $a_{\Omega}(x,y) \in \partial\Omega$ is an exposed point, then 
there exists a unique (up to reparametrization) geodesic joining $x$ to $y$,
and this geodesic is a parametrization of the affine segment $[x,y]$.    

\begin{proof}
 This is a direct consequence of Theorem \ref{th.triangle_ineq} together with 
Lemma \ref{lem.pointfacealigned} and Lemma
\ref{lem.critgeodesic}. 
\end{proof}

For smooth curves we have the following 
\begin{corollary}
Let  $\gamma : [0,1]\to  \Omega$ be a $C^1$ path in a proper convex domain of  $ \r^n$. 
Then $\gamma$ is a Funk geodesic if and only if there exists a face $D \subset \tilde{\partial} \Omega$ such that 
$\dot \gamma (t) \in  C_{\gamma(t)}(D)$ for any $t \in [a,b]$.
\end{corollary}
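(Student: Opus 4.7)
The plan is to derive this as a $C^1$ version of Theorem \ref{th.FunkGeodesics}, by passing between the infinitesimal condition $\dot\gamma(t) \in C_{\gamma(t)}(D)$ and the global condition $\gamma(t_2)-\gamma(t_1) \in C_{\gamma(t_1)}(D)$ that characterizes Funk geodesics. Both implications are about a single face $D$ serving for all values of the parameter, and in each case the argument is local/infinitesimal on one side and global on the other.

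\emph{From geodesic to tangent condition.} Suppose $\gamma$ is a Funk geodesic. Theorem \ref{th.FunkGeodesics} yields a face $D \subset \tilde\partial\Omega$ with $\gamma(t+h) - \gamma(t) \in C_{\gamma(t)}(D)$ for every admissible $t$ and every $h > 0$. Since $C_{\gamma(t)}(D)$ is a cone (homogeneous under positive scaling), I can divide by $h$ to get $(\gamma(t+h) - \gamma(t))/h \in C_{\gamma(t)}(D)$. Passing to the limit $h \to 0^+$, the tangent vector $\dot\gamma(t)$ lies in the closure of $C_{\gamma(t)}(D)$; replacing $D$ by its closure in $\tilde\partial\Omega$ if necessary (the closure of a face is a face), the limit belongs to $C_{\gamma(t)}(D)$ itself.

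\emph{From tangent condition to geodesic.} Conversely, assume $\dot\gamma(t)\in C_{\gamma(t)}(D)$ for all $t$. Fix $t_1$, set $p=\gamma(t_1)$, and consider the closed convex cone $U := p + C_p(D)$, namely the projective cone over $D$ with apex $p$. By Theorem \ref{th.FunkGeodesics} it suffices to show $\gamma(t)\in U$ for all $t\geq t_1$. I would do this by considering $S:=\{t\in[t_1,b]:\gamma(t)\in U\}$, closed because $U$ is closed, and showing $S$ is open in $[t_1,b]$ by the following local step: if $q=\gamma(t)\in U\cap\Omega$ and $\dot\gamma(t)=\sigma(d-q)$ with $d\in D$, $\sigma\geq 0$ (the case of a point of $D$ at infinity is analogous), then $q+\epsilon\dot\gamma(t)=(1-\epsilon\sigma)q+\epsilon\sigma d$ is a convex combination of points of the convex set $U$ and so lies in $U$.

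\emph{Main obstacle.} The delicate point is controlling the $o(\epsilon)$ remainder in $\gamma(t+\epsilon)=\gamma(t)+\epsilon\dot\gamma(t)+o(\epsilon)$ when $\gamma(t)$ lies on the relative boundary of $U$, where the convexity argument alone does not prevent the trajectory from leaving $U$. The cleanest way to bypass this is to dualize via support functionals, as in the proof of Theorem \ref{th.triangle_ineq}: for each supporting functional $h$ for $\Omega$ with $D\subset\{h=1\}$, the condition $\dot\gamma(t)\in C_{\gamma(t)}(D)$ forces $h(\dot\gamma(t))\geq 0$, so $\varphi(t):=h(\gamma(t))$ is nondecreasing, and Corollary \ref{cor.dFh1} combined with this monotonicity shows that $a_\Omega(\gamma(t_1),\gamma(t))$ remains on the face $D=\bigcap_h \{h=1\}\cap\overline\Omega$ for every $t>t_1$. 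Lemma \ref{lem.pointfacealigned} and Theorem \ref{th.FunkGeodesics} then conclude the argument.
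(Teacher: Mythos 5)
Your first direction (geodesic $\Rightarrow$ tangent condition) is essentially fine: by Theorem \ref{th.FunkGeodesics} the difference quotients lie in the cone $C_{\gamma(t)}(D)$, and after replacing $D$ by its closure in $\mathbb{RP}^n$ the cone is closed, so $\dot\gamma(t)$ belongs to it. The genuine gap is in the converse, and it sits exactly at the step you flag as the ``main obstacle'': the dualization you propose does not close it. Knowing that $h(\dot\gamma(t))\geq 0$, hence that $h\circ\gamma$ is nondecreasing, for every supporting functional $h$ with $D\subset\{h=1\}$ is \emph{strictly weaker} than what you need, namely that $a_{\Omega}(\gamma(t_1),\gamma(t_2))\in D$ for all $t_1<t_2$. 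Concretely, take $\Omega=(-1,1)^2$ and $D$ the top edge, so that the only supporting functional identically equal to $1$ on $D$ is $h(x,y)=y$; the path $\gamma(t)=(t,0)$ has $h\circ\gamma$ constant, hence nondecreasing, yet its secant rays exit $\Omega$ through the right edge, not through $D$. So the information retained after passing to the functionals vanishing on $D$ cannot recover the cone condition on secants. Corollary \ref{cor.dFh1} tells you that $a_{\Omega}(x,y)\in\{h=1\}$ exactly when $h$ attains the supremum of Corollary \ref{cor.dFh2}, and monotonicity of $h\circ\gamma$ gives no handle on which $h$ attains that supremum; the appeal to Lemma \ref{lem.pointfacealigned} at the end therefore has nothing to act on.

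What does work is the primal argument you started and abandoned, provided the $o(\epsilon)$ issue is handled by a Gronwall-type estimate rather than by openness. Fix $t_1$ and let $U=\gamma(t_1)+C_{\gamma(t_1)}(\overline D)$, a closed convex cone containing $\overline D$. Let $\pi(t)$ be the Euclidean projection of $\gamma(t)$ onto $U$ and $\phi(t)=\dist(\gamma(t),U)$; then $\phi^2$ is $C^1$ along $\gamma$ and $\tfrac{1}{2}\tfrac{d}{dt}\phi^2=\langle\gamma(t)-\pi(t),\dot\gamma(t)\rangle$. Writing $\dot\gamma(t)=\sigma\bigl(d-\gamma(t)\bigr)$ with $d\in\overline D\subset U$ and $\sigma\geq 0$ (or taking $\dot\gamma(t)$ in the recession cone of $U$ when $d$ is at infinity), the obtuse-angle property of the projection gives $\langle\gamma-\pi,\,d-\pi\rangle\leq 0$, hence $\tfrac{d}{dt}\phi^2\leq-2\sigma\phi^2\leq 0$; since $\phi(t_1)=0$ this forces $\phi\equiv 0$, i.e.\ $\gamma(t)\in U$ for all $t\geq t_1$, which is the secant condition, and Theorem \ref{th.FunkGeodesics} then concludes. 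Note that the paper states this corollary without proof, as an immediate consequence of Theorem \ref{th.FunkGeodesics}, so the whole burden of the argument is precisely the passage your dual step fails to supply.
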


 \begin{center}
    \vspace{1cm} 
     \begin{picture}(130,120)    \centering 
         \includegraphics[width=.40\linewidth]{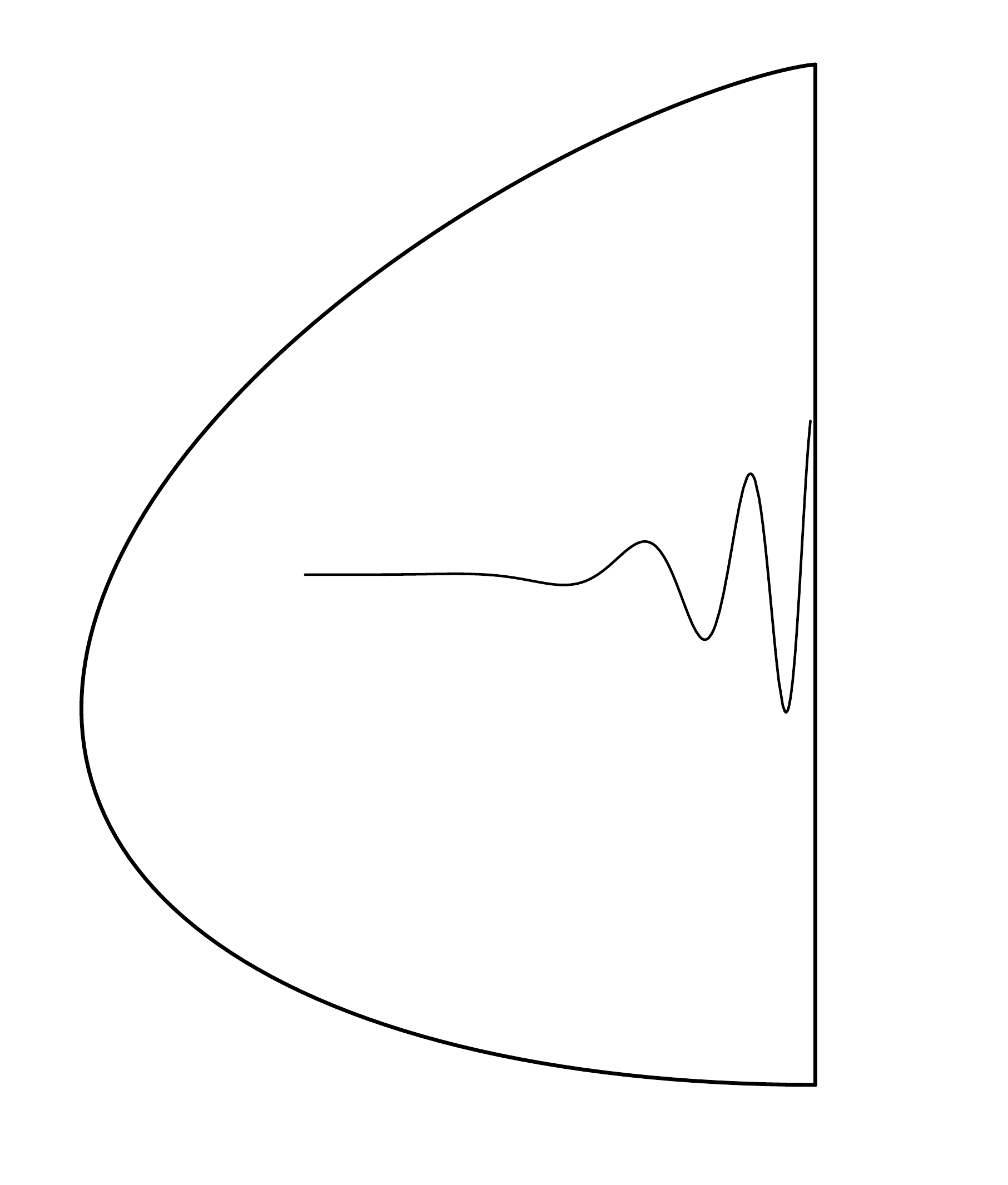}  
     \put(-22,45){$D$ }       \put(-86,75){$\gamma$ } 
   \put(-120,99){$\Omega$ }     
   \end{picture}  \\
  \begin{minipage}[h]{0.75\textwidth}  
    {\small {A typical smooth geodesic in Funk geometry: all tangents to the curve meet the same face $D \subset\partial \Omega$.}}
  \end{minipage}
 \end{center}  

\medskip 

For a subset $\Omega$ of $\mathbb{R}^n$, equipped with a (weak) metric $F$, we have two notions of convexity:  \emph{affine convexity},\index{affinely convex}\index{convex!affinely} saying that for every pair of points in $\Omega$, the affine (or Euclidean) geodesic joining them is contained in $\Omega$, and \emph{geodesic convexity},\index{geodesically convex}\index{convex!geodesically} saying that for every pair of points in $\Omega$, the $F$-geodesic joining them is contained in $\Omega$

From the preceding results, we have the following consequence on geodesic  convexity of subsets for the Funk metric.

\begin{corollary}\label{cor.setgeodcomplete}
Let  $\Omega$ be a bounded convex domain of  $ \r^n$. Then the following are equivalent:
\begin{enumerate}
  \item $\Omega$ is strictly convex.
  \item $\Omega$ is uniquely geodesic for the Funk metric.
  \item A subset $A\subset \Omega$ is geodesically convex for the Funk metric
  if and only if $A$ is affinely convex.
  \item The forward  open balls in $\Omega$ are geodesically convex with respect to the Funk metric $F_{\Omega}$.
\end{enumerate}
\end{corollary}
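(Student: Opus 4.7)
The plan is to prove the chain $(1) \Rightarrow (2) \Rightarrow (3) \Rightarrow (4) \Rightarrow (1)$.

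For $(1) \Rightarrow (2)$: strict convexity forces every boundary point of $\Omega$ to be exposed (by the lemma characterizing strict convexity). Hence Corollary \ref{cor.strictexposed} gives the strict triangle inequality $F_\Omega(x,z) < F_\Omega(x,y) + F_\Omega(y,z)$ whenever $y \notin [x,z]$. By Lemma \ref{lem.critgeodesic}, every point on a Funk geodesic from $x$ to $z$ must realize equality in the triangle inequality, and so must lie on $[x,z]$. Since the affine segment is itself a Funk geodesic by Proposition \ref{prop.basicproperties}(c), it is the unique one up to reparametrization.

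The implications $(2) \Rightarrow (3)$ and $(3) \Rightarrow (4)$ are short. Under $(2)$, the unique Funk geodesic between two points of $\Omega$ is the affine segment joining them, so geodesic convexity and affine convexity of subsets coincide, which is precisely $(3)$. By Proposition \ref{prop:balls-homothetic}, each forward open ball is the Euclidean homothetic image of the convex set $\Omega$ and is therefore affinely convex; applying $(3)$ then yields $(4)$.

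The substantive step is $(4) \Rightarrow (1)$, which I prove by contrapositive. Assume $\Omega$ is not strictly convex, so $\overline{\Omega}$ admits a face $D$ with $\dim D \geq 1$. The plan is first to construct a broken Funk geodesic $x \to y \to z$ with $y \notin [x,z]$, and then to exhibit a forward ball containing $\{x, z\}$ but not $y$. For the first step, pick $c_0 \in \operatorname{relint} D$ and take $x, z \in \Omega$ on the open segment joining a base point to $c_0$, so that $a_\Omega(x,z) = c_0$. Since $\dim D \geq 1$, the two sets $\{y \in \Omega : a_\Omega(x,y) \in D\}$ and $\{y \in \Omega : a_\Omega(y,z) \in D\}$ have nonempty joint interior containing points off the line $xz$; any such $y$ produces, via Theorem \ref{th.triangle_ineq} together with Lemma \ref{lem.critgeodesic}, a non-affine Funk geodesic $x \to y \to z$.

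The main obstacle is the second step: finding $p \in \Omega$ with $F_\Omega(p,y) > \max\{F_\Omega(p,x), F_\Omega(p,z)\}$, after which any $\rho$ in the intervening range gives a ball $B^+(p,\rho)$ that contains $x$ and $z$ but not $y$, contradicting $(4)$. I exploit the freedom in $y$ within the cone-intersection region: choose $y$ close to some boundary point $\bar y \in \partial \Omega \setminus D$, and place $p$ on the far side of $y$ from $\bar y$. Then the ray $R(p,y)$ extended past $y$ exits $\Omega$ very near $\bar y$, making the denominator $|y - a_\Omega(p,y)|$ in the formula for $F_\Omega(p,y)$ small and therefore $F_\Omega(p,y)$ large, while the rays through $x$ and $z$ exit $\partial \Omega$ at points bounded away from $x, z$, keeping $F_\Omega(p,x)$ and $F_\Omega(p,z)$ controlled. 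Continuity of $F_\Omega$ on $\Omega \times \Omega$ (Proposition \ref{eq:conv-F}) then ensures the strict separation holds on an open set of parameters, yielding a forward ball which is not geodesically convex.
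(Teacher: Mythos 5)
Your chain $(1)\Rightarrow(2)\Rightarrow(3)\Rightarrow(4)$ is correct and essentially the paper's argument (the paper invokes Theorem \ref{th.FunkGeodesics} for $(1)\Rightarrow(2)$ where you use Corollary \ref{cor.strictexposed}; both work). The genuine gap is in $(4)\Rightarrow(1)$. For the concatenation $x\to y\to z$ to be a Funk geodesic you need $a_\Omega(x,y)\in D$ and $a_\Omega(y,z)\in D$, and these two constraints confine $y$ to a region that is \emph{relatively compact in} $\Omega$: the first forces $y\in\bigcup_{w\in D}(x,w)$, whose closure meets $\partial\Omega$ only in $\overline D$, while the second forces $h(y)<h(z)<1$ for a supporting functional $h$ equal to $1$ on $D$, so that $y$ lies in $\{(1-s)x+sw \mid w\in\overline D,\ 0\le s\le s_0\}$ with $s_0=(h(z)-h(x))/(1-h(x))<1$. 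In particular $y$ cannot approach any boundary point $\bar y\in\partial\Omega\setminus D$ (indeed it cannot approach $\partial\Omega$ at all), so $\operatorname{dist}(y,\partial\Omega)$ is bounded below and $F_\Omega(p,y)\le\log\bigl(\operatorname{diam}(\Omega)/\operatorname{dist}(y,\partial\Omega)\bigr)$ is bounded above \emph{uniformly in $p$}. The mechanism you rely on --- making $F_\Omega(p,y)$ blow up by pushing $y$ toward $\partial\Omega\setminus D$ --- is therefore unavailable, and the key comparison $F_\Omega(p,y)>\max\{F_\Omega(p,x),F_\Omega(p,z)\}$ is not established. It can genuinely fail: if $\Omega$ is a square with $D$ its top side and the rays from $p$ through $y$ and through $z$ both exit through the top, then $F_\Omega(p,w)=\log\bigl((1-h(p))/(1-h(w))\bigr)$ for such $w$, and $h(y)<h(z)$ forces $F_\Omega(p,y)<F_\Omega(p,z)$ no matter where $p$ is placed.

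The paper resolves exactly this tension with a device your argument lacks. It first chooses the intermediate point $q$ on the level set $h(q)=h(p)$, far enough out that $F_\Omega(p,q)>F_\Omega(p,x)+F_\Omega(p,y)$ (possible because the level ray from $p$ exits $\Omega$ off the face, where $F_\Omega(p,\cdot)$ does blow up), and then, when the incidence conditions $a_\Omega(x,q),a_\Omega(q,y)\in[a,b]$ fail, it shrinks the whole configuration by a Euclidean homothety centered at $c=a_\Omega(x,y)$: for a small enough ratio the face becomes large relative to the configuration, the incidence conditions hold, and one checks that the ball separation survives the rescaling. You need either this rescaling step or a substitute for it; as written, $(4)\Rightarrow(1)$ does not go through.
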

 
 \begin{proof}
(1) $\Rightarrow$ (2) immediately follows from Theorem  \ref{th.FunkGeodesics}. (2) $\Rightarrow$ (3)
is obvious and (3) $\Rightarrow$ (4)  follows from Proposition \ref{prop:balls-homothetic}.
 
\begin{tikzpicture}[line cap=round,line join=round,>=triangle 45,x=0.7cm,y=0.7cm]
\clip(-2,-4.2) rectangle (10,7);
  \draw plot  [smooth ]  coordinates  {  
(0,0) (-0.3,2.5)    (4.2,4.2) (8.4,2.5)  (8,0) }; 
\draw  (0,0)--(8,0);
  \draw plot  [smooth ]  coordinates  {  
(4,0.4) (3.8875,1.3375)    (5.575,1.975) (7.15,1.3375)  (7,0.4) }; 
\draw  (5.5,0)--(5.5, 2);
\draw [line width=0.2pt,dash pattern=on 3pt off 1.1pt]  (4,0.4)--(7,0.4);
\draw [line width=0.2pt,dash pattern=on 3pt off 1.1pt]  (5.5,1.8)--(0.8,0);
\draw [line width=0.2pt,dash pattern=on 3pt off 1.1pt]  (3.5,1.02)--(7.5,0);
\draw [line width=0.2pt,dash pattern=on 3pt off 1.1pt]  (3.5,1.02)--(5.5,1.02);

 \begin{scriptsize}
 \draw  (6.6,3.8) node {$\Omega$};
\fill   (5.5,0.52 ) circle (0.8pt);
\draw  (5.7,0.6)  node {$y$};
\fill   (3.5,1.02) circle (0.8pt);
\draw  (3.35,1.15)  node {$q$};
\fill   (5.5,1.02) circle (0.8pt);
\draw  (5.7,1.0)  node {$p$};
\fill   (5.5,1.8) circle (0.8pt);
\draw  (5.7,1.8)  node {$x$};

 \draw  (8,-0.2) node {$b$};
 \draw  (-0.1,-0.2) node {$a$};
\end{scriptsize}
\end{tikzpicture}
 
We now prove  (4) $\Rightarrow$ (1) by contraposition. Let us assume that  $\Omega$ is not strictly convex, so that there exists a non-trivial segment 
$[a,b] \subset \partial \Omega$. On can then find  a supporting functional $h$ for $\Omega$ such that 
$h(a) = h(b) = 1$.  Let us chose a segment $[x,y] \subset \Omega$ such that $a_{\Omega}(x,y) \in [a,b]$
and another segment  $[p,q] \subset  \Omega$ such that  $p \in [x,y]$ and $h(q) = h(p)$. 
We also assume $x \neq p \neq y$ and $F_{\Omega}(p,q) >  \delta :=  F_{\Omega}(p,x)+F_{\Omega}(p,y) $.
Observe that we then have $h(x) < h(p) = h(q) < h(y)$. 

If $a_{\Omega}(x,q) \in [a,b]$ and $a_{\Omega}(q,y) \in [a,b]$, then the proof is finished since
in this case
$$
  F_{\Omega}(x,y) = \log \frac{h(x)}{h(y) } =  \log \frac{h(x)}{h(q)} \cdot \frac{h(q)}{h(y)} = 
  F_{\Omega}(x,q) +   F_{\Omega}(q,y). 
$$
Since $x,y \in B^+(p,\delta)$  while $q \not\in B^+(p,\delta)$, we conclude that the forward ball
$B^+(p,\delta)$ is not geodesically convex.

If $a_{\Omega}(x,q) \not\in [a,b]$ or  $a_{\Omega}(q,y) \not\in [a,b]$, we let $c = a_{\Omega}(x,y)$
and we   consider the Euclidean homothety 
$f_{\lambda} : \r^n \to \r^n$ centered at $c$ with dilation factor $\lambda < 1$. Let $p' = f_{\lambda}(p)$, $q' = f_{\lambda}(q), x'=f_{\lambda}(x), y' = f_{\lambda}(y)$ and $q' = f_{\lambda}(q)$. It is now clear that if $\lambda > 0$ is small enough, then $a_{\Omega}(x',q') \in [a,b]$ and $a_{\Omega}(q',y') \in [a,b]$. It is also clear that one can find a number $\delta'$
such that  $x',y' \in B^+(p',\delta')$  while $q' \not\in B^+(p',\delta')$. The 
 previous argument shows  then that 
$  F_{\Omega}(x',y')  =   F_{\Omega}(x',q') +   F_{\Omega}(q',y')$ and therefore $B^+(p',\delta')$ is not
geodesically convex. 
\end{proof}

 \begin{remark}
 Note the formal analogy between Corollary \ref{cor.setgeodcomplete}  and the corresponding result concerning the geodesic segments of a Minkowski metric on $\mathbb{R}^n$: if the unit ball of a Minkowski  metric is strictly convex, then the only geodesic segments of this metric are the affine segments.
\end{remark}

\section{Nearest points in Funk Geometry}\label{nearest}

Let $\Omega\subset \mathbb{R}^n$ be a convex set equipped with its Funk metric $F$.

\begin{definition}
Let $x$ be a point in $\Omega$ and let $A$ be a subset of $\Omega$. A point $y$ in $A$ is said to be a
\emph{nearest point},\index{nearest point}  or a  \emph{foot}\index{foot} (in Buseman's terminology), for $x$ on $A$ if 
$$
  F_{\Omega}(x,y) =  F_{\Omega}(x,A) := \inf_{z \in A} F_{\Omega}(x,z).
$$ 
\end{definition}
It is clear from the continuity of the function $y \mapsto  F_{\Omega}(x,y)$ that for any closed non-empty subset 
$A \subset \Omega$ and any $x\in \Omega$, there exists a nearest point $y\in A$. This point need not be unique
in general. 

\begin{proposition}
For a proper convex domain $\Omega \subset \r^n$, the following properties are equivalent:
\begin{enumerate}[a.)]
\item $\Omega$ is strictly convex.
\item For any closed convex subset $A \subset \Omega$ and for any $x \in \Omega$, there is a unique
nearest point $y\in A$.
\end{enumerate}

\begin{proof}
let $x$ be a point in $\Omega$ and assume  $r= F_{\Omega}(x,A)>0$. 
Suppose that $y$ and $z$ are two nearest points of $A$ for $x$. 
For  any point $w$ on the segment $[y,z]$ we have $F_{\Omega}(x,w) \leq r$
because the  closed ball $\bar{B}^+(x,r)$ is convex.
Since $A$ is also assumed to be convex, we have $w \in A$ and
therefore $F_{\Omega}(x,w) \geq r$.  We conclude that  $F_{\Omega}(x,w) = r$, that is, 
$w \in \partial \bar{B}^+(x,r)$.  From Proposition \ref{prop:balls-homothetic}, we know that   if  $\Omega$ is strictly convex, then $\bar{B}^+(x,r)$ 
is also strictly convex and we conclude that $y=z$. It follows that we have a 
unique  nearest point on $A$ for $x$. This proves \  (a) $\Rightarrow$ (b).

To prove  \  (b) $\Rightarrow$ (a),   we assume 
by contraposition that $\Omega$ is strictly convex. Again from Proposition \ref{prop:balls-homothetic}, we know that the forward ball $\bar{B}^+(x,r)$
is not strictly convex. In particular  $\partial {B}^+(x,r)$ contains a non trivial 
segment $A= [y,z]$. Any point in the convex set $A$  is a nearest point to $x$ and
this completes the proof.
\end{proof}
\end{proposition}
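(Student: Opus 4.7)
The plan is to reduce both implications to Proposition~\ref{prop:balls-homothetic}, which says that the open forward ball $B^+(x,\rho)$ is the image of $\Omega$ under the Euclidean homothety centered at $x$ with ratio $(1-e^{-\rho})$. In particular, $\overline{B}^+(x,\rho)$ is strictly convex if and only if $\Omega$ is, and the forward sphere $\partial B^+(x,\rho)$ contains a nontrivial segment exactly when $\partial\Omega$ does. Once this is recorded, both directions become straightforward.

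For $(a)\Rightarrow(b)$, let $A\subset\Omega$ be closed and convex, fix $x\in\Omega$, and set $r=F_{\Omega}(x,A)$. If $r=0$ the separation property (Proposition~\ref{prop.basicproperties}(e) combined with closedness of $A$) forces $x\in A$, so $y=x$ is the unique foot. Assume $r>0$ and suppose two distinct points $y,z\in A$ satisfy $F_{\Omega}(x,y)=F_{\Omega}(x,z)=r$. Then both lie on $\partial B^+(x,r)$, and by convexity of $A$ the whole segment $[y,z]$ lies in $A$; by convexity of $\overline{B}^+(x,r)$, it also lies in the closed ball. Strict convexity of $\Omega$ transfers through the homothety to $\overline{B}^+(x,r)$, so the relative interior of $[y,z]$ sits in the open ball, i.e.\ every such $w$ satisfies $F_{\Omega}(x,w)<r$. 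Since $w\in A$, this contradicts the definition of $r$, and uniqueness follows.

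For $(b)\Rightarrow(a)$, I argue contrapositively. Assume $\Omega$ is not strictly convex, so $\partial\Omega$ contains a nontrivial segment $[a,b]$. Fix any $x\in\Omega$ and choose $\rho>0$ small enough that the segment
\[
A \;=\; \bigl\{\, x+(1-e^{-\rho})(z-x) \,\bigm|\, z\in[a,b]\,\bigr\}
\]
is contained in $\Omega$. Then $A$ is closed and affinely convex, hence convex, and by Proposition~\ref{prop:balls-homothetic} it lies entirely on the forward sphere $\partial B^+(x,\rho)$. Consequently $F_{\Omega}(x,A)=\rho$ is achieved at every point of $A$, so uniqueness of the nearest point fails.

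The main obstacle I anticipate is the careful bookkeeping showing that strict convexity of $\Omega$ is inherited by the closed ball $\overline{B}^+(x,r)$ together with the fact that a chord between two boundary points of a strictly convex body has its relative interior in the topological interior. Proposition~\ref{prop:balls-homothetic} is exactly the right tool: a Euclidean homothety is a homeomorphism sending interior points to interior points and boundary to boundary, so both the strict convexity and the open/closed distinction between $B^+(x,r)$ and $\partial B^+(x,r)$ transfer faithfully. With that in place, everything else is elementary convex geometry.
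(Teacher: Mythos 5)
Your proof is correct and follows essentially the same route as the paper's: both directions reduce to Proposition~\ref{prop:balls-homothetic}, using strict convexity of the homothetic closed forward ball for (a)$\Rightarrow$(b) and exhibiting a nontrivial segment on a forward sphere coming from a boundary segment of $\Omega$ for (b)$\Rightarrow$(a). Your extra treatment of the case $r=0$ via the separation property implicitly assumes $\Omega$ is bounded (Proposition~\ref{prop.basicproperties}(e)), but the paper's proof tacitly makes the same restriction by simply assuming $r>0$.
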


\begin{proposition}
 Let $A$ be an affinely convex closed subset of a proper convex domain  $\Omega \subset \r^n$ and let $x\in \Omega \setminus A$.
 A point $y\in A$ is a nearest point in $A$ for $x$ if and only if either $F_{\Omega}(x,y) = 0$ or there exists a hyperplane
 $\Pi \subset \r^n$ which contains $y$, which separates $A$ and $x$ and which is parallel to a  
 support hyperplane $H$  for $\Omega$ at $a = a_{\Omega}(x,y)$.
\end{proposition}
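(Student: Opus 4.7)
The plan is to characterize the nearest-point property in purely convex-geometric terms via the forward ball, and then to combine Hahn--Banach separation with the homothetic description of forward balls from Proposition \ref{prop:balls-homothetic}. Set $r = F_{\Omega}(x,A)$. A point $y \in A$ is a nearest point for $x$ precisely when $F_{\Omega}(x,y) = r$, which is equivalent to the two conditions $A \cap B^+(x,r) = \emptyset$ and $y \in A \cap \overline{B^+(x,r)}$. If $r = 0$ we are already in the first alternative of the statement, so throughout I shall assume $r > 0$; in particular $a = a_{\Omega}(x,y) \notin H_{\infty}$.

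The key geometric ingredient is that, by Proposition \ref{prop:balls-homothetic}, the forward ball $B^+(x,r)$ is the image of $\Omega$ under the homothety $f(p) = x + \lambda(p - x)$ of ratio $\lambda = 1 - e^{-r}$ centered at $x$. A direct computation from $|y - a| = e^{-r}|x - a|$ and $y \in [x,a]$ shows $y = f(a)$. Since a homothety carries every hyperplane to a parallel one, $f$ sets up a bijection between the support hyperplanes of $\Omega$ at $a$ and the support hyperplanes of $B^+(x,r)$ at $y$, and corresponding hyperplanes are parallel to one another.

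For the forward direction, assume $y$ is a nearest point with $r > 0$. Then the convex set $A$ and the open convex set $B^+(x,r)$ are disjoint, so Hahn--Banach separation produces a hyperplane $\Pi$ with $A$ in one closed half-space and $B^+(x,r)$ in the complementary open half-space. Because $y$ lies in both $A$ and $\overline{B^+(x,r)}$, we must have $y \in \Pi$; and because $x \in B^+(x,r)$, the hyperplane $\Pi$ separates $A$ from $x$. Moreover $\Pi$ supports $B^+(x,r)$ at $y$, so $H := f^{-1}(\Pi)$ is a support hyperplane of $\Omega$ at $a$, parallel to $\Pi$. This yields the hyperplane asserted by the proposition.

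For the converse, suppose such a $\Pi$ exists, containing $y$, separating $A$ from $x$, and parallel to a support hyperplane $H$ of $\Omega$ at $a$. Since $f(H)$ is the unique hyperplane parallel to $H$ passing through $y = f(a)$, we have $\Pi = f(H)$, and $\Pi$ supports $B^+(x,r)$ at $y$ with $B^+(x,r)$ lying in the open half-space on the side of $x$. Because $\Pi$ separates $A$ from $x$, the set $A$ sits in the opposite closed half-space, and is therefore disjoint from $B^+(x,r)$. Hence $y$ minimizes the Funk distance from $x$ on $A$. The main obstacle to watch for is the bookkeeping about which half-spaces contain $A$, $x$, and $B^+(x,r)$, and the verification that the $F_{\Omega}(x,y)=0$ case really forces one into the first alternative (not the second); once the identification $y = f(a)$ and the homothety-invariance of parallelism are established, the rest is a routine application of Hahn--Banach.
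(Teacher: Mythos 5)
Your forward direction (nearest point $\Rightarrow$ separating hyperplane) is exactly the paper's argument: separate the disjoint convex sets $A$ and $B^+(x,r)$, observe that the separating hyperplane supports the forward ball at $y$, and pull it back to a support hyperplane of $\Omega$ at $a$ via the homothety of Proposition \ref{prop:balls-homothetic}. For the converse the paper argues analytically instead: it takes the supporting functional $h$ attached to $H$, notes that the separation hypothesis gives $h(x)<h(y)=\inf_{z\in A}h(z)$, and concludes from Proposition \ref{prop.dFh} and Corollary \ref{cor.dFh1} that $F_{\Omega}(x,y)=\log\bigl((1-h(x))/(1-h(y))\bigr)\le F_{\Omega}(x,z)$ for every $z\in A$. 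Your converse stays geometric, which is arguably cleaner, but as written it contains a circularity you must repair: you define the homothety $f$ with ratio $1-e^{-r}$ where $r=F_{\Omega}(x,A)$, and then invoke $y=f(a)$, i.e.\ $|y-a|=e^{-r}|x-a|$. That identity is precisely the assertion $F_{\Omega}(x,y)=r$, which is what the converse is supposed to establish, so you may not assume it there. The fix is immediate: run the converse with $\rho:=F_{\Omega}(x,y)$ in place of $r$. Then $y=f_{\rho}(a)$ holds by definition of $\rho$, so $\Pi=f_{\rho}(H)$ supports $B^+(x,\rho)$ at $y$ with $x$ on the open side; the separation hypothesis puts $A$ in the opposite closed half-space, hence $A\cap B^+(x,\rho)=\emptyset$ and $F_{\Omega}(x,z)\ge\rho$ for every $z\in A$, which is the desired conclusion. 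With that substitution your argument is complete and yields a uniform, purely convex-geometric proof of both implications, at the cost of re-deriving what Corollary \ref{cor.dFh1} packages analytically.
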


\begin{proof}
We assume $F_{\Omega}(x,y) > 0$ (otherwise, there is nothing to prove).  First, suppose there exists a hyperplane
$\Pi \subset \r^n$ containing $y$ and separating $A$ from $x$ and which is parallel to a  
 support hyperplane $H$  for $\Omega$ at $a$. Let $h$ be the corresponding supporting functional. Then
we have, from our hypothesis,
$$
  h(x) < h(y) =  \inf_{z \in A}  h(z).
$$
From Proposition \ref{prop.dFh} and Corollary \ref{cor.dFh1} we then have
$$
   F_{\Omega}(x,y) =  \log \left(\frac{1-h(x)}{1-h(y)}\right) = \inf_{z \in A} \log \left(\frac{1-h(x)}{1-h(z)}\right)
   \leq F_{\Omega}(x,A),
$$
therefore $y$ is a nearest point on $A$ for $x$.

To prove the converse, we assume that $y$ is a nearest point on $A$ for $x$. Set $r =  F_{\Omega}(x,y) = F_{\Omega}(x,A)$,
then, by definition, the forward open ball $B^+(x,r)$ and the set $A$ are disjoint. Since both sets are affinely convex, there exists a 
hyperplane $\Pi$ that separates them. Note that  $\Pi$ is then a support hyperplane at $y$ for the ball $B^+(x,r)$. 
We conclude from Proposition \ref{prop:balls-homothetic} that $\Pi$  is parallel to a  
support hyperplane $H$  for $\Omega$ at $a$.
\end{proof}

\medskip

There are several possible notions of perpendicularity in metric spaces. The following
definition is due to Busemann (see \cite[page 103]{Busemann1955}). 

\begin{definition}[Perpendicularity]\label{def:perpendicularity} 
Let $A$ be a subset of $\Omega$ and $p$ a point in $A$. 
A  geodesic  $\gamma : I \to\Omega$ is said to be 
\emph{perpendicular}\index{perpendicular} to $A$ at $p$ if the following two properties hold:
\begin{enumerate}
\item $p = \gamma (t_0)$ for some $t_0 \in I$, 
\item  for every $t  \in I$,  $p$ is a nearest point for $\gamma(t)$ on $A$.
\end{enumerate}
\end{definition}

From the previous results we then have the following

\begin{corollary}
 Let $x$ be a point in a convex domain  $\Omega$  and $a\in \partial \Omega$  be a boundary point.
 If $\Pi \subset \r^n$ is a hyperplane containing $x$, then the ray $[x,a)$  is perpendicular to $\Pi \cap \Omega$
 if and only if $\Pi$ is parallel to a support hyperplane $H_a$ of $\Omega$ at $a$.
 
 If $b\in \partial \Omega$  is another boundary point, then the line $(a,b)$ is   perpendicular to $\Pi \cap \Omega$
 if and only if $\Pi \cap (a,b) \neq \emptyset$ and 
 $\Pi$ is parallel to both a support hyperplane  $H_a$  at $a$ and a  support hyperplane  $H_b$  at $b$.
\end{corollary}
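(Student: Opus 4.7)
The plan is to apply the characterization of nearest points from the previous proposition. By the definition of perpendicularity, the ray $[x,a)$ is perpendicular to $A = \Pi \cap \Omega$ at $x$ precisely when $x$ is a nearest point on $A$ for every $z \in (x,a)$, i.e.\ $F_{\Omega}(z,x) = \inf_{y \in A} F_{\Omega}(z,y)$.

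For the implication $(\Leftarrow)$, suppose $\Pi$ is parallel to a support hyperplane $H_a$ of $\Omega$ at $a$. The hyperplane $\Pi$ itself is the natural candidate for the separating hyperplane in the previous proposition: it contains $x$, it contains all of $A$, and since $z \notin \Pi$ for $z \in (x,a)$, it strictly separates $\{z\}$ from $A$. The previous proposition then gives that $x$ is a nearest point on $A$ for $z$, the required support-hyperplane condition being supplied by the hypothesis $\Pi \parallel H_a$. Since $z$ was arbitrary on the ray, perpendicularity follows.

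For $(\Rightarrow)$, assume the ray is perpendicular. Fix any $z \in (x,a)$; the previous proposition provides a hyperplane $\Pi'$ through $x$ separating $A$ from $z$ and parallel to a support of $\Omega$ at the relevant boundary point of the line through $x$ and $a$. The key geometric step is that $A$ is open in $\Pi$ and $x$ lies in the relative interior of $A$, so any hyperplane through $x$ that keeps $A$ weakly on one side must in fact contain $A$; this forces $\Pi' = \Pi$. The parallelism of $\Pi'$ to that support then yields the desired parallelism of $\Pi$ with $H_a$.

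The second part is obtained by applying the first part to each of the two open rays $[x,a)$ and $[x,b)$ obtained by splitting the chord $(a,b)$ at its intersection point $x = \Pi \cap (a,b)$. Each half-chord contributes one parallelism condition, one at $a$ and one at $b$, and their conjunction is precisely the stated equivalence. The main obstacle in writing either part out carefully is the bookkeeping for unbounded $\Omega$: an exit point may lie in the hyperplane at infinity $H_\infty \subset \tilde{\partial}\Omega$, in which case the corresponding ``support hyperplane'' is interpreted projectively as $H_\infty$ itself, and one must check that the separation and parallelism arguments go through in this degenerate case.
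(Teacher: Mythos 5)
Your overall strategy --- reduce everything to the nearest-point proposition, observe that $\Pi$ is the only possible separating hyperplane because $x$ lies in the relative interior of $A=\Pi\cap\Omega$, and obtain the second statement by splitting the chord at $x$ into two rays --- is exactly the reduction the paper intends (it offers no further argument). But there is a genuine gap at the one step you leave implicit, namely the identification of ``the relevant boundary point'' in that proposition. For $z\in(x,a)$ the external point is $z$ and the candidate foot is $x$, so the proposition involves a support hyperplane at $a_{\Omega}(z,x)$, the exit point of the ray $R(z,x)$. That ray points from $z$ through $x$ \emph{away} from $a$, so $a_{\Omega}(z,x)$ is the \emph{second} intersection point $a^{*}$ of the line through $x$ and $a$ with $\tilde\partial\Omega$, not $a$ itself. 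Your sentence ``the required support-hyperplane condition being supplied by the hypothesis $\Pi\parallel H_a$'' is therefore unjustified: what the proposition actually demands is $\Pi\parallel H_{a^{*}}$. Carried out correctly, your argument proves that $[x,a)$ is perpendicular to $\Pi\cap\Omega$ if and only if $\Pi$ is parallel to a support hyperplane of $\Omega$ at $a^{*}$.

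This is not a removable technicality: with the paper's own conventions for feet (Definitions 7.1 and 7.4) the first assertion with $a$ in place of $a^{*}$ is false in both directions. Take $\Omega=\{u>0,\ v>0,\ u+v<4\}\subset\r^2$, $x=(1,2)$, $a=(2,2)$, so $a^{*}=(0,2)$. Using the polytope formula $F_{\Omega}(z,w)=\max\{0,\log\frac{z_1}{w_1},\log\frac{z_2}{w_2},\log\frac{4-z_1-z_2}{4-w_1-w_2}\}$, one checks that for $z=(1+t,2)$ the nearest point on $\{u+v=3\}\cap\Omega$ (the section parallel to the support line at $a$) is $\bigl(\tfrac{3(1+t)}{3+t},\tfrac{6}{3+t}\bigr)\neq x$, at distance $\log\tfrac{3+t}{3}<\log(1+t)=F_{\Omega}(z,x)$; whereas $x$ \emph{is} a nearest point on $\{u=1\}\cap\Omega$ (the section parallel to the support line at $a^{*}$). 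So your proof cannot be closed as written; one must either exchange the two endpoints in the statement or reverse the order of the arguments in the definition of a foot. The second assertion survives unchanged, because requiring both rays $[x,a)$ and $[x,b)$ to be perpendicular yields parallelism to supports at $b$ and at $a$ respectively, and the conjunction is symmetric; your splitting argument is fine there once the first part is corrected. Your remark about exit points in $H_{\infty}$ is pertinent but, in that degenerate case, $F_{\Omega}(z,x)=0$ and the proposition's first alternative makes $x$ a foot for every $\Pi$, which again is only consistent with the $a^{*}$-version of the statement.
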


\section{The infinitesimal Funk distance} \label{s:inf}
In this section, we consider a convex domain $\Omega\subset \r^n$  and a point $p$ in $\Omega$.
We define a weak distance $\Phi_p = \Phi_{\Omega,p}$ on $\r^n$ as the limit
$$
  \Phi_p(x,y) = \lim_{t\searrow 0} \ \frac{F_{\Omega,p}(p+tx,p+ty)}{t}.
$$
\begin{theorem}
The weak metric $\Phi_p$  at a point $p$ in $\r^n$ is a Minkowski weak metric in $\r^n$.
Its unit ball is the translated domain $\Omega_p = \Omega -  p$.
\end{theorem}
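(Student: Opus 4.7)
First I would reduce to the case $p = 0$ by translation: setting $\Omega_p = \Omega - p$, the Funk distance is translation invariant, so $F_{\Omega}(p+tx, p+ty) = F_{\Omega_p}(tx, ty)$, and it suffices to compute $\Phi_p(x,y) = \lim_{t \searrow 0} F_{\Omega_p}(tx, ty)/t$. The case $x = y$ is trivial, so assume $x \neq y$.

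Next, introduce the quantity $\mu = \sup\{\lambda > 0 \mid \lambda (y-x) \in \Omega_p\} \in (0, +\infty]$; by the very definition of the Minkowski functional, $\mathfrak{p}_{\Omega_p}(y-x) = 1/\mu$ (with $1/\infty = 0$). I would then apply Proposition~\ref{prop.dFh} with an auxiliary linear form $h$ satisfying $h(y) \neq h(x)$ (such an $h$ exists since $x \neq y$); writing $a_t = a_{\Omega_p}(tx, ty)$, the proposition yields
\[
F_{\Omega_p}(tx, ty) \;=\; \log \frac{h(a_t) - t\,h(x)}{h(a_t) - t\,h(y)}.
\]

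Now split into two cases. If $\mu < \infty$, then $a_0 := \mu(y-x) \in \partial \Omega_p$, and a direct continuity argument (the line through $tx$ and $ty$ converges to the line through $0$ and $y-x$, and intersecting with $\partial \widetilde{\Omega}_p$ is continuous in this setting) gives $a_t \to a_0$ as $t \searrow 0$. Using $\log(1+u) = u + O(u^2)$,
\[
\frac{F_{\Omega_p}(tx, ty)}{t} \;=\; \frac{1}{t} \log\!\left(1 + \frac{t\,(h(y)-h(x))}{h(a_t) - t\,h(y)}\right) \;\xrightarrow[t \searrow 0]{}\; \frac{h(y) - h(x)}{h(a_0)} \;=\; \frac{1}{\mu},
\]
the last equality because $h(a_0) = \mu\,h(y-x)$. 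If instead $\mu = +\infty$, the ray $\mathbb{R}_{\geq 0}(y-x)$ lies in the open convex set $\Omega_p$, so by the standard recession-cone property for convex sets the translated ray $tx + \mathbb{R}_{\geq 0}(y-x)$ is also contained in $\Omega_p$ for every $t>0$ small enough. Hence $a_t \in H_\infty$, so $F_{\Omega_p}(tx, ty) = 0$ and $\Phi_p(x,y) = 0 = 1/\mu$.

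In every case we obtain $\Phi_p(x,y) = \mathfrak{p}_{\Omega_p}(y-x)$. Since $\mathfrak{p}_{\Omega_p}$ is a weak Minkowski norm whose open unit ball is exactly $\Omega_p$, this shows that $\Phi_p$ is the Minkowski weak metric on $\mathbb{R}^n$ with unit ball $\Omega_p$, proving the theorem. The only non-routine step is the continuity $a_t \to a_0$ in the bounded case (and its projective counterpart when the ray escapes to infinity); everything else is a one-variable Taylor expansion.
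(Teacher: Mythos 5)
Your proof is correct, but it takes a genuinely different route from the paper's. The paper starts from the variational formula of Corollary \ref{cor.dFh2}, writes $F_{\Omega}(p+tx,p+ty)=\sup_{h\in\mathcal{S}_{\Omega}}\varphi_h(t)$, expands each $\varphi_h$ to second order with a bound on $\varphi_h''$ that is \emph{uniform in $h$}, and uses this to interchange the supremum with the limit $t\searrow 0$, obtaining $\Phi_p(x,y)=\sup_{h\in\mathcal{S}_{\Omega}} h(y-x)/(1-h(p))$; the unit ball is then identified by unwinding that supremum. You instead fix a single auxiliary linear form, track the actual boundary point $a_t=a_{\Omega_p}(tx,ty)$ via Proposition \ref{prop.dFh}, and identify the limit directly with the gauge, $\Phi_p(x,y)=\mathfrak{p}_{\Omega_p}(y-x)=1/\mu$. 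This buys you a proof with no sup/limit interchange and hence no uniform second-derivative estimate, but it shifts the burden onto the continuity of $t\mapsto a_t$ at $t=0$, where both arguments collapse to the same point -- a situation not covered by the continuity argument in Proposition \ref{eq:conv-F}. That step is the one real gap to fill, and it does hold: setting $\tau(q,u)=\sup\{s>0 : q+su\in\Omega_p\}$, lower semicontinuity follows from openness and upper semicontinuity (where $\tau$ is finite) from the line-segment principle for convex sets; you also correctly observe that $h(a_0)=\mu\,h(y-x)\neq 0$, so the denominator stays bounded away from zero. The two answers agree because the supporting functionals of $\Omega-p$ are precisely the $h/(1-h(p))$ for $h\in\mathcal{S}_{\Omega}$, whence $\sup_h h(v)/(1-h(p))=\mathfrak{p}_{\Omega-p}(v)$. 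Your version makes the ``tautological'' description of the unit ball immediate; the paper's version yields the supporting-functional formula for $\Phi_p$ as a useful byproduct.
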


\begin{proof}
Choose a supporting functional $h$ for $\Omega$ and set   
$$
 \varphi_h (t) = \log\left(\frac{1-h(p+tx)}{1-h(p+ty)} \right)
 = \log\left(1+\frac{th(y-x)}{1-h(p)- th(y)} \right).
$$
The first two derivatives of this functions are
$$
 \varphi_h'(t) =  \frac {\left(1-{h(p)} \right)  h(y-x) }{ \left(1-h(p)- th(x) \right)  \left( 1-h(p)- th(y)\right)},
 $$
and
$$   
  \varphi_h''(t) =   \frac {  \left((1-h(p))(h(x)+h(y)) - 2t \cdot  h(x) h(y)\right) \left(1-{h(p)} \right)  h(y-x)}
  {\left(1-h(p)- th(x) \right)^2  \left( 1-h(p)- th(y)\right)^2}.
$$
We have in particular
$$
  \varphi'_h (0) =  \frac{h(y-x)}{1- h(p)},
$$
and the second derivative is  uniformly bounded in some neighborhood of $p$.
More precisely, given a relatively compact neighborhood $U\subset \overline{U} \subset  \Omega$
of $p$, one can find a constant $C$ which depends on $U$ but not on $h$ such that
$$
 | \varphi_h''(t)| \leq  C,
$$ 
for any $x,y \in  U$  and $|t| \leq 1$ and any support function $h$. We have, from Taylor's formula,
$$
 \varphi_h (t) =  t \cdot  \frac{h(y-x)}{1- h(p)} + t^2\rho(x,y,h),
$$
where $|\rho(x,y,h)| \leq C$.  
Using Corollary \ref{cor.dFh2} we have
$$
 F_{\Omega,p}(p+tx,p+ty) = \sup_h  \varphi_h (t) =  \sup_h    \frac{t  h(y-x)}{1- h(p)}  + O(t^2),
$$
where the supremum is taken over  the set $\mathcal{S}_{\Omega} $ of all support functions for $\Omega$.
Therefore 
$$
 \Phi_p(x,y) =     \sup_{h\in \mathcal{S}_{\Omega}} \frac{h(y-x)}{1- h(p)}.
$$
We then see that $ \Phi_p(x,y) $ is weak Minkowski distance (see Chapter 1 in this handbook).
\begin{eqnarray*}
 \Phi_p(0,y) \leq 1 &  \Leftrightarrow & \sup_{h \in \mathcal{S}_{\Omega}}  \frac{h(y) }{1- h(p)}  \leq  1
  \\ &  \Leftrightarrow &
h(y) \leq 1 - h(p) \text{ for all support functions $h$ of $\Omega$}
  \\ &  \Leftrightarrow &
h(p+y) \leq 1  \text{ for all   $h$}
  \\ &  \Leftrightarrow &  
 y \in \overline{\Omega}-p, 
\end{eqnarray*}
this means that the unit ball of $\Phi_p$ is the translate of $\Omega$ by $-p$.
\end{proof}

\medskip

\begin{remark}
The Funk metric of a convex domain $\Omega$
 is in fact Finslerian, and the previous theorem means that the Finslerian unit ball
 at any point $p$  coincides with the domain $\Omega$ itself with the point $p$
 as its center. This is why the Funk metric was termed \emph{tautological}\index{tautological!Finsler structure} in \cite{PT2}.

 The Finslerian approach to Funk geometry is developed in
 \cite{TroyanovFinslerian}. 
 \end{remark}

\section{Isometries} \label{isom}

It is clear from its definition that the Funk metric is invariant under affine transformation. Conversely, we have the following:

\begin{proposition}\label{prop.Isometriesl} 
Let $\Omega_1$ and $\Omega_2$ be two bounded convex domains in $\r^n$.  Assume that there exists a Funk isometry
$f : U_1 \to U_2$, where $U_i$ is an open convex subset of $\Omega_i$,  ($i=1,2$). If $\Omega_2$ is strictly convex, then 
$f$ is the restriction of  a global affine map of $\r^n$ that maps $\Omega_1$ to  $\Omega_2$.
\end{proposition}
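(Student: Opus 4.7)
The plan is to first show that $f$ is the restriction of a globally defined affine map $\hat{f}:\mathbb{R}^n\to\mathbb{R}^n$, and then to verify that $\hat{f}(\Omega_1)=\Omega_2$ using the rigidity of Funk balls. First I would argue that $f$ sends Euclidean segments of $U_1$ onto Euclidean segments of $U_2$ while respecting their parametrization. For any $x,y\in U_1$, the segment $[x,y]\subset U_1$ is an $F_{\Omega_1}$-geodesic by the projectivity of the Funk metric (Proposition \ref{prop.basicproperties}(c)), so its image under the isometry $f$ is an $F_{\Omega_2}$-geodesic in $U_2$. Since $\Omega_2$ is strictly convex, Corollary \ref{cor.setgeodcomplete} ensures that $F_{\Omega_2}$ is uniquely geodesic, and the unique geodesic from $f(x)$ to $f(y)$ is the Euclidean segment $[f(x),f(y)]$; continuity of $f$ (automatic from Proposition \ref{prop.topology}, since the Funk and Euclidean topologies coincide on bounded domains) forces $f([x,y])=[f(x),f(y)]$. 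For the affine parametrization, given $z=(1-t)x+ty$ on $[x,y]$, Corollary \ref{cor.dFh3} expresses $t$ explicitly in terms of $F_{\Omega_1}(x,y)$ and $F_{\Omega_1}(x,z)$ via equation (\ref{eq.ratio1}); the same formula applied in $\Omega_2$ yields the affine parameter $t'$ for $f(z)$ on $[f(x),f(y)]$. Since $f$ preserves Funk distances, $t'=t$, that is, $f((1-t)x+ty)=(1-t)f(x)+tf(y)$ for every $t\in[0,1]$.

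This convex-combination identity on the open convex set $U_1$ identifies $f$ as the restriction of a unique affine map $\hat{f}:\mathbb{R}^n\to\mathbb{R}^n$: fixing $n+1$ affinely independent points of $U_1$ determines $\hat{f}$, and the equality $\hat{f}=f$ propagates from those vertices to the whole simplex by iterating the segment rule, then to all of $U_1$ by openness. To conclude that $\hat{f}(\Omega_1)=\Omega_2$, I would pick $p\in U_1$ and $r>0$ small enough that $B^+_{\Omega_1}(p,r)\subset U_1$. By Proposition \ref{prop:balls-homothetic}, this forward ball equals $p+(1-e^{-r})(\Omega_1-p)$, and the image ball $f(B^+_{\Omega_1}(p,r))=B^+_{\Omega_2}(f(p),r)$ equals $f(p)+(1-e^{-r})(\Omega_2-f(p))$. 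Since $\hat{f}$ is affine, its action on the former set produces $f(p)+(1-e^{-r})(\hat{f}(\Omega_1)-f(p))$, which matches the latter set precisely when $\hat{f}(\Omega_1)=\Omega_2$.

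The main obstacle lies in the very first step: once one knows that $f$ maps segments to segments preserving their affine structure, the rest is essentially mechanical. Strict convexity of the target $\Omega_2$ (rather than $\Omega_1$) is exactly what allows us to invoke uniqueness of geodesics on the image side, and this is the crucial input; without it, the image of a Euclidean segment under $f$ could \emph{a priori} be a non-affine Funk geodesic in $U_2$, and the ratio argument based on Corollary \ref{cor.dFh3} would break down.
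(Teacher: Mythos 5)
Your argument is correct and follows essentially the same route as the paper: the paper deduces that $f(z)$ lies on the line through $f(x)$ and $f(y)$ directly from the equality case of the triangle inequality in the strictly convex domain $\Omega_2$ (rather than via the packaged unique-geodesicity statement of Corollary \ref{cor.setgeodcomplete}, which rests on the same fact), and then likewise uses Corollary \ref{cor.dFh3} to identify the affine parameters. Your closing argument that $\hat{f}(\Omega_1)=\Omega_2$ via the homothety description of forward balls (Proposition \ref{prop:balls-homothetic}) is a welcome addition, since the paper leaves that part of the statement implicit.
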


\begin{proof}
Let $x$ and $y$ be two distinct points in $U_1$. Then, for any $z \in [x,y]$, we have
\begin{equation*}
\begin{split}
  F_{\Omega_2}(f(x),f(y)) -  F_{\Omega_2}(&f(x),f(z)) -  F_{\Omega_2}(f(z),f(y)) 
\\  &= \    
F_{\Omega_1}(x,y) - F_{\Omega_1}(x,z) - F_{\Omega_1}(z,y) = 0.
\end{split}
\end{equation*}
Since $\Omega_2$ is assumed to be strictly convex, this implies that $f(z)$ belongs to the line
through $f(x)$ and $f(y)$.  We can thus define the real numbers $t$ and $s$ by
$$
 z = x+t(y-x),  \quad  \text{and}  \quad  f(z) = f(x) + s(f(y)-f(x)).
$$
It now follows from Corollary  \ref{cor.dFh3} and
the fact that $f$ is an isometry that $s=t$.
We thus have proved that for any $x,y \in U_1$ and any $t \in [0,1]$ we have
\begin{equation}\label{ }
 f(x+t(y-x)) =  f(x) + t(f(y)-f(x)).
\end{equation}
This relation easily implies that $f$ is the restriction of a global affine mapping.
\end{proof}

We immediately deduce the following:

\begin{corollary}\label{cor:iso} 
The group of Funk isometries of a strictly convex  bounded  domains  $\Omega \subset \r^n$
coincides with the subgroup of the affine group of $\r^n$ leaving $\Omega$ invariant.        
\end{corollary}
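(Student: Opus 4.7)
The plan is to establish the two inclusions between the Funk isometry group of $\Omega$ and the affine stabilizer $\mathrm{Aff}(\Omega) = \{g \in \mathrm{Aff}(\r^n) \mid g(\Omega)=\Omega\}$. The nontrivial direction is already essentially packaged as Proposition \ref{prop.Isometriesl}, so the corollary will follow by assembling pieces that are already on the table.

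First I would verify $\mathrm{Aff}(\Omega) \subset \mathrm{Isom}(\Omega, F_\Omega)$. Given $g\in \mathrm{Aff}(\Omega)$, extend $g$ to a projective transformation of $\mathbb{RP}^n$; since $g$ is affine it fixes $H_\infty$ setwise, and since $g(\Omega)=\Omega$ it preserves $\tilde \partial \Omega$. Therefore the ray $R(x,y)$ is carried to $R(g(x),g(y))$ and we have $g(a_\Omega(x,y)) = a_\Omega(g(x),g(y))$. Because affine maps preserve ratios of collinear points (and the convention $\infty/\infty=1$ is invariant under sending $H_\infty$ to $H_\infty$), the defining formula
\[
F_\Omega(x,y) = \log \frac{|x-a_\Omega(x,y)|}{|y-a_\Omega(x,y)|}
\]
yields $F_\Omega(g(x),g(y)) = F_\Omega(x,y)$, so $g$ is an isometry.

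For the converse inclusion $\mathrm{Isom}(\Omega, F_\Omega) \subset \mathrm{Aff}(\Omega)$, let $f : \Omega \to \Omega$ be a Funk isometry. I would apply Proposition \ref{prop.Isometriesl} with $\Omega_1 = \Omega_2 = \Omega$ and $U_1 = U_2 = \Omega$; the hypothesis that $\Omega$ is strictly convex is precisely what is needed to invoke the proposition (with $\Omega_2 = \Omega$ strictly convex). The conclusion provides a global affine transformation $\tilde f : \r^n \to \r^n$ with $\tilde f|_\Omega = f$ and $\tilde f(\Omega) = \Omega$, so $f \in \mathrm{Aff}(\Omega)$. Running the same argument for $f^{-1}$ shows $\tilde f$ is an affine bijection, hence an element of the affine group stabilizing $\Omega$.

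The main obstacle has already been overcome in the proof of Proposition \ref{prop.Isometriesl}, where strict convexity was used to force $f$ to send affine segments to affine segments (via the rigidity of the equality case in the triangle inequality, combined with the division-ratio formula of Corollary \ref{cor.dFh3}). The only remaining care is bookkeeping: to ensure that "isometry" is interpreted as a bijection $\Omega \to \Omega$ preserving $F_\Omega$ in both arguments (which matches the hypothesis of the proposition), and to note that the two verifications above together give the equality of groups claimed by the corollary.
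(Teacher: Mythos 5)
Your proposal is correct and follows exactly the paper's route: the paper derives the corollary immediately from Proposition \ref{prop.Isometriesl} (applied with $\Omega_1=\Omega_2=U_1=U_2=\Omega$), together with the observation, stated at the start of Section \ref{isom}, that the Funk metric is affinely invariant. Your extra care about the easy inclusion and about $f^{-1}$ is just a more explicit write-up of the same argument.
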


\begin{remark}
The conclusion of this corollary may fail for unbounded domains. If for instance $\Omega$
is the upper half plane $\{ x_2 >0\}$ in $\r^2$, then $F_{\Omega}(x,y) = \max\{0, \log (x_2/y_2)\}$
and any map $f : \Omega \to \Omega$ of the type $f(x_1,x_2) = (ax_1+b, \psi (x_2)$, where $a\neq 0$
and $\psi : \r \to \r$ is arbitrary, is an isometry.
\end{remark}
 
\section{A projective viewpoint on Funk geometry}\label{s:p}

In this section we consider the following generalization of Funk geometry:
We say that a subset $U \subset \mathbb{RP}^n$ is \emph{convex} if
it does not contain any full projective line and 
if the 
intersection of any projective line $L \subset  \mathbb{RP}^n$ with $U$ is a connected set. 
If  $U$ and $\Omega$   are connected domains in $\mathbb{RP}^n$ with 
$\Omega \subset U$, and if $x,y$ are two distinct points in $\Omega$. We denote
by $a_{\Omega}(x,y) \in \partial \Omega$ and $\omega_{\Omega}(x,y) \in \partial U$
the boundary points on the line $L$ through $x$ and $y$ appearing in the order
$a,y,x,\omega$.

\begin{definition} The \emph{relative Funk metric}\index{relative Funk metric}\index{Funk metric!relative}\index{metric!relative Funk} 
for $\Omega \subset U$ is defined
by $ F_{ \Omega,U}(x,x) =0$
 and by  
 $$
  F_{\Omega,U}(x,y) =  \log  \left(\frac{\vert y-\omega \vert}{\vert x-\omega \vert}  \cdot \frac{\vert x-a \vert}{\vert y-a \vert} \right),
 $$
 if $x\neq y$. 
 \end{definition} 
 
The relative Funk metric is a projective weak metric, it is  invariant under projective transformations in the sense that if $f : \mathbb{RP}^n \to \mathbb{RP}^n$ is a 
projective transformation, then
$$
    F_{\Omega,U}(x,y) =   F_{f(\Omega),f(U)}(f(x),f(y)). 
$$
Observe also that if \ $U\subset \r^n$ is a proper convex domain, then
$$
    F_{\Omega,U}(x,y) =   F_{\Omega }(x,y) + {}^r F_{U}(x,y). 
$$ 
 
 \begin{lemma}
In the case $U = \r^n$, we have 
$$
    F_{\Omega,U}(x,y) =   F_{\Omega }(x,y).
$$ 
\end{lemma}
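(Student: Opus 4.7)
The plan is to observe that when $U = \r^n$, its closure in $\mathbb{RP}^n$ is $\widetilde{U} = \r^n \cup H_\infty$, so the "outer" boundary point $\omega = \omega_{\Omega}(x,y)$ appearing in the definition of $F_{\Omega,U}$ is forced to be the (unique) point at infinity of the projective line $L$ through $x$ and $y$. Thus I would first reduce the problem to showing that the factor $|y-\omega|/|x-\omega|$ contributes nothing to the formula.

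First I would handle the generic case $F_\Omega(x,y) > 0$, so that $a = a_\Omega(x,y) \in \partial\Omega \subset \r^n$ is a finite point. Here $\omega \in H_\infty$ while $a \not\in H_\infty$, so the two boundary points are distinct, the ordering $a, y, x, \omega$ along $L$ is meaningful, and the definition of $F_{\Omega,U}$ applies directly. Invoking the same convention $\infty/\infty = 1$ that was adopted in Definition \ref{def:Funk} (or, more rigorously, applying Proposition \ref{prop.dFh} to any linear form $h$ which is non-constant on $L$: the ratio $(h(\omega)-h(x))/(h(\omega)-h(y))$ tends to $1$ as one moves the evaluation point to infinity along $L$), one gets $\log\bigl(|y-\omega|/|x-\omega|\bigr) = 0$. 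The remaining factor $\log\bigl(|x-a|/|y-a|\bigr)$ is, by Definition \ref{def:Funk}, precisely $F_\Omega(x,y)$, which is the desired equality.

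Next I would treat the degenerate case $F_\Omega(x,y) = 0$, that is, $a_\Omega(x,y) \in H_\infty$. Here the ray $R(x,y)$ is entirely contained in $\Omega \subset U = \r^n$, so in fact $a = \omega$ (both are the unique point at infinity of $L$). The definition of $F_{\Omega,U}(x,y)$ then produces $\log(1 \cdot 1) = 0$ by the same convention, and this matches $F_\Omega(x,y) = 0$.

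I do not expect any genuine obstacle here: the lemma is essentially a bookkeeping verification that the projective definition of $F_{\Omega,U}$ specializes to the affine definition of $F_\Omega$ once the outer boundary is the hyperplane at infinity. The one point requiring care is to be consistent about the convention at infinity, which is why I would prefer to present the first factor's vanishing via a linear-form limit (as in the proof of Proposition \ref{prop.dFh}) rather than relying solely on the symbolic convention $\infty/\infty=1$.
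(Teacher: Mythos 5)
Your proof is correct and is essentially the same as the paper's: both observe that $\partial U = H_\infty$ forces $\omega$ to be the point at infinity of the line through $x$ and $y$, so the factor $\vert y-\omega\vert/\vert x-\omega\vert$ equals $1$ and the relative Funk metric reduces to $F_\Omega$. The paper states this in one line; your case split and the linear-form justification of the convention $\infty/\infty=1$ are harmless elaborations of the same idea.
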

 
\begin{proof}
 If $U = \r^n$, then its boundary is the hyperplane at infinity $H_{\infty}$ and thus
$\frac{\vert y-\omega \vert}{\vert x-\omega \vert} = 1$ for any $x,y \in \Omega$. 
\end{proof}

Recall that there is no preferred hyperplane in projective space. Therefore, \emph{the classical Funk geometry is a special case of the relative Funk geometry where the 
englobing domain $U \subset \mathbb{RP}^n$ is the complement of a hyperplane.} Such a set $U$ is sometimes called an \emph{affine patch}.\index{affine patch} 

\section{Hilbert geometry}\label{s:H}
 
\begin{definition} The \emph{Hilbert metric}\index{Hilbert metric} in 
a proper convex domain  $\Omega \subset \r^n$ is defined as
$$
  H_{\Omega}(x,y) =  \frac{1}{2}  \left(F_{\Omega }(x,y) + {}^r F_{\Omega}(y,x)\right),
$$
where $\Omega$ is considered as a subset of an affine patch  $U\subset \mathbb{RP}^n$.
 \end{definition} 
This metric is a projective weak metric. Note that for $x\neq y$ we have 
$$
    H_{\Omega}(x,y) =  
    \frac{1}{2} \log  \left(\frac{\vert y-b \vert}{\vert x-b \vert}  \cdot \frac{\vert x-a \vert}{\vert y-a \vert} \right),
$$
where $a=_{\Omega }(x,y)$ and $b=a_{\Omega }(y,x)$.
The expression inside the logarithm is the cross ratio of the points $b,x,y,a$,
therefore the Hilbert metric is invariant by projective transformations.

\begin{figure}[h]
\begin{center}
\begin{picture}(10,70)  
\thicklines
   \closecurve(0,-10 , 70,30,  20,60)
   \put(0,-10){\line(3,5){50}}
   \put(0,-10){\circle*{3}}
   \put(76,34){$\Omega$ } 
     \put(50,73){\circle*{3}}
     \put(30,40){\circle*{3}}
     \put(9,5){\circle*{3}}         
     \put(-6,-17){$b$}      
     \put(45,76){$a$}
     \put(21,38.6){$y$} 
     \put(0,3.4){$x$}    
     \end{picture} \bigskip
   \caption{The Hilbert metric} 
  \end{center}  
\end{figure}
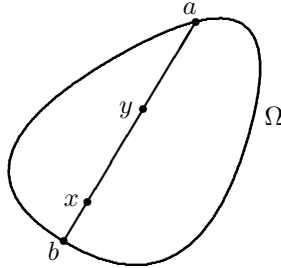

Note also that the Hilbert metric coincides with one half the relative Funk
metric of the domain $\Omega$ with respect to itself:
$$
  H_{\Omega}(x,y) =  \frac{1}{2}  F_{\Omega,\Omega}(x,y), 
$$

In the case where $\Omega$ is the Euclidean unit ball, the Hilbert distance coincides with the
Klein model\index{Klein model} (also called the \emph{Beltrami-Cayley-Klein model}\index{Beltrami-Cayley-Klein model}) of hyperbolic space. We refer to Sections 2.3--2.6 in  \cite{Thurston}
for a nice  introduction to Klein's model.
In the case of a convex polytope  defined by the linear inequalities 
$\phi_j(x) < s_j$, \  $1\leq j \leq k$,  we have
$$
 H_{\Omega}(x,y) = \max_{1\leq i, j \leq k}  \frac{1}{2}\cdot \log 
 \left(\frac{s_i-\phi_j(y)}{s_i-\phi_j(x)} \cdot \frac{s_j-\phi_j(x)}{s_j-\phi_j(y)}\right).
$$  

\medskip

Applying our investigation on Funk geometry immediately gives a number of
results on Hilbert geometry. In particular, applying Proposition \ref{prop.basicproperties}
we get:

\begin{proposition}\label{prop.Hbasicproperties}
 The Hilbert metric in a convex domain $\Omega \neq \r^n$ satisfies the following properties:
\begin{enumerate}[{\rm (a)}]  
  \item   $H_{\Omega}(x,y) \geq 0$ and   $H_{\Omega}(x,x) = 0$   for all $x,y \in \Omega$.
  \item $H_{\Omega}(x,z) \leq H_{\Omega}(x,y)+H_{\Omega}(y,z)$   for all $x,y,z \in \Omega$.   
  \item $H_{\Omega}$ is projective, that is, $H_{\Omega}(x,z) = H_{\Omega}(x,y)+H_{\Omega}(y,z)$
   whenever $z$ is a point on the affine segment $[x,y]$.   
  \item  The weak metric metric $H_{\Omega}$ is   symmetric, that is,  $H_{\Omega}(x,y) = H_{\Omega}(y,x)$ for any $x$ and $y$.
  \item  The weak metric  $H_{\Omega}$ is separating, that is,  
  $x\neq y \Rightarrow H_{\Omega}(x,y) >  0 $,  if and only if the domain $\Omega$ does not contain any affine line.
  \item  The weak metric metric $H_{\Omega}$ is unbounded.
    \end{enumerate}
 \end{proposition}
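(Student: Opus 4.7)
The plan is to deduce each of the six assertions directly from the corresponding property of the Funk metric proved in Proposition \ref{prop.basicproperties}, by applying it in both orderings of the arguments and using the defining identity
\[
 H_\Omega(x,y) \;=\; \tfrac12\bigl(F_\Omega(x,y) + F_\Omega(y,x)\bigr).
\]
Symmetry (d) is immediate from this identity, as is non-negativity in (a) together with $H_\Omega(x,x)=0$. The triangle inequality (b) follows by writing the Funk triangle inequality for the triples $(x,y,z)$ and $(z,y,x)$ and averaging. For projectivity (c), when $y$ lies on the affine segment joining $x$ and $z$, Proposition \ref{prop.basicproperties}(c) yields two Funk equalities, one for each ordering, whose sum divided by two is the Hilbert additivity. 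Unboundedness (f) is inherited via the obvious bound $H_\Omega(x,y)\geq\tfrac12 F_\Omega(x,y)$ applied to a divergent sequence obtained from Proposition \ref{prop.basicproperties}(f).

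The only statement requiring a genuine argument is the separating criterion (e), where I expect the actual work to lie. For the ``if'' direction, suppose $\Omega$ contains an affine line $L$ and pick two distinct points $x,y\in L$: the whole line through them lies in $\Omega$, so both $a_\Omega(x,y)$ and $a_\Omega(y,x)$ are the two ideal points of $L$ in $\mathbb{RP}^n$ and hence lie on $H_\infty$; by the convention in Definition \ref{def:Funk}, both Funk distances vanish and so $H_\Omega(x,y)=0$ despite $x\neq y$. Conversely, suppose $H_\Omega(x,y)=0$ with $x\neq y$; non-negativity of $F_\Omega$ forces $F_\Omega(x,y)=F_\Omega(y,x)=0$, hence $a_\Omega(x,y)$ and $a_\Omega(y,x)$ both lie on $H_\infty$. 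This means that neither ray $R(x,y)$ nor $R(y,x)$ meets $\partial\Omega$ at a finite point, and since both rays contain points of the open convex set $\Omega$, a connectedness argument shows they are entirely contained in $\Omega$; their union is the full affine line through $x$ and $y$.

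The main obstacle, to the extent there is one, is the passage from ``$F_\Omega(x,y)=0$'' to ``$R(x,y)\subset\Omega$'' used in the converse direction of (e). This is however essentially a restatement of the definition of $a_\Omega$ as the first intersection of the extended ray $\widetilde R(x,y)$ with $\tilde\partial\Omega$: its being at infinity means the ray has no finite boundary point, so by connectedness the ray either misses $\Omega$ entirely or lies entirely inside it, and the latter holds because $x\in\Omega$ is on the ray.
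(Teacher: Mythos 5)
Your proposal is correct and follows exactly the route the paper intends: the paper's entire ``proof'' is the one-line remark that the proposition follows from the definitions and from Proposition \ref{prop.basicproperties}, and your argument is just that deduction carried out in detail (averaging the two orderings for (a)--(d) and (f), and handling the separating criterion (e) via the observation that $F_\Omega(x,y)=0$ with $x\neq y$ forces $a_\Omega(x,y)\in H_\infty$, i.e.\ $R(x,y)\subset\Omega$). All the steps, including the connectedness argument for (e), are sound.
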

 
The proof of this proposition easily follows from the definitions and from  Proposition \ref{prop.basicproperties}.

\begin{proposition}
 If the  convex domain $\Omega \neq \r^n$  does not contain any affine line, then $H_{\Omega}$ is a metric
 in the classical sense. Furthermore, it is complete.
\end{proposition}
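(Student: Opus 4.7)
The plan is as follows. The metric axioms for $H_{\Omega}$ (non-negativity, symmetry, triangle inequality, and separation) are already collected in Proposition~\ref{prop.Hbasicproperties}; under the hypothesis that $\Omega$ contains no affine line, property (e) upgrades $H_{\Omega}$ to a separating symmetric distance in the classical sense. So the substantive content is completeness.

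For completeness, let $\{x_k\}$ be a Cauchy sequence for $H_{\Omega}$. Since it is Cauchy, it is bounded: fix $x_0 \in \Omega$ and $R<\infty$ with $H_{\Omega}(x_0,x_k)\le R$ for all $k$. Using the compactness of $\widetilde{\Omega}\subset \mathbb{RP}^n$, extract a subsequence (still denoted $\{x_k\}$) converging in $\mathbb{RP}^n$ to some $x^*\in\widetilde{\Omega}$. The key step is to show $x^*\in \Omega$, and then continuity of $H_\Omega$ together with the Cauchy property will upgrade subsequential convergence to full convergence.

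Suppose for contradiction that $x^*\in \widetilde{\partial}\Omega$. Consider the projective line $L$ through $x_0$ and $x^*$. The set $L\cap \widetilde{\Omega}$ is a closed connected arc with two endpoints $a,b\in \widetilde{\partial}\Omega$. The hypothesis that $\Omega$ contains no affine line forces $a\neq b$: otherwise $L\setminus\{a\}$ would be a full affine line contained in $\Omega$. Without loss of generality $a=x^*$. The projective lines $L_k$ through $x_0$ and $x_k$ converge to $L$, and their endpoints $a_k=a_\Omega(x_0,x_k)$ and $b_k=a_\Omega(x_k,x_0)$ on $\widetilde{\partial}\Omega$ converge to $a$ and $b$ respectively. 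Writing
$$
 H_\Omega(x_0,x_k)=\tfrac{1}{2}\log\!\left(\frac{|x_0-a_k|}{|x_k-a_k|}\cdot\frac{|x_k-b_k|}{|x_0-b_k|}\right),
$$
(with the convention that any ratio involving a point at infinity equals $1$), we observe that the second ratio stays bounded away from $0$ and $\infty$ since $x_k\to a$ while $b_k\to b\neq a$. For the first ratio, since $x_k\to a$ along lines converging to $L$ and $a_k\to a$, the Funk-type quantity $|x_0-a_k|/|x_k-a_k|$ diverges to $+\infty$ (unless $a\in H_\infty$, in which case the role is symmetrically played by the $b$-side using that $x_k$ escapes to infinity while $x_0$ does not). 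In either case $H_\Omega(x_0,x_k)\to \infty$, contradicting the bound $R$.

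Hence $x^*\in \Omega$. By continuity of $H_\Omega$ on $\Omega\times\Omega$ (immediate from Proposition~\ref{eq:conv-F} applied to both orders of arguments), $H_\Omega(x^*,x_k)\to 0$ along the subsequence, and the Cauchy property then implies the full sequence converges to $x^*$. The main obstacle is the boundary-at-infinity case when $\Omega$ is unbounded: the no-affine-line hypothesis is precisely what guarantees that on every line $L$ through $x_0$ the two endpoints $a,b\in \widetilde{\partial}\Omega$ are distinct, which is what prevents both factors defining $H_\Omega$ from being trivial simultaneously and forces the blow-up.
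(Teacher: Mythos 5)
The paper states this proposition without proof (it is followed immediately by the definition of a sharp convex domain), so there is no argument of the authors' to compare yours against; your proposal has to stand on its own, and it does. The reduction of the first claim to Proposition~\ref{prop.Hbasicproperties}(e) is exactly right, and the completeness argument is sound: a Cauchy sequence is $H_\Omega$-bounded, a subsequence converges in the compact set $\widetilde{\Omega}\subset\mathbb{RP}^n$, and the dichotomy you run on the limit $x^*\in\widetilde{\partial}\Omega$ does produce $H_\Omega(x_0,x_k)\to\infty$ in both the finite and the at-infinity case. Two details deserve to be made explicit. First, the convergence $a_k\to a$ and $b_k\to b$ of the chord endpoints is the continuity of the radial function $u\mapsto r(u)=\sup\{t: x_0+tu\in\Omega\}$ of the convex domain at $x_0$, valued in $(0,\infty]$; this is true (lower semicontinuity because $\Omega$ is open, upper semicontinuity because $s>r(u)$ forces $x_0+su\notin\overline{\Omega}$) but is not quoted anywhere in the paper and should be justified in one line. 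Second, in the case $a=x^*\in H_\infty$ you should say explicitly that $b$ is then a \emph{finite} boundary point because the projective line $L$ meets $H_\infty$ in a single point and $a\neq b$ by the no-affine-line hypothesis; this is precisely what makes $|x_k-b_k|/|x_0-b_k|\to\infty$ work. A small simplification: since $F_\Omega\geq 0$ in both orders, $H_\Omega(x_0,x_k)\geq\tfrac12\max\{F_\Omega(x_0,x_k),F_\Omega(x_k,x_0)\}$, so it suffices to show that one of the two factors blows up and you can drop the discussion of the other factor being bounded away from $0$ and $\infty$.
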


 A convex domain  which does not contain any affine line is called a \emph{sharp} convex domain.\index{sharp convex domain}\index{convex domain!sharp}

\medskip

From Theorem \ref{th.triangle_ineq}, we deduce the following necessary and sufficient condition for the equality case in the triangle inequality:

\begin{theorem} 
Let $x$, $y$ and $z$ be three points in a proper convex domain $\Omega$. We have
$
 F_{\Omega}(x,y) + F_{\Omega}(y,z) = F_{\Omega}(x,z) 
$
if and only if both triple of boundary points  $a_{\Omega}(x,y), a_{\Omega}(y,z), a_{\Omega}(x,z) $
and $a_{\Omega}(y,x), a_{\Omega}(z,y), a_{\Omega}(z,x) $
are aligned in $ \mathbb{RP}^n$.  
 \end{theorem}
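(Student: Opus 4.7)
The plan is to apply Theorem \ref{th.triangle_ineq} twice, once in each of the two orderings of the triple $(x,y,z)$, since each application of that theorem characterizes equality in the Funk-triangle inequality via alignment of exactly three boundary points. The statement here involves two such triples, so it should arise from two coupled applications.

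For the sufficient direction, suppose both triples are aligned in $\mathbb{RP}^n$. Then in particular $a_{\Omega}(x,y), a_{\Omega}(y,z), a_{\Omega}(x,z)$ are aligned, and Theorem \ref{th.triangle_ineq} immediately gives $F_{\Omega}(x,y)+F_{\Omega}(y,z)=F_{\Omega}(x,z)$. Only the alignment of the first triple is used for this direction.

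For the necessary direction, assume the stated equality. Theorem \ref{th.triangle_ineq} applied directly gives the alignment of the first triple $a_{\Omega}(x,y), a_{\Omega}(y,z), a_{\Omega}(x,z)$. To obtain the alignment of the second triple $a_{\Omega}(y,x), a_{\Omega}(z,y), a_{\Omega}(z,x)$, I would apply Theorem \ref{th.triangle_ineq} to the equality
\begin{equation*}
F_{\Omega}(z,y)+F_{\Omega}(y,x)=F_{\Omega}(z,x),
\end{equation*}
whose associated ``forward'' triple in the ordering $(z,y,x)$ coincides exactly with the backward triple relative to $(x,y,z)$.

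The main obstacle is supplying this auxiliary reverse-ordered equality from the hypothesis. Because $F_{\Omega}$ is not symmetric, it does not follow from a simple relabeling of Theorem \ref{th.triangle_ineq}. The natural route, consistent with the framing of Section \ref{s:H}, is to recognize that the joint alignment condition of the two triples is precisely the equality case of the triangle inequality for the symmetrization $H_{\Omega}=\tfrac{1}{2}(F_{\Omega}+{}^rF_{\Omega})$: equality in the $H$-triangle inequality forces equality in both the $F$- and the ${}^rF$-triangle inequalities simultaneously, and each of the two resulting Funk-type equalities then delivers one of the two aligned triples via Theorem \ref{th.triangle_ineq}. Once this splitting is in place, the proof reduces to a double invocation of Theorem \ref{th.triangle_ineq}, with no new convex-geometric input beyond Lemma \ref{lem.pointfacealigned}.
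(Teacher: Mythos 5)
Your proposal is correct and follows exactly the route the paper intends: the theorem sits in the Hilbert-geometry section and the displayed equality should read $H_{\Omega}$ rather than $F_{\Omega}$ (a typo in the statement), whereupon $H_{\Omega}(x,y)+H_{\Omega}(y,z)=H_{\Omega}(x,z)$ splits, because both constituent triangle inequalities go the same way, into the two Funk equalities $F_{\Omega}(x,z)=F_{\Omega}(x,y)+F_{\Omega}(y,z)$ and $F_{\Omega}(z,x)=F_{\Omega}(z,y)+F_{\Omega}(y,x)$, each of which is equivalent to the alignment of one of the two triples by Theorem \ref{th.triangle_ineq}. You correctly diagnosed that under the literal $F_{\Omega}$ reading the ``only if'' direction would fail (the second triple's alignment is neither used nor implied), and your repair via the symmetrization is precisely the deduction the paper leaves unwritten.
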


From Theorem \ref{th.FunkGeodesics}, we obtain:

\begin{theorem}
Let $\gamma : [0,1]\to  \Omega$ be a path in a sharp convex domain of  $\r^n$. 
Then $\gamma$ is a geodesic    for the Hilbert metric in $\Omega$ if and only if there exist two
faces $D^-,D^+ \subset \tilde{\partial} \Omega$ such that 
for any $t_1 < t_2$ in $[0,1]$ we have $\gamma(t_2)-\gamma(t_1) \in  C_{\gamma(t_1) }(D^+)$ and 
$\gamma(t_1)-\gamma(t_2) \in  C_{\gamma(t_2) }(D^-)$.
\end{theorem}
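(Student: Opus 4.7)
The plan is to reduce the Hilbert case to the Funk case already treated in Theorem~\ref{th.FunkGeodesics}, via the symmetrization formula
\[
  H_\Omega(x,y) = \tfrac{1}{2}\bigl(F_\Omega(x,y)+F_\Omega(y,x)\bigr).
\]
By Lemma~\ref{lem.critgeodesic} applied to $H_\Omega$, the path $\gamma$ is a Hilbert geodesic if and only if
\[
  H_\Omega(\gamma(t_1),\gamma(t_3)) = H_\Omega(\gamma(t_1),\gamma(t_2))+H_\Omega(\gamma(t_2),\gamma(t_3))
\]
for every $t_1\leq t_2\leq t_3$ in $[0,1]$. Substituting the symmetrization and grouping the forward and backward Funk terms, this identity becomes
\begin{align*}
  0 &= \bigl[F_\Omega(\gamma(t_1),\gamma(t_3))-F_\Omega(\gamma(t_1),\gamma(t_2))-F_\Omega(\gamma(t_2),\gamma(t_3))\bigr] \\
    &\quad + \bigl[F_\Omega(\gamma(t_3),\gamma(t_1))-F_\Omega(\gamma(t_3),\gamma(t_2))-F_\Omega(\gamma(t_2),\gamma(t_1))\bigr].
\end{align*}

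The decisive observation is that, by the Funk triangle inequality (Theorem~\ref{th.triangle_ineq}), each of the two bracketed quantities is nonpositive; a sum of two nonpositive numbers vanishes precisely when each summand does. Consequently, Hilbert additivity along $\gamma$ is equivalent to the simultaneous validity of Funk additivity along $\gamma$ and along its reverse parametrization $\tilde\gamma(t):=\gamma(1-t)$. Equivalently, by Lemma~\ref{lem.critgeodesic} applied to $F_\Omega$, both $\gamma$ and $\tilde\gamma$ must be Funk geodesics.

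Next I would apply Theorem~\ref{th.FunkGeodesics} twice. Applied to $\gamma$ it produces a face $D^+\subset\tilde\partial\Omega$ with $\gamma(t_2)-\gamma(t_1)\in C_{\gamma(t_1)}(D^+)$ whenever $t_1<t_2$. Applied to $\tilde\gamma$ it produces a face $D^-\subset\tilde\partial\Omega$ with $\tilde\gamma(s_2)-\tilde\gamma(s_1)\in C_{\tilde\gamma(s_1)}(D^-)$ whenever $s_1<s_2$; the substitution $s_i=1-t_i$ (so that $s_1<s_2$ corresponds to $t_2<t_1$) rewrites this as $\gamma(t_1)-\gamma(t_2)\in C_{\gamma(t_2)}(D^-)$ for $t_1<t_2$, matching the statement. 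The converse runs backwards, using Theorem~\ref{th.FunkGeodesics} to convert each cone condition into the corresponding Funk additivity identity and then summing them to recover Hilbert additivity.

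I foresee no genuine obstacle: the whole argument hinges on the elementary fact that a sum of two nonpositive Funk triangle-defect terms vanishes only when both vanish, and the remainder is an appeal to previously established results together with a straightforward change of parameter to translate the condition on $\tilde\gamma$ back into a statement about $\gamma$. The sharpness of $\Omega$ is used only implicitly, to ensure that $H_\Omega$ is a genuine metric and that the notion of geodesic is unambiguous.
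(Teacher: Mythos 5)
Your proof is correct and follows essentially the route the paper intends: the paper derives this theorem directly from Theorem \ref{th.FunkGeodesics} via the symmetrization $H_\Omega=\tfrac12(F_\Omega+{}^rF_\Omega)$, and your observation that the Hilbert triangle defect splits into two nonpositive Funk defects which must vanish separately is exactly the step being left implicit. The bookkeeping for the reversed parametrization is also handled correctly.
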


Recall that $C_p(D)$ is the cone at $p$ on the face $D$, see Equation (\ref{eq.defcone}).
If $\gamma$ is smooth, then it is geodesic if and only $\dot \gamma (t) \in  C_{\gamma(t)}(D^+)$ 
and $-\dot \gamma (t) \in  C_{\gamma(t)}(D^-)$ for any $t \in [0,1]$. 
We then have the following characterization of smooth geodesics in Hilbert geometry
which we formulate only for bounded domain for convenience:

\begin{corollary}
Let $\gamma : [0,1]\to  \Omega$ be a path  of class $C^1$ in a bounded convex domain of  $\r^n$. 
Then $\gamma$ is a geodesic    for the Hilbert metric in $\Omega$ if and only if there exist two
faces $D^-,D^+ \subset \tilde{\partial} \Omega$ such that for any $t$, the tangent line to the curve $\gamma$
at $t$ meets the boundary $\partial \Omega$ on $D^+ \cup D^-$.
\end{corollary}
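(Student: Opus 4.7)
The plan is to read this corollary off from the smooth-curve characterization stated in the paragraph just before it: a $C^1$ path $\gamma$ is a Hilbert geodesic if and only if there exist faces $D^+, D^- \subset \tilde{\partial}\Omega$ with $\dot\gamma(t) \in C_{\gamma(t)}(D^+)$ and $-\dot\gamma(t) \in C_{\gamma(t)}(D^-)$ for every $t \in [0,1]$. Since $\Omega$ is assumed bounded, $\tilde{\partial}\Omega = \partial\Omega$ and no point at infinity comes into play, so both faces automatically lie in $\partial\Omega$.

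The ``only if'' direction then reduces to unwinding the definition (\ref{eq.defcone}) of the cone $C_p(D)$: a nonzero vector $v$ lies in $C_p(D)$ exactly when the extended ray $\overline{R}(p, p+v)$ meets $D$. Taking $p = \gamma(t)$ and $v = \dot\gamma(t)$, this says the forward half of the tangent line at $\gamma(t)$ hits $\partial\Omega$ inside $D^+$; taking $v = -\dot\gamma(t)$, the backward half hits $\partial\Omega$ inside $D^-$. The full tangent line at $t$ is the union of these two half-rays based at $\gamma(t)$, so it meets $\partial\Omega$ on $D^+\cup D^-$, as required.

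For the converse, assume the tangent line at every $t$ meets $\partial\Omega$ on $D^+\cup D^-$. Because $\Omega$ is bounded and convex, each tangent line meets $\partial\Omega$ in exactly two points; I would fix the convention that the forward half-tangent hits $D^+$ and the backward half-tangent hits $D^-$ (swapping $D^+$ and $D^-$ globally if necessary), which immediately delivers the two cone conditions and hence the Hilbert-geodesic property via the preceding theorem. The one point requiring care is showing that this labeling is globally consistent along $\gamma$: this follows from continuity of $t \mapsto \dot\gamma(t)$ together with the remark that a continuous swap between two fixed disjoint faces $D^+, D^-$ would force, at the swap moment, the two boundary hits to coincide, and at such a point the tangent line lies entirely in a support hyperplane and both labels coincide anyway. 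Hence the corollary is essentially a direct geometric translation of the cone condition for $C^1$ curves.
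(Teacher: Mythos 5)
Your proposal is correct and matches the paper's (implicit) argument: the corollary is read off directly from the smooth-curve characterization $\dot\gamma(t)\in C_{\gamma(t)}(D^+)$, $-\dot\gamma(t)\in C_{\gamma(t)}(D^-)$ stated just before it, the paper itself offering no further proof. Your extra care about the global consistency of the labels $D^+$, $D^-$ in the converse direction is a reasonable (and correct) filling-in of a detail the paper leaves tacit.
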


\begin{corollary}[compare  \cite{Harpe1993}]
 Assume that $\Omega$ is strictly convex, or more generally that all but possibly one of its proper faces are reduced to points. 
 Then the Hilbert geometry in $\Omega$ is uniquely geodesic.
\end{corollary}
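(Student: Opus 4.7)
The plan is to read off uniqueness from the characterization of Hilbert geodesics given in the preceding theorem. Let $\gamma : [0,1] \to \Omega$ be any Hilbert geodesic with $\gamma(0)=x$ and $\gamma(1)=y$. That theorem supplies proper faces $D^+, D^- \subset \tilde\partial\Omega$ governing the forward and backward chord directions of $\gamma$. Specializing to $t_1=0$, $t_2=1$ already gives $a_\Omega(x,y) \in D^+$ and $a_\Omega(y,x) \in D^-$. Under the hypothesis, the only proper face of $\overline{\Omega}$ which is not a single point is some fixed face $E$ (if it exists at all); every other proper face is an extreme point.

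The first key step is to rule out $D^+ = D^- = E$. If both equalities held, then $a_\Omega(x,y)$ and $a_\Omega(y,x)$ would both lie in $E$, and since $E$ is a face (hence convex), the entire segment joining them would lie in $E \subset \partial\Omega$. But this segment contains $x$ and $y$, contradicting $x,y \in \Omega$. Therefore, without loss of generality, $D^-$ reduces to a singleton $\{p\}$.

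With $D^- = \{p\}$, I would apply the backward cone condition to the pairs $(t,1)$ for every $t \in [0,1]$: the ray from $y=\gamma(1)$ through $\gamma(t)$ must meet $\{p\}$, so $a_\Omega(y,\gamma(t)) = p$ and hence $\gamma(t)$ lies on the segment $(y,p)$. Consequently the image of $\gamma$ is contained in the affine line $\ell$ through $y$ and $p$, which is the same as the line through $x$ and $y$.

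It remains to show that a Hilbert geodesic confined to $\ell \cap \Omega$ and joining $x$ to $y$ must be a monotone reparametrization of $[x,y]$. For this I would use projectivity of $H_\Omega$ (Proposition \ref{prop.Hbasicproperties}(c)): for any $z \in \ell \cap \Omega$ with $z \notin [x,y]$, one of $x,y$ lies strictly between $z$ and the other, so by projectivity and the triangle inequality one obtains $H_\Omega(x,z)+H_\Omega(z,y) = H_\Omega(x,y) + 2 H_\Omega(x,z)$ or a symmetric identity, which is strictly larger than $H_\Omega(x,y)$. Subdividing a putative non-monotone path by its extremal value on $\ell$ and applying this inequality contradicts the geodesic identity. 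The main obstacle I anticipate is precisely this last step: keeping the one-dimensional detour argument clean when $\Omega$ is only assumed bounded-face-wise rather than strictly convex, and verifying that no degeneracy from the non-point face $E$ interferes with the strict inequality on $\ell$.
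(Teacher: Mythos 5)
Your argument is correct and follows exactly the route the paper intends: the paper offers no written proof, presenting the statement as an immediate consequence of the two-face characterization of Hilbert geodesics, and your proof simply makes that deduction explicit (at most one of $D^{+},D^{-}$ can be the non-point face, the singleton face pins the image to the line $xy$, and projectivity plus the separation property of $H_{\Omega}$ forces a monotone parametrization of $[x,y]$). The only point worth keeping an eye on is the unbounded case, where the faces live in $\tilde{\partial}\Omega$ and may contain points at infinity, but the paper itself glosses over this and your argument goes through verbatim for bounded (in particular strictly convex bounded) domains.
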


\medskip

\section{Related questions}\label{other}
 
In this section we briefly discuss some recent developments related to the idea of the Funk distance.
 
 Yamada  recently introduced what he called the \emph{Weil-Petersson-Funk metric}\index{Weil-Petersson-Funk metric}\index{metric!Weil-Petersson-Funk} on Teichm\"uller space (see \cite{Yamada2014}, and see \cite{OMY} in this volume). The definition is  analogous to one of the definitions of the Funk metric, using in an essential way the non-completeness of the Weil-Petersson metric on Teichm\"uller space. One can wonder whether there are Funk-like metrics associated to other interesting known symmetric metrics. This geometry bears some analogies with Thurston's metric on Teichm\"uller space and the Funk metric. One can ask for a study of the Thurston metric which parallels the study of the Funk metric (that is, study its balls, its convexity properties, orthogonality and projections, etc.). 
 
It should also be of interest to study the geometric properties of the \emph{reverse} Funk metric,\index{reverse Funk metric}\index{metric!reverse Funk} that is, the metric ${}^rF_\Omega$ on an open convex set $\Omega$ defined by ${}^rF_\Omega(x,y)=F_\Omega(y,x)$.  
Let us recall that 
 the reverse Funk metric is not equivalent to the Funk metric in any reasonable sense, see Remark \ref{rem.Fcomplete}.      This is also related to the fact that the forward and backward open balls at some point can be very different, as we already noticed.
We note in this respect that the \emph{reverse} metrics of the Thurston weak metric and of the Weil-Petersson-Funk weak metric that we mentioned are also very poorly understood.
 
\smallskip
 
 Finally, there is another symmetrization of the Funk metric, besides the Hilbert metric, namely, its \emph{max-symmetrization},\index{metric!max-symmetrization}\index{max-symmetrization!metric} defined as 
 $$S(x,y)=\max \{(F(x,y),F(y,x)\},$$
  and it should be interesting to study its propertes. Note that the max-symmetrization of the Thurston metric is an important  metric on Teichm\"uller space, known as the \emph{length spectrum metric}\index{length spectrum metric}\index{metric!length spectrum}. 

 \appendix
\section{Menelaus' Theorem}\index{Theorem!Menelaus}

An elementary proof of the triangle inequality for the Funk metric is given by  Zaustinsky  in \cite{Zaustinsky}, and it is recalled in the next appendix. This proof is based on 
the  classical Menelaus' Theorem.\footnote{This theorem, in the Euclidean and in the spherical case, is quoted by Ptolemy (2nd c. A.D) and it is due to Menelaus (second c. A.D.) Its proof is contained in 
Menelaus' \emph{Spherics}. No Greek version of Menelaus'  \emph{Spherics} survived, but there are Arabic versions; cf. the forthcoming English edition \cite{RP} from the Arabic original of al-Haraw\=\i \ (10th century).} 
For the convenience of the reader, we give  a statement and a proof of this result in the present appendix.

 \medskip
 
To state Menelaus' Theorem, we recall   the notion of \emph{division ratio} of three aligned points.
Consider three points $A,B,P$ in $\r^n$ with  $A\neq B$. Then $P$ belongs to the line through $A$ and $B$
if and only if   $P = t B + (1-t)A$ for some uniquely defined $t\in \r$. 
The number $t$ is called the \emph{division ratio}\index{division ratio} or the \emph{affine ratio}\index{affine ratio} of 
$P$ relative to $B$ and $A$. We denote it by
$t = \frac{AP}{AB}$. The division ratio is invariant under any affine transformation.
Note that if both $A\neq B$ and $A \neq P$, then 
$$
   \frac{AP}{AB} = t \quad  \Longleftrightarrow \quad  \frac{PB}{PA} = \frac{t-1}{t}.
$$
For instance $P$ is the midpoint of $[B,A]$ if and only if $\frac{AP}{AB}  = \frac{1}{2}$ or, equivalently, $\frac{PB}{PA} =-1$. 
Note that the sign is an important component of the division ratio, and in fact we have
$$
    \frac{AP}{AB}  = \pm \frac{|P-A|}{|B-A|} 
$$
with a minus sign if and only if $A$ lies between $B$ and $P$.

\medskip

\begin{proposition}[Menelaus' Theorem] \label{prop:M} 
Let $ABC$ be a non-degenerate Euclidean triangle and let $A',B',C'$ be three arbitrary   points on the lines 
containing the sides $BC, AC, AB$.   Assume that $A' \neq C$, $B' \neq A$ and 
$C' \neq B$.
Then, the points  $A',B',C'$ are aligned if and only if we have
\[
 \frac{A'B}{A'C} \cdot \frac{B'C}{B'A}  \cdot \frac{C'A}{C'B}   = +1.
\]
\end{proposition}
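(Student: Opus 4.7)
The plan is to reduce both directions of the equivalence to a one-dimensional computation via orthogonal projection onto the transversal line. For the forward direction, I would suppose $A',B',C'$ lie on a common line $\ell$ and let $\alpha,\beta,\gamma\in\mathbb{R}$ be the signed perpendicular distances from $A,B,C$ to $\ell$, the sign recording which of the two open half-planes of $\ell$ each vertex lies in. The defining identity $B-A'=s(C-A')$ with $s=\frac{A'B}{A'C}$, when projected onto the normal direction to $\ell$, becomes $\beta=s\gamma$, since $A'\in\ell$ contributes nothing. This gives $\frac{A'B}{A'C}=\frac{\beta}{\gamma}$, and two analogous perpendicular projections yield $\frac{B'C}{B'A}=\frac{\gamma}{\alpha}$ and $\frac{C'A}{C'B}=\frac{\alpha}{\beta}$. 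The three signed ratios multiply to the telescoping product $\frac{\beta}{\gamma}\cdot\frac{\gamma}{\alpha}\cdot\frac{\alpha}{\beta}=+1$.

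The main point of care, and in my view the principal obstacle, is to verify that this argument is legitimate: one must ensure that the three perpendicular distances $\alpha,\beta,\gamma$ are all nonzero, so that the signed-ratio formulas are well-defined. If $\alpha=0$ then $A\in\ell$; combined with $B'\in\ell\cap(\text{line }AC)$ this forces either $B'=A$, which is excluded by hypothesis, or else the entire line $AC$ is contained in $\ell$, which in turn forces $C\in\ell$ and hence $C=A'$, also excluded. Symmetric arguments dispose of the cases $\beta=0$ and $\gamma=0$. One also has to observe that the clean cancellation above does not depend on any case analysis for how $A',B',C'$ are distributed inside or outside the triangle sides, because the usual sign ambiguities of Menelaus are automatically absorbed into the signs of $\alpha,\beta,\gamma$.

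For the converse I would argue by uniqueness. Suppose the product equals $+1$ and let $\ell'$ denote the line through $B'$ and $C'$; let $A''$ be the point where $\ell'$ meets the line through $B$ and $C$, taken as a point of $\mathbb{RP}^n$ in case the two lines are parallel (in which case $\frac{A''B}{A''C}=1$ by the paper's convention for division ratios with one vertex at infinity). Applying the already-proven forward implication to the collinear triple $A'',B',C'$ gives $\frac{A''B}{A''C}\cdot\frac{B'C}{B'A}\cdot\frac{C'A}{C'B}=1$, and comparing with the hypothesis yields $\frac{A''B}{A''C}=\frac{A'B}{A'C}$. Since the map $P\mapsto\frac{PB}{PC}$ is a bijection from $(\text{line }BC)\setminus\{C\}\subset\mathbb{RP}^n$ onto $\mathbb{R}$, this forces $A''=A'$, so $A'$ lies on $\ell'$ and the three points $A',B',C'$ are aligned.
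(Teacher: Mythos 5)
Your proof is correct, and it takes a genuinely different route from the paper's. The paper argues affinely: it places $A,B,C$ as linearly independent vectors (origin off their plane), writes $A',B',C'$ as affine combinations of the vertices with parameters $\lambda,\mu,\nu$ tied to the division ratios, and characterizes collinearity of $A',B',C'$ by the existence of $\rho$ with $C'=\rho A'+(1-\rho)B'$; comparing coefficients then yields both directions of the equivalence in one computation. You instead use the Euclidean structure: the identity $\frac{A'B}{A'C}=\beta/\gamma$ obtained by projecting onto the normal of the transversal is the classical metric proof of the forward direction, and your telescoping product plus the careful check that $\alpha,\beta,\gamma\neq 0$ (via the exclusions $A'\neq C$, $B'\neq A$, $C'\neq B$) is clean and complete; the converse by uniqueness of the point with a prescribed division ratio on line $BC$ is the standard companion argument and is also sound. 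What the paper's approach buys is that it is purely affine (Menelaus is an affine-invariant statement, and the division ratio needs no metric), and it treats both implications simultaneously; what yours buys is a shorter, more transparent forward direction with no coordinates and an automatic handling of the sign bookkeeping. The one step you should make explicit is the application of the forward implication to the triple $A'',B',C'$ when $\ell'$ is parallel to $BC$: the proven statement concerns finite points, so you need the one-line verification that in this case the signed distances of $B$ and $C$ to $\ell'$ coincide, whence $\frac{B'C}{B'A}\cdot\frac{C'A}{C'B}=\frac{\gamma}{\alpha}\cdot\frac{\alpha}{\beta}=1$, consistent with the convention $\frac{A''B}{A''C}=1$ (and in fact this case is then excluded by the hypothesis, since it would force $\frac{A'B}{A'C}=1$, i.e.\ $B=C$). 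With that sentence added, the argument is complete.
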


\begin{tikzpicture}[line cap=round,line join=round,>=triangle 45,x=0.9cm,y=0.9cm]
\clip(0.65,-1.32) rectangle (9.72,5.95);
\draw [domain=0.65:9.72] plot(\x,{(--14.91-2.4*\x)/2.8});
\draw [domain=0.65:9.72] plot(\x,{(-3.35--1.96*\x)/3.21});
\draw [domain=0.65:9.72] plot(\x,{(--24.73-0.44*\x)/6});
\draw [dash pattern=on 2pt off 2pt,domain=0.65:9.72] plot(\x,{(-4.99--1.82*\x)/0.36});
\begin{scriptsize}
\fill [color=black] (1.54,4.01) circle (1.5pt);
\draw[color=black] (1.66,4.2) node {$A$};
\fill [color=black] (4.34,1.61) circle (1.5pt);
\draw[color=black] (4.44,1.81) node {$B$};
\fill [color=black] (7.55,3.57) circle (1.5pt);
\draw[color=black] (7.66,3.77) node {$C$};
\fill [color=black] (2.89,0.72) circle (1.5pt);
\draw[color=black] (3.07,0.67) node {$A'$};
\fill [color=black] (3.25,2.54) circle (1.5pt);
\draw[color=black] (3.55,2.6) node {$C'$};

\fill [color=black] (3.52,3.86) circle (1.5pt);
\draw[color=black] (3.76,4.1) node {$B'$};
\end{scriptsize}
\end{tikzpicture}

\begin{proof} 
Although a purely geometric proof is possible, it is somewhat delicate to correctly handle the signs of the division
ratios throughout the arguments. We follow below a more algebraic approach. 
It will be convenient to assume that $A,B$ and $C$ are points in
$\r^n$ with $n \geq 3$ and to assume that the origin $0\in \r^n$ does not belong to the plane $\pi$ containing
the three points $A,B,C$.  By hypothesis, the point $C'$ lies on the line through $A$ and $B$, therefore
$$
 C' = \nu A + (1-\nu) B, \quad \text{with}  \quad  \frac{C'A}{C'B} = \frac{\nu-1}{\nu}.
$$   
Likewise, we have $A' = \lambda B + (1-\lambda) C$ and $ B' = \mu C + (1-\mu) A$ with  $\frac{A'B}{A'C} = \frac{\lambda-1}{\lambda}$ and   
$\frac{B'C}{B'A} = \frac{\mu-1}{\mu}$.
 Now the point $C'$ lies on the line through $A'$ and $B'$ if and only if there exists $\rho\in \r$ such that 
$C' = \rho A' + (1-\rho)B'$. We then have 
 \begin{align*}
 C' &= \rho (\lambda B + (1-\lambda) C)  +  (1-\rho) ( \mu C + (1-\mu) A) 
 \\  &= (1-\rho)(1-\mu)  A  + \rho \lambda B + (\rho (1-\lambda) + \mu(1-\rho) ) C 
 \\   &=  \nu A + (1-\nu) B.
\end{align*}
 Our hypothesis implies that $A,B,C$ correspond to three linearly independent vectors in $\r^n$, therefore
 the latter identity implies
 $$
  \nu =  (1-\rho)(1-\mu), \quad (1-\nu) = \rho \lambda  \quad  \text{and }   (\rho (1-\lambda) + \mu(1-\rho)) = 0.
 $$ 
 Thus,
  $$
   \frac{(1-\mu)(1-\nu)}{\lambda \nu} =   \frac{\rho}{1-\rho } = -\frac{\mu}{1-\lambda }
 $$
 and we conclude that $C'$ is aligned with $A'$ and $B'$ if and only if
 $$
   \frac{A'B}{A'C} \cdot \frac{B'C}{B'A}  \cdot \frac{C'A}{C'B}  = \frac{(\lambda-1)(\mu-1)(\nu-1)}{\lambda\mu \nu} = 1.
 $$
\end{proof}

\textbf{Remark.}  Using similar arguments, we can also prove Ceva's Theorem. Both theorems are dual to each other. 
Let us recall the statement:\index{Theorem!Ceva}

\begin{proposition}[Ceva's  Theorem] \label{prop:Ceva} 
Let $ABC$ be a non-degenerate Euclidean triangle and let $A',B',C'$ be three arbitrary   points on the lines 
containing the sides $BC, AC, AB$ such  that $A' \neq C$, $B' \neq A$ and 
$C' \neq B$.
Then, the lines $AA'$, $BB'$ and $CC'$ are concurrent or parallel
 if and only if we have
\[
 \frac{A'B}{A'C} \cdot \frac{B'C}{B'A}  \cdot \frac{C'A}{C'B}   = -1.
\]
\end{proposition}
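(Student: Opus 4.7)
The plan is to mirror the algebraic strategy used for Menelaus' Theorem. As in that proof, embed the triangle into $\r^n$ with $n\geq 3$ chosen so that the origin does not lie in the plane through $A,B,C$; then $A,B,C$ are linearly independent vectors and every point in their affine span is uniquely of the form $\alpha A+\beta B +\gamma C$ with $\alpha+\beta+\gamma = 1$ (barycentric coordinates). Parametrize the three points on the sides exactly as before: $A' = \lambda B + (1-\lambda)C$, $B' = \mu C + (1-\mu)A$, and $C' = \nu A + (1-\nu) B$, so that
$$
\frac{A'B}{A'C}\cdot \frac{B'C}{B'A}\cdot \frac{C'A}{C'B} = \frac{(\lambda-1)(\mu-1)(\nu-1)}{\lambda\mu\nu}.
$$

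Next I would translate concurrency into a linear system. A point $P = \alpha A+\beta B+\gamma C$ belongs to the line $AA'$ exactly when its $B$- and $C$-coefficients are proportional to $\lambda$ and $1-\lambda$, that is, $(1-\lambda)\beta - \lambda\gamma = 0$. Similarly, $P$ lies on $BB'$ iff $\mu\alpha - (1-\mu)\gamma = 0$ and on $CC'$ iff $(1-\nu)\alpha - \nu\beta = 0$. The three lines have a common point (finite or at infinity) iff this homogeneous $3\times 3$ system admits a nonzero solution $(\alpha,\beta,\gamma)$, that is, iff the determinant
$$
\begin{vmatrix} 0 & 1-\lambda & -\lambda \\ \mu & 0 & -(1-\mu) \\ 1-\nu & -\nu & 0 \end{vmatrix}
= \lambda\mu\nu - (1-\lambda)(1-\mu)(1-\nu)
$$
vanishes. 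Rearranging, this is precisely the equation
$$
\frac{(\lambda-1)(\mu-1)(\nu-1)}{\lambda\mu\nu} = -1,
$$
which is Ceva's identity.

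The main obstacle is the \emph{parallel} case, which must be bundled into the same determinantal condition as concurrency. This is handled by observing that a solution $(\alpha,\beta,\gamma)$ of the homogeneous system determines an affine point when $\alpha+\beta+\gamma \neq 0$ (after rescaling to normalize the sum to $1$), whereas a solution with $\alpha+\beta+\gamma = 0$ corresponds to a common direction vector, meaning the three lines share a point at infinity and are therefore mutually parallel. Thus, in both the concurrent and the parallel case, the criterion is the vanishing of the same determinant, which yields Ceva's identity; conversely, if the identity holds, the system is rank $\leq 2$ and so admits a nonzero solution, producing either a finite common point or a common direction. A short verification that the degenerate situations excluded by the hypotheses ($A'\neq C$, $B'\neq A$, $C'\neq B$, i.e.\ $\lambda,\mu,\nu \neq 0$) indeed make the ratios and the equations well defined completes the argument.
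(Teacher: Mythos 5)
Your proof is correct, and it improves on the paper's in a meaningful way. Both arguments start from the same setup (barycentric coordinates with $A'=\lambda B+(1-\lambda)C$, $B'=\mu C+(1-\mu)A$, $C'=\nu A+(1-\nu)B$, and the ratios $\frac{(\lambda-1)(\mu-1)(\nu-1)}{\lambda\mu\nu}$), but the mechanisms diverge: the paper parametrizes a putative common point $P$ along each cevian as $P=tA+(1-t)A'=sB+(1-s)B'=rC+(1-r)C'$, equates barycentric coefficients, and manipulates the resulting relations to extract the product $-1$ --- and then explicitly leaves the parallel case and the converse to the reader. You instead encode each cevian as a homogeneous linear equation in $(\alpha,\beta,\gamma)$ and reduce everything to the vanishing of a single $3\times 3$ determinant, $\lambda\mu\nu-(1-\lambda)(1-\mu)(1-\nu)$. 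This buys you the full biconditional in one stroke, and it absorbs the parallel case cleanly via the standard dichotomy $\alpha+\beta+\gamma\neq 0$ (affine common point) versus $\alpha+\beta+\gamma=0$ (common direction, hence mutually parallel lines). One detail worth making explicit if you write this up: the hypotheses force $\lambda,\mu,\nu\neq 0$, which guarantees the coefficient matrix always has rank at least $2$ (the first row vanishes in the first column while the second does not, so no two rows can be proportional unless a row is zero, which is impossible); hence the solution space is at most one-dimensional and the concurrent and parallel alternatives cannot mix. With that observation your argument is complete and strictly more informative than the one printed in the appendix.
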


\hspace{3cm} 
\begin{tikzpicture}[line cap=round,line join=round,>=triangle 45,x=0.9cm,y=0.9cm]
\clip(2.8,-0.9) rectangle (9.16,4.92);
\draw [domain=2.8:9.16] plot(\x,{(--12.48-2.57*\x)/0.82});
\draw [domain=2.8:9.16] plot(\x,{(--4.01--0.22*\x)/3.09});
\draw [domain=2.8:9.16] plot(\x,{(--24.63-2.35*\x)/3.91});
\draw [dash pattern=on 1pt off 1pt,domain=2.8:9.16] plot(\x,{(--4.59-1.29*\x)/-2.75});
\draw [dash pattern=on 1pt off 1pt,domain=2.8:9.16] plot(\x,{(--10.44-2.61*\x)/0.3});
\draw [dash pattern=on 1pt off 1pt,domain=2.8:9.16] plot(\x,{(--5.51-1.41*\x)/-0.36});
\begin{scriptsize}
\fill [color=black] (3.52,4.18) circle (1.5pt);
\draw[color=black] (3.75,4.3) node {$A$};
\fill [color=black] (4.34,1.61) circle (1.5pt);
\draw[color=black] (4.5,1.85) node {$B$};
\fill [color=black] (7.43,1.83) circle (1.5pt);
\draw[color=black] (7.5,2.1) node {$C$};
\fill [color=black] (3.82,1.57) circle (1.5pt);
\draw[color=black] (3.55,1.4) node {$A'$};
\fill [color=black] (4.68,0.52) circle (1.5pt);
\draw[color=black] (5.0,0.52) node {$C'$};
\fill [color=black] (3.98,0.2) circle (1.5pt);
\draw[color=black] (3.75,0.221) node {$P$};

\fill [color=black] (4.81,3.41) circle (1.5pt);
\draw[color=black] (5.1,3.53) node {$B'$};
\end{scriptsize}
\end{tikzpicture}
 
\begin{proof}
Let us give the main step of the  proof of Ceva's theorem. Suppose that  the lines $AA'$, $BB'$ and $CC'$ 
meet at a point $P$. Then we can find $r,s$ and $t$ in $\r$ such that
$$
 P = tA + (1-t)A' = sB + (1-s)B' = rC + (1-r)C'.
$$
We also have as before $A' = \lambda B + (1-\lambda) C$,  $B' = \mu C + (1-\mu) A$ 
and $C' = \nu A + (1-\nu) B$. This implies
\begin{eqnarray*}
P & = & tA + (1-t)\lambda B + (1-t)(1-\lambda) C 
 \\ & = & (1-s)(1-\mu) A + sB + (1-s)\mu C
 \\ & = & (1-r) \nu A +(1-r)(1-\nu)B + r C.
\end{eqnarray*}
By uniqueness of the barycentric coordinates with respect to the triangle $ABC$, we have 
\begin{eqnarray*}
t  & = &  (1-s)(1- \mu) =  (1-r) \nu 
 \\ 
 s & = & (1-t)\lambda = (1-r)(1-\nu)
 \\ 
 r & = &  (1-t)(1-\lambda) = (1-s) \mu.
\end{eqnarray*}
Therefore, we have
 $$
   \frac{A'B}{A'C} \cdot \frac{B'C}{B'A}  \cdot \frac{C'A}{C'B}  = \frac{(\lambda-1)(\mu-1)(\nu-1)}{\lambda\mu \nu} 
   = - \frac{r}{s} \cdot \frac{t}{r}\cdot \frac{s}{t}
   = -1.
 $$
 We leave it to the reader to discuss the case where the lines $AA'$, $BB'$ and $CC'$  are parallel and to
 prove the converse direction.
  \end{proof}

\section{The classical proof of the triangle inequality for the Funk metric}

The triangle inequality is proved in Section \ref{sec.triangle}. Note that it also easily follows from Corollary  \ref{cor.dFh2}
(see also  \cite{Yamada2014}), 
and it is also a consequence of the Finslerian description of the Funk metric (see Chapter 3 \cite{TroyanovFinslerian} in this volume).

In this appendix, we present the classical proof of the triangle following Zaustinsky \cite{Zaustinsky}.
This proof is similar to the original proof of the triangle inequality for the Hilbert distance, as given by D. Hilbert in \cite{Hilbert2},
although the proof in the case of the Hilbert distance is a bit simpler and does not use the Menelaus theorem.

We now prove  the triangle inequality for the Funk metric following  \cite[p. 85]{Zaustinsky}.  
Let $x,y,z$ be three points in $\Omega$. In view of Property (b) in Proposition \ref{prop.basicproperties}, we  may assume that they are not collinear.
Let $a,b,c,d,e,f$ be the intersections with $\partial \Omega$ of the lines $xz$, $yx$ and $zy$, using the notation 
of the figure concerning the order of intersections.

\begin{center}
 \definecolor{yqyqyq}{rgb}{0.5,0.5,0.5}
\begin{tikzpicture}[x=0.7cm,y=0.7cm]
\clip(-3.62,-4.69) rectangle (5.04,7.97);
\fill[color=yqyqyq,fill=yqyqyq,fill opacity=0.1] (0,-1.26) -- (2.1,-0.47) -- (0.66,0.67) -- cycle;
  \draw plot  [smooth cycle]  coordinates  {(-1.22,2.16)   (1.34,2.46)   (4.24,0.34)   (3.66,-1.7)   (-0.76,-3.5)     (-2.45,-2.19)}; 
\draw (-0.76,-3.5)-- (1.27,2.45);
\draw (-1.22,2.16)-- (3.66,-1.7);
\draw (-2.45,-2.19)-- (4.24,0.34);
\draw [dash pattern=on 2pt off 2pt] (-1.66,7.53)-- (3.66,-1.7);
\draw [dash pattern=on 2pt off 2pt] (-1.66,7.53)-- (1.17,-0.82);
\draw [dash pattern=on 2pt off 2pt] (-1.66,7.53)-- (-0.76,-3.5);
\draw (0,-1.26)-- (2.1,-0.47);
\draw (2.1,-0.47)-- (0.66,0.67);
\draw  (0.66,0.67)-- (0,-1.26);
\begin{scriptsize}
\fill   (-0.76,-3.5) circle (1.5pt);
\draw  (-0.63,-3.8) node {$b$};
\fill   (-1.22,2.16) circle (1.5pt);
\draw  (-1.5,2.31) node {$d$};
\fill  (1.27,2.45) circle (1.5pt);
\draw (1.38,2.71) node {$a$};
\fill   (3.66,-1.7) circle (1.5pt);
\draw  (3.82,-1.81) node {$c$};
\fill   (-1.66,7.53) circle (1.5pt);
\draw  (-1.52,7.79) node {$p$};
\fill   (0.66,0.67) circle (1.5pt);
\draw  (0.36,0.6) node {$y$};
\fill  (-2.45,-2.19) circle (1.5pt);
\draw (-2.66,-2.2) node {$f$};
\fill   (4.24,0.34) circle (1.5pt);
\draw  (4.41,0.52) node {$e$};
\fill   (1.17,-0.82) circle (1.5pt);
\draw  (1.32,-1.1) node {$y'$};
\fill   (0,-1.26) circle (1.5pt);
\draw  (0.16,-1.4) node {$x$};
\fill  (2.1,-0.47) circle (1.5pt);
\draw  (2.05,-0.7) node {$z$};
\fill  (2.8,-0.2) circle (1.5pt);
\draw  (3,0.1) node {$a'$};
\fill  (-0.91,-1.61) circle (1.5pt);
\draw  (-1.16,-1.5) node {$b'$};
\end{scriptsize}
\end{tikzpicture}
\end{center}  
\vspace{-0.3cm}
From the invariance of the cross ratio from the perspective at $p$, we have 
\[
\frac{ \vert  x-a \vert }{  \vert y-a\vert } \cdot \frac{  \vert b-y\vert }{  \vert b-x\vert } =
\frac{ \vert  x-a'\vert }{  \vert y' -a'\vert } \cdot \frac{  \vert b'-y' \vert }{\vert b'-x\vert }
\]
and 
\[
 \frac{ \vert  y-c\vert }{ \vert  z-c\vert } \cdot \frac{  \vert d-z\vert }{\vert d-y\vert } = 
 \frac{\vert   y' -a'\vert }{  \vert z-a'\vert } \cdot \frac{  \vert b'-z\vert }{ \vert  b'-y' \vert }.
\]
 
Multiplying both sides of these two equations, we get
$$
\frac{ \vert  x-a \vert }{  \vert y-a\vert }\cdot \frac{ \vert  y-c\vert }{ \vert  z-c\vert } = 
\frac{ \vert  x-a'\vert }{  \vert z -a'\vert } \cdot \frac{  \vert b'-z \vert }{\vert b'-x\vert } \cdot
\frac{  \vert b-x\vert } {  \vert b-y\vert } \cdot \frac{\vert d-y\vert } {\vert d-z\vert }.
$$
The three points $b,b'$ and $d$ lie on the sides of the triangle $xyz$ and are aligned,
therefore we have by Menelaus' theorem (Theorem \ref{prop:M}):
$$
\frac{  \vert b'-z \vert }{\vert b'-x\vert } \cdot
\frac{  \vert b-x\vert } {  \vert b-y\vert } \cdot \frac{\vert d-y\vert } {\vert d-z\vert }
= 1
$$
\[
\frac{  \vert d-x\vert }{ \vert  d-y\vert } \cdot \frac{ \vert  f-y\vert }{\vert   f-z\vert } =\frac{\vert   a'-x\vert }{ \vert  a'-z\vert }.
\]
This gives
\[\frac{ \vert  x-a\vert }{ \vert  a-c\vert } \cdot \frac{ \vert  y-c\vert }{  \vert z-c\vert } =\frac{  \vert x-a'\vert }{  \vert z-a'\vert }\geq \frac{ \vert  x-a\vert }{ \vert  z-a\vert },\]
and the inequality is strict unless $a=a'$. This inequality is equivalent to the triangle inequality for the Funk metric\footnote{Observe that the argument shows that   the inequality is strict for all $x,y,z$ unless $\partial \Omega$ contains a Euclidean segment; compare with Theorem
\ref{th.triangle_ineq}.}.
\qed


\end{document}